\newtheorem{definition}{Def\text{}inition}[section]
\newtheorem{theorem}[definition]{Theorem}
\newtheorem{example}[definition]{Example}
\newtheorem{lemma}[definition]{Lemma}
\newtheorem{proposition}[definition]{Proposition}
\newtheorem{corollary}[definition]{Corollary}
\newtheorem{remark}[definition]{Remark}
\newtheorem{question}[definition]{Question}
\begin{document}

\title{ \bf \large On spaces with star kernel Menger\footnote{The first author was supported by Postdoctoral Fellowship Program at UNAM.}}
\author{ \small JAVIER CASAS-DE LA ROSA, \'ANGEL TAMARIZ MASCAR\'UA}
\date{}
\maketitle

\begin{abstract}
Given a topological property $\mathcal{P}$, a space $X$ is called \emph{star-}$\mathcal{P}$ if for any open cover $\mathcal{U}$ of the space $X$, there exists a set $Y\subseteq X$ with property $\mathcal{P}$ such that $St(Y,\mathcal{U})=X$; the set $Y$ is called a star kernel of the cover $\mathcal{U}$. In this paper, we introduce and study spaces with star kernel Menger, that is, star Menger spaces. Some examples are given to show the relationship with some other related properties studied previously, and the behaviour of the star Menger property with respect to subspaces, products, continuous images and preimages are investigated. Additionally, some comments on the star selection theory are given. Particularly, some questions posed by Song within this theory are addressed. Finally, several new properties are introduced as well as some general questions on them are posed.
 \end{abstract}

\noindent\emph{Key words.} star-$\mathcal{P}$ spaces, Menger property, star Lindel\"of spaces, (star) selection principles, star Menger spaces, Hurewicz property.

\noindent\emph{2020 AMS Subject Classification}. Primary 54D20; Secondary 54A25.\\

\section{Introduction and preliminaries} \label{intro}

Given a property $\mathcal{P}$, a space $X$ is called \emph{star-}$\mathcal{P}$ (or {\it star determined by} $\mathcal{P}$) if for any open cover $\mathcal{U}$ of the space $X$, there is a set $Y\subseteq X$ with the property $\mathcal{P}$ such that $St(Y,\mathcal{U})=X$; the set $Y$ is called star kernel of the cover $\mathcal{U}$. 

The term star-$\mathcal{P}$ was coined in \cite{Mill} but certain star properties were studied before (under different names) by several authors. For example, Ikenaga (\cite{Ikenaga})  studied  star countable, star Lindel\"{o}f and star $\sigma$-compact spaces. On the other hand, the star finite and star countable properties were first studied by van Douwen et.al. (see \cite{DRRT}). A systematic study of some star properties (with different terminology) can be found in \cite{Matveev}.

Fleischman proved in \cite{Fleischman} that the properties of being star finite and countably compact are equivalent in the class of the Hausdorff spaces (see also \cite{DRRT}). On the other hand, Matveev proved in \cite{Matveev}, that pseudocompactness and star pseudocompactness are equivalent. In view of these facts, a special issue of interest for different authors consists in identifying proper classes of pseudocompact spaces that are star-$\mathcal{P}$, for some property $\mathcal{P}$, but are not countably compact (see for example \cite{CMR0}). In that sense, in \cite{Mill} the authors present some examples of spaces that are star-$\mathcal{P}$ for certain compact-like property but are not countably compact. Besides, in \cite{AJW}, the authors present an study and examples of star-$\mathcal{P}$ spaces for Lindel\"{o}f-like properties. Moreover, following the study of star covering properties, many other properties of the type star-$\mathcal{P}$ have been defined using the terminology of \cite{Mill}. For example, in \cite{RT}, the authors introduced and studied the classes of star countable spread spaces and star determined by the countable chain condition as well as, in \cite{RT2}, the class of star countable extent spaces.\\

In order to contribute to this line of investigation, in Section \ref{section on star Menger spaces}, we introduce the class of star Menger spaces and give some examples showing the relationship with other classes of the type star-$\mathcal{P}$. In Section \ref{section on star selection principles theory}, we make some comments on relationships of the star selection principles theory with the star Menger property; a couple of interesting examples are mentioned of which one of them, allows us to address some questions posed by Song that are related to star selection properties. In Section \ref{section on the behaviour of the star Menger property}, we study the behaviour of the star Menger property with respect to subspaces, products, mappings, preimages, etc. In Section \ref{section on equivalences on some classes}, we mention several classes of spaces where the star Menger property is equivalent to some other related properties. Finally, in Section \ref{section on further study and general problems}, we introduce some schemes that provide (potentially) several new properties similar to the one studied in this work; and some general problems about them are posed.

\subsection{Notation and terminology}

Throughout this paper, all spaces are assumed to be regular and $T_1$, unless a specific separation axiom is indicated. For notation and terminology, we refer to \cite{E}.\\
\noindent As usual, $\mathbb{R}$ is the set of the real numbers endowed with its Euclidean topology. We denote by $[X]^{<\omega}$ the collection of all finite subsets of $X$. Given a set $A\subseteq X$ and a family $\mathcal{U}$ of subsets of $X$, the star of $A$ with respect to $\mathcal{U}$, denoted by $St(A,\mathcal{U})$, is the set $\bigcup\{U\in\mathcal{U}:U\cap A\neq\emptyset\}$; for $A=\{x\}$ with $x\in X$, we write $St(x,\mathcal{U})$ instead of $St(\{x\},\mathcal{U})$. Thus, if $A$ and $B$ are subsets of $X$ such that $A\subseteq B$ and $\mathcal{U}$ is a family of subsets of $X$, then $St(A,\mathcal{U})\subseteq St(B,\mathcal{U})$. Also, note that every space which satisfies a property $\mathcal{P}$ is star-$\mathcal{P}$. Moreover, the following basic facts are satisfied by star-$\mathcal{P}$ spaces:
  \begin{enumerate}
    \item[(a)] For any space $X$, if $D$ is a dense subset of $X$ which satisfies a property $\mathcal{P}$, then $X$ is a star-$\mathcal{P}$ space.
    \item[(b)] If $\mathcal{P}$ and $\mathcal{Q}$ are two topological properties such that $\mathcal{P}$ implies $\mathcal{Q}$, then any star-$\mathcal{P}$ space is star-$\mathcal{Q}$.
  \end{enumerate}

A space $X$ is called {\it feebly compact} if every locally finite collection of non-empty open subsets of $X$ is finite, and a space $X$ is called {\it feebly Lindel\"{o}f} if every locally finite family of non-empty open sets in $X$ is countable. Obviously, every feebly compact is feebly Lindel\"{o}f and every feebly compact space is pseudocompact but the converse is not true. However, for Tychonoff spaces the properties of being feebly compact and pseudocompact are equivalent.

For a space $X$, recall that the {\it Alexandroff duplicate of the space} $X$, denoted by $A(X)$, is constructed in the following way: the underlying set of $A(X)$ is $X\times\{0,1\}$ and each point of $X\times\{1\}$ is isolated; a basic open set of a point $(x,0)\in X\times\{0\}$ is a set of the form $(U\times\{0\})\cup((U\times\{1\})\backslash\{(x,1)\})$, where $U$ is an open set in $X$ which contains $x$. It is well-know that $A(X)$ is compact (countably compact, Lindel\"{o}f) if and only if so is $X$, and $A(X)$ is Hausdorff (regular, Tychonoff, normal) if and only if so is $X$.

Recall that a family $\mathcal{A}$ of infinite subsets of $\omega$ is \emph{almost-disjoint} (a.d.) if for any two distinct elements $A,B\in\mathcal{A}$ it holds that $A\cap B$ is finite. Furthermore, an almost-disjoint family is maximal (m.a.d.) if it is not properly contained in another almost-disjoint family. Given an almost-disjoint family $\mathcal{A}$, we define the {\it Mr\'owka-Isbell space determined by} $\mathcal{A}$ (also known as the $\Psi${\it-space generated by} $\mathcal{A}$), which is denoted by $\Psi(\mathcal{A})$, as the space whose underlying set is $\omega\cup\mathcal{A}$ with the following topology: every point in $\omega$ is isolated and, for a given element $A\in\mathcal{A}$, a local base for $A$ is the set $\mathcal{B}(A)=\{\{A\}\cup(A\backslash F):F\in[\omega]^{<\omega}\}$. It is well-known that an almost disjoint family $\mathcal{A}$ is maximal if and only if $\Psi(\mathcal{A})$ is a pseudocompact (see \cite{PM}).

$\sf{CH}$ denotes the Continuum Hypothesis and we refer the reader to \cite{Hodel} to see the definitions of cardinal functions such as spread, extent, cellularity, etc. Recall that the Pixley-Roy topology is given for spaces of subsets of a space $X$ and it is defined as follows: Let $\mathcal{K}$ be a class of subsets of a space $X$. For $K\in\mathcal{K}$ and $U$ an open neighbourhood of $K$ in $X$, let $[K,U]=\{A\in\mathcal{K}: K\subseteq A\subseteq U\}$. Then, the Pixley-Roy topology on $\mathcal{K}$ is obtained by taking the collection $\{[K,U]: K\in\mathcal{K}, U \text{ open in } X\}$ as a base (see \cite{vD}).

\section{Star Menger spaces}\label{section on star Menger spaces}

The following definition was coined in \cite{Mill} by van Mill et.al.

\begin{definition}\label{star-P space}
  Given a topological property $\mathcal{P}$, a space $X$ is called \emph{star-}$\mathcal{P}$ (or {\it star determined by} $\mathcal{P}$) if for any open cover $\mathcal{U}$ of the space $X$, there exists a subspace $Y\subseteq X$ with the property $\mathcal{P}$ such that $St(Y,\mathcal{U})=X$.
\end{definition}

By considering Definition \ref{star-P space}, many authors have studied several classes of star-$\mathcal{P}$ spaces for $\mathcal{P}$ being a compact-like property as well as classes of star-$\mathcal{P}$ spaces for $\mathcal{P}$ being a Lindel\"{o}f-type property (see, for example, \cite{Mill} and \cite{AJW}). Recall that a space is Menger if for every sequence $\{\mathcal{U}_n:n\in\omega\}$ of open covers of $X$, there exists a sequence $\{\mathcal{V}_n:n\in\omega\}$ such that for each $n\in\omega$, $\mathcal{V}_n$ is a finite subcollection of $\mathcal{U}_n$ and $\bigcup\{\mathcal{V}_n:n\in\omega\}$ is an open cover of $X$. Let us introduce one more star-$\mathcal{P}$ class with $\mathcal{P}$ being the Menger property.

\begin{definition}\label{star Menger space}
  We say that a space $X$ is star Menger if for any open cover $\mathcal{U}$ of the space $X$, there exists a Menger subspace $M\subseteq X$ such that $St(M,\mathcal{U})=X$.
\end{definition}

In the following diagram (Figure \ref{relationships among star-P properties}) we summarize implications among star-$\mathcal{P}$ properties (and some other well-known classical properties) that are obtained, either immediately from some basic facts, or by some result from literature. This diagram give us an overview on relationships among these properties with the star Menger property. It is worth to mention that in the class of Tychonoff spaces, pseudocompactness and feebly compactness are equivalent. Hence, for Tychonoff spaces, pseudocompactness trivially implies feebly Lindel\"{o}f.

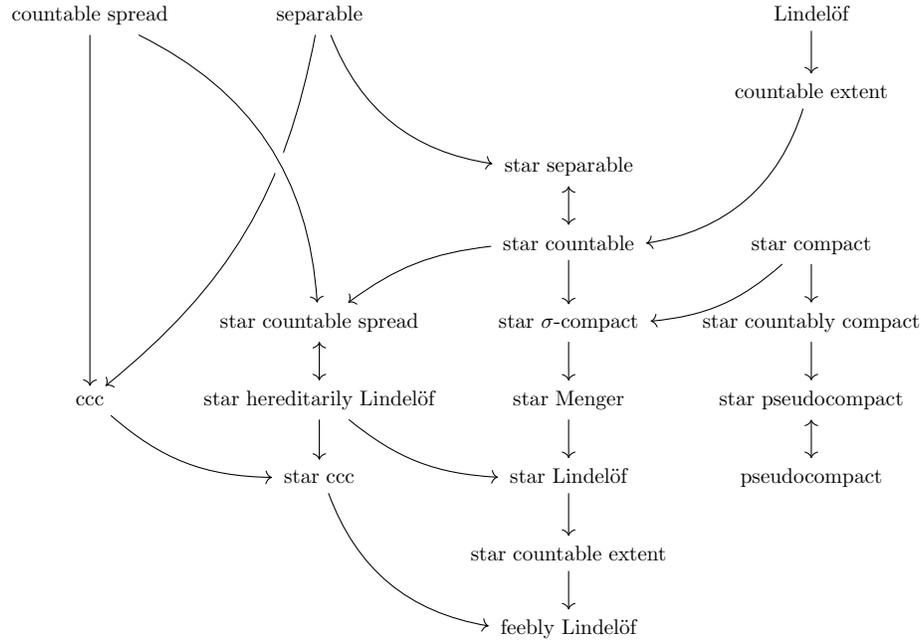
\begin{figure}[h!]
\begin{adjustbox}{center}
\begin{tikzcd}[row sep=1.5em, column sep=.5em]
\scalebox{0.8}{countable spread} \arrow[ddddd] & \scalebox{0.8}{separable} \arrow[dddddl, bend left=20] \arrow[ddr, bend right] & & \scalebox{0.8}{Lindel\"{o}f} \arrow[d] \\ % LINEA 1
& & & \scalebox{0.8}{countable extent} \arrow[ddl, bend left=32] \\ % LINEA 2
& & \scalebox{0.8}{star separable} \arrow[d,leftrightarrow] & \\ % LINEA 3
& & \scalebox{0.8}{star countable} \arrow[dl, bend right=15] \arrow[d] & \scalebox{0.8}{star compact} \arrow[dl, bend left=18] \arrow[d] \\ % LINEA 4
& \scalebox{0.8}{star countable spread} \arrow[d,leftrightarrow] \arrow[uuuul, crossing over, bend right, <-] & \scalebox{0.8}{star $\sigma$-compact} \arrow[d] & \scalebox{0.8}{star countably compact} \arrow[d] \\ % LINEA 5
\scalebox{0.8}{ccc} \arrow[dr, bend right=20] & \scalebox{0.8}{star hereditarily Lindel\"{o}f} \arrow[d] \arrow[dr, bend right=18] & \scalebox{0.8}{star Menger} \arrow[d] & \scalebox{0.8}{star pseudocompact} \arrow[d,leftrightarrow] \\ % LINEA 6
& \scalebox{0.8}{star ccc} \arrow[ddr, bend right=30] & \scalebox{0.8}{star Lindel\"{o}f} \arrow[d] & \scalebox{0.8}{pseudocompact} \\ % LINEA 7
& & \scalebox{0.8}{star countable extent} \arrow[d] & \\ % LINEA 8
& & \scalebox{0.8}{feebly Lindel\"{o}f} & \\ % LINEA 9
\end{tikzcd}%\par
\end{adjustbox}
\caption{Relationships among star-$\mathcal{P}$ properties}
\label{relationships among star-P properties}
\end{figure}

In the following, we will give some examples showing that the star Menger property is different from other star-$\mathcal{P}$ properties defined previously. For that end, we recall the definition of a Luzin space.

\begin{definition}
  We say that a Hausdorff space $X$ is Luzin if it is uncountable and every nowhere dense set of $X$ is countable.
\end{definition}

The following lemma is well-known and will be useful for the example below.

\begin{lemma}[\cite{Hodel}]\label{consequence of celularity}
Let $c(X)\leq\kappa$ and $\mathcal{U}$ be a collection of open subsets of $X$. Then there exists a subcollection $\mathcal{V}$ of $\mathcal{U}$ such that $|\mathcal{V}|\leq\kappa$ and $\bigcup\mathcal{U}\subseteq\overline{\bigcup\mathcal{V}}$.
\end{lemma}

\begin{example}\label{SL not SM}
  Assuming $\sf{CH}$, there exists a star Lindel\"{o}f space which is not star Menger.
\end{example}
\begin{proof}
Given a topological space $(X,\tau)$, we denote by $\mathcal{K}[X]$ the space of all compact nowhere dense subsets of $X$ endowed with the Pixley-Roy topology. In Theorem 3 of \cite{DTW}, it was showed that $\mathcal{K}[\mathbb{R}]$ is a ccc non-separable first countable zero-dimensional Baire space without isolated points. It is easy to see that a similar proof of these conditions for $\mathcal{K}[\mathbb{R}]$ also works for $\mathcal{K}[\mathbb{P}]$, where $\mathbb{P}\subseteq\mathbb{R}$ is the subspace of irrational numbers. Therefore, we can apply Corollary of Theorem 1 \cite{DTW}, to get, under $\sf{CH}$, a dense Luzin subspace $L$ of $\mathcal{K}[\mathbb{P}]$.\\

\noindent \emph{Claim:} The subspace $L$ is Lindel\"{o}f.\\
\noindent Since $\mathcal{K}[\mathbb{P}]$ is a ccc space and $L$ is dense, then $L$ is ccc. Thus, by Lemma \ref{consequence of celularity}, if $\mathcal{U}$ is an open cover of $L$, then there exists a countable subcollection $\mathcal{V}_1$ of $\mathcal{U}$ such that $\bigcup\mathcal{U}\subseteq \overline{\bigcup\mathcal{V}_1}$. It follows that the set $L\setminus\bigcup\mathcal{V}_1$ is nowhere dense in $L$ and therefore, it is countable. Thus, we have a countable subcollection $\mathcal{V}_2$ of $U$ such that $L\setminus\bigcup\mathcal{V}_1\subseteq\bigcup\mathcal{V}_2$. Let $\mathcal{V}=\mathcal{V}_1\cup\mathcal{V}_2$. Then, $\mathcal{V}$ is countable subcover. We conclude that $L$ is Lindel\"{o}f.\\

From this claim, it follows that $\mathcal{K}[\mathbb{P}]$ is star Lindel\"{o}f. Let us show that $\mathcal{K}[\mathbb{P}]$ is not star Menger. We consider the open cover $\mathcal{U}=\{[\{p\} , \mathbb{P}]:p\in\mathbb{P}\}$. We will show that if $\mathcal{M}$ is a Menger subspace of $\mathcal{K}[\mathbb{P}]$, then $St(\mathcal{M},\mathcal{U})\neq\mathcal{K}[\mathbb{P}]$. For that end, let us first show the following fact:\\

\noindent\emph{Fact:} If $\mathcal{M}\subseteq\mathcal{K}[\mathbb{P}]$ is Menger, then $M=\bigcup_{K\in\mathcal{M}}K$ is a Menger subspace of $\mathbb{P}$.\\
Let $(\mathcal{W}_n)_{n\in\omega}$ be a sequence of open covers of $M$. Since for every $K\in\mathcal{M}$, $K\subseteq M$ is compact in $\mathbb{P}$, then for each $n\in\omega$, there exists a finite subcollection $\mathcal{V}_n^K$ of $\mathcal{W}_n$ such that $K\subseteq\bigcup\mathcal{V}_n^K$. Thus, for each $n\in\omega$, $\mathcal{U}_n=\{[K, \bigcup\mathcal{V}_n^K]:K\in\mathcal{M}\}$ is an open cover of $\mathcal{M}$. Now, using the Menger property of $\mathcal{M}$, there exists, for each $n\in \omega$, a finite subcollection $\mathcal{F}_n$ of $\mathcal{U}_n$ such that $\{\bigcup\mathcal{F}_n:n\in\omega\}$ is an open cover of $\mathcal{M}$. We define, for each $n\in\omega$, $\mathcal{K}_n$ as follows: $K\in\mathcal{K}_n$ if and only if $[K, \bigcup\mathcal{V}_n^K]\in\mathcal{F}_n$. Then, we let, for each $n\in\omega$, $\mathcal{H}_n=\{V\in\mathcal{V}_n^K:K\in \mathcal{K}_n\}$. Thus, $\mathcal{H}_n$ is a finite subcollection of $\mathcal{W}_n$ $(n\in\omega)$. Let us show that the collection $\{\bigcup\mathcal{H}_n:n\in\omega\}$ is an open cover of $M$. Let $x\in M$. There exists $K_0\in\mathcal{M}$ such that $x\in K_0$. Then, for such $K_0$, there is $n_0\in\omega$ such that $K_0\in\bigcup\mathcal{F}_{n_0}$. We take $[K, \bigcup\mathcal{V}_{n_0}^K]\in\mathcal{F}_{n_0}$ such that $K_0\in[K, \bigcup\mathcal{V}_{n_0}^K]$. Hence, $K\in\mathcal{K}_{n_0}$ and then $\mathcal{V}_{n_0}^K\subseteq\mathcal{H}_{n_0}$. It follows that $x\in K_{n_0}\subseteq \bigcup\mathcal{V}_{n_0}^K\subseteq\bigcup\mathcal{H}_{n_0}$. This proves that $\{\bigcup\mathcal{H}_n:n\in\omega\}$ is an open cover of $M$. Thus, $M$ is Menger.\\

Now, suppose $\mathcal{M}$ is a Menger subspace in $\mathcal{K}[\mathbb{P}]$. By the previous fact, $M=\bigcup\mathcal{M}$ is Menger in $\mathbb{P}$. It is well know that $\mathbb{P}$ is not Menger. Hence, there is a point $x\in\mathbb{P}\setminus M$. Then $\{x\}\in\mathcal{K}[\mathbb{P}]\setminus St(\mathcal{M}, \mathcal{U})$ since $[\{x\},\mathbb{P}]$ is the only element of $\mathcal{U}$ that contains $\{x\}$ and $[\{x\},\mathbb{P}]\cap\mathcal{M}=\emptyset$ (if there is $C\in[\{x\},\mathbb{P}]\cap\mathcal{M}$, then $\{x\}\subseteq C\subseteq\bigcup\mathcal{M}$ that implies $x\in M$, contradiction).
\end{proof}

It is well known and easy to proof that a space $X$ is star countable if and only if $X$ is star separable. Arguing similarly and using a \v{S}apirovskii theorem saying that every space with spread number at most $\kappa$ has a dense subspace with hereditarily Lindel\"{o}f number at most $\kappa$, the following interesting result was obtained in \cite{RT}.

\begin{theorem}[\cite{RT}]\label{scs equivalent shl}
A space $X$ is star countable spread if and only if $X$ is star hereditarily Lindel\"{o}f.
\end{theorem}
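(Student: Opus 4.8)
The plan is to run the same argument that establishes the (folklore) equivalence of \emph{star countable} and \emph{star separable}, with ``countable $\Rightarrow$ separable, plus passing to a countable dense subset'' replaced by ``hereditarily Lindel\"{o}f $\Rightarrow$ countable spread, plus passing to the dense subspace provided by \v{S}apirovskii's theorem.'' Concretely, the \v{S}apirovskii theorem quoted just above the statement (every space of spread $\leq\kappa$ has a dense subspace of hereditarily Lindel\"{o}f number $\leq\kappa$) will be applied with $\kappa=\omega$.

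For the implication ``star hereditarily Lindel\"{o}f $\Rightarrow$ star countable spread,'' I would first note that a hereditarily Lindel\"{o}f space has countable spread: any discrete subspace $D$ of a hereditarily Lindel\"{o}f space $Y$ is Lindel\"{o}f and discrete, hence countable, so $s(Y)\leq hL(Y)\leq\omega$. Thus the property ``hereditarily Lindel\"{o}f'' implies the property ``countable spread,'' and the conclusion is then immediate from basic fact (b) in the preliminaries.

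For the converse, let $X$ be star countable spread and let $\mathcal{U}$ be an arbitrary open cover of $X$. Pick $Y\subseteq X$ with $s(Y)\leq\omega$ and $St(Y,\mathcal{U})=X$. By \v{S}apirovskii's theorem, $Y$ has a dense subspace $D$ with $hL(D)\leq\omega$, i.e.\ $D$ is hereditarily Lindel\"{o}f. It remains to verify that $D$ is still a star kernel of $\mathcal{U}$, and this is exactly where density is used: if $U\in\mathcal{U}$ meets $Y$, then $U\cap Y$ is a nonempty relatively open subset of $Y$, so $U\cap D\neq\emptyset$ by density of $D$ in $Y$; conversely $U\cap D\neq\emptyset$ trivially gives $U\cap Y\neq\emptyset$. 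Hence $\{U\in\mathcal{U}:U\cap D\neq\emptyset\}=\{U\in\mathcal{U}:U\cap Y\neq\emptyset\}$, so $St(D,\mathcal{U})=St(Y,\mathcal{U})=X$. Since $\mathcal{U}$ was arbitrary, $X$ is star hereditarily Lindel\"{o}f.

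There is essentially no obstacle here beyond invoking the correct form of \v{S}apirovskii's theorem; the only point needing a word of care is that shrinking the star kernel $Y$ to its dense subspace $D$ does not shrink the star, and that follows at once from density as above. Everything else is bookkeeping about cardinal functions ($s\leq hL$) and the definition of $St(\cdot,\mathcal{U})$.
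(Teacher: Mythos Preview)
Your proof is correct and follows exactly the approach the paper indicates: the paper does not give a full proof of this theorem (it is quoted from \cite{RT}), but the sentence preceding the statement outlines precisely your argument---mimic the proof that star countable equals star separable, replacing the passage to a countable dense subset by \v{S}apirovskii's theorem that any space of countable spread contains a dense hereditarily Lindel\"{o}f subspace. Your handling of both directions, including the observation that passing from $Y$ to its dense subset $D$ does not shrink the star, matches this sketch.
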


Using Theorem \ref{scs equivalent shl}, an example of a Hausdorff star countable spread that is not star countable was given in \cite{RT}. Additionally, under $\sf{CH}$, a Tychonoff star countable spread non-star countable space was also given in it. Since every star countable is star $\sigma$-compact and every star $\sigma$-compact is star Menger, we can ask the following question requesting for a stronger example.

\begin{question}
Is there an example of a star countable spread space that is not star $\sigma$-compact (star Menger)?
\end{question}

We answer this question by pointing out the following remark.

\begin{remark}\label{L is hL}
  The subspace $L$ of Example \ref{SL not SM} is hereditarily Lindel\"{o}f
\end{remark}
\begin{proof}
Let $Z$ be an uncountable subset of $L$. Note that if $A$ is a nowhere dense subset of $Z$, then $A$ is a nowhere dense subset of $L$. Therefore, any nowhere dense subset of $Z$ is countable. It means $Z$ is Luzin.\\

\noindent\emph{Claim:} Any discrete subset of $Z$ of non-isolated points is nowhere dense.\\
Indeed, let $D$ be a discrete subset of non-isolated points and assume that $U$ a non-empty open set contained in $\overline{D}$. Let $d\in U\cap D$ and take $V$ an open neighbourhood of $d$ such that $V\cap D=\{d\}$. Let $W=U\cap V$. Then $W$ is an open neighbourhood of $d$. Since $d$ is not an isolated point, there is a point $y\in W\setminus\{d\}$. Note that $y$ is an element of $\overline{D}$ and $W\setminus\{d\}$ is an open neighbourhood of it. Therefore, there is a point $d'\in D\setminus\{d\}$ such that $d'\in V$, contradiction. Hence, $D$ is nowhere dense.

Note that $L$ has at most countably many isolated points as it is ccc. It follows that $Z$ has no uncountable discrete subsets. In other words, the spread number of $Z$ is $\omega$, and thus $Z$ has the ccc property. To conclude that $Z$ is Lindel\"{o}f, it is enough to argue as in the proof of the claim of Example \ref{SL not SM} where it was showed that $L$ is Lindel\"{o}f. This proves that $L$ is hereditarily Lindel\"{o}f.
\end{proof}

\begin{example}\label{scs not star Menger}
  Assuming $\sf{CH}$, there exists a Tychonoff star countable spread space that is not star Menger (therefore, not star $\sigma$-compact either).
\end{example}
\begin{proof}
By using Remark \ref{L is hL}, the space $\mathcal{K}[\mathbb{P}]$ is star hereditarily Lindel\"{o}f. Thus, $\mathcal{K}[\mathbb{P}]$ is star countable spread by Theorem \ref{scs equivalent shl}. Since $\mathcal{K}[\mathbb{P}]$ is not star Menger, $\mathcal{K}[\mathbb{P}]$ is not star $\sigma$-compact either.
\end{proof}

Note that as a consequence of Example \ref{scs not star Menger}, we also have that, assuming $\sf{CH}$, there exists a Tychonoff star ccc space that is not star Menger (therefore, not star $\sigma$-compact either). However, we mention that in \cite{RT}, without assuming additional axioms, it was given an example of a Tychonoff star ccc space that is not star countable extent, by using the Pixley-Roy topology. Therefore, we also obtain the following.

\begin{example}\label{star ccc not star Menger}
  There exists a Tychonoff star ccc space that is not star Menger.
\end{example}
\begin{proof}
    Let $X=[\mathbb{R}]^{<\omega}\setminus\{\emptyset\}$ be endowed with the Pixley-Roy topology. Then, $X$ is a ccc Moore space and hence, it is star ccc. Additionally, $X$ is not star Menger as it is not a star countable extent space (see \cite{RT} or \cite{RT2}). 
\end{proof}

On the other hand, using the space given in Proposition 2.6 of \cite{RT}, we also obtain the following example. For the sake of completeness, we mention such a space and we refer the reader to \cite{RT} for details.

\begin{example}\label{star Menger not star ccc}
  There exists a Tychonoff star Menger space that is not star ccc.
\end{example}
\begin{proof}
    Let $\mathcal{A}$ be a mad family on $\omega$ of size $\mathfrak{c}$ and $X=\Psi(\mathcal{A})$. Let $Y=D(\mathfrak{c})\cup\{\infty\}$ be the one-point compactification of the discrete space of size $\mathfrak{c}$. Then, the product space $X\times Y$ is a star $\sigma$-compact (therefore, star Menger) space that is not star ccc (see \cite{RT}). 
\end{proof}

Again, as a consequence of Example \ref{star Menger not star ccc}, we also have the following.

\begin{example}\label{star Menger not star countable spread}
  There exists a Tychonoff star Menger space that is not star countable spread.
\end{example}

We recall that a space $X$ is a $P$-space if all $G_\delta$-sets in $X$ are open. A well-known fact is that a $P$-space is Lindel\"{of} if and only if it is Menger (see \cite{W}). In addition, another well-known fact that is easy to proof is that the product of a Menger space and a compact space is Menger (see \cite{T}).

\begin{example}\label{star Menger not star sigma-compact}
  There exists a Tychonoff star Menger space that is not star $\sigma$-compact.
\end{example}
\begin{proof}
Let $X=[0,\omega_2]$ with the order topology, $Y=D(\omega_1)\cup\{\infty\}$ be the one-point Lindel\"{o}fication of the discrete space $D(\omega_1)=\{d_\alpha:\alpha\in\omega_1\}$ of size $\omega_1$ and $Z=(X\times Y)\setminus\{\langle\omega_2,\infty\rangle\}$. 

Let us show that the space $Z$ is star Menger. Let $\mathcal{U}$ be an open cover of $Z$. For each $\alpha\in\omega_1$, fix $U_\alpha\in\mathcal{U}$ such that $\langle\omega_2, d_\alpha\rangle\in U_\alpha$. Thus, for each $\alpha\in\omega_1$, there exists $\beta(\alpha)\in\omega_2$ such that $(\beta(\alpha), \omega_2]\times\{d_\alpha\}\subseteq U_\alpha$. We define $\beta=sup\{\beta(\alpha):\alpha\in\omega_1\}\in\omega_2$. Then, the subspace $[0, \beta+1]\times Y$ is Menger as $Y$ is a $P$-space. Furthermore, note that $[\beta+1, \omega_2)\times\{\infty\}$ is countably compact and therefore, it is star finite. Thus, for the open cover $\mathcal{U}$, there exists a finite subset $F$ of $[\beta+1, \omega_2)\times\{\infty\}$ such that $[\beta+1, \omega_2)\times\{\infty\}\subseteq St(F,\mathcal{U})$. Let $M=([0, \beta+1]\times Y)\cup F$. Then, $M$ is a Menger subspace of $Z$ so that $St(M,\mathcal{U})=Z$. We conclude that $Z$ is a star Menger space. 

It remains to show that $Z$ is not star $\sigma$-compact. For each $\alpha\in\omega_1$, let $U_\alpha=X\times\{d_\alpha\}$. We define $\mathcal{U}=\{U_\alpha:\alpha\in\omega_1\}\cup\{[0,\omega_2)\times Y\}$. It is clear that $\mathcal{U}$ is an open cover of $Z$. Now, note that if $K_n$ is a compact subspace of $Z$, then $\pi_Y[K_n]$ is compact in $Y$ and thus, a finite subset of $Y$. Hence, if $K$ is a $\sigma$-subspace of $Z$, let say $K=\bigcup_{n\in\omega}K_n$ with each $K_n$ compact, then for each $n\in\omega$, we can fix $\beta_n\in\omega_1$ such that $K_n\cap (X\times\{d_\alpha:\alpha\geq\beta_n\})=\emptyset$. If $\beta=sup\{\beta_n:n\in\omega\}$, then $K\cap(X\times\{d_\alpha:\alpha\geq\beta\})=\emptyset$. Hence, if $\alpha>\beta$, then $\langle\omega_2, d_\alpha\rangle\notin St(K,\mathcal{U})$ as $U_\alpha$ is the only element of $\mathcal{U}$ that contains to $\langle\omega_2, d_\alpha\rangle$ and $U_\alpha\cap K=(X\times\{d_\alpha\})\cap K=\emptyset$. This proves that $Z$ is not star $\sigma$-compact. 
\end{proof}

In \cite{Song0}, the author gave an example of a pseudocompact Tychonoff space that is not star Lindel\"{o}f. We note that this space is even star countably compact. Thus, we obtain the following example.

\begin{example}\label{star cc not star Menger}
  There exists a Tychonoff star countably compact space that is not star Menger.
\end{example}
\begin{proof}
    Let $D=\{d_\alpha:\alpha<\mathfrak{c}\}$ be the discrete space of size $\mathfrak{c}$ and let $$X=(\beta(D)\times (\mathfrak{c}+1))\setminus((\beta(D)\setminus D)\times \{\mathfrak{c}\})$$ be the subspace of $\beta(D)\times (\mathfrak{c}+1)$. Since $\beta(D)\times \mathfrak{c}$ is countably compact and it is dense in $X$, it follows that $X$ is star countably compact. On the other hand, $X$ is not star Menger as it is not even a star Lindel\"{o}f space (see \cite{Song0})
\end{proof}

On the other hand, to give an example of a star Menger that is not star countably compact, it suffices to take any Lindel\"{o}f non-pseudocompact space. In addition, it is also possible to give another example of this kind (star Menger not star countably compact)  but in this case, being pseudocompact not Lindel\"{o}f.

\begin{example}\label{star Menger not star cc}
  There exists a Tychonoff star Menger space that is not star countably compact.
\end{example}
\begin{proof}
    Let $\mathcal{A}$ be a mad family on $\omega$ of size $\mathfrak{c}$ and $X=\Psi(\mathcal{A})$. Then the space $X$ is a pseudocompact space that is not Lindel\"{o}f. Furthermore, since $X$ is separable, $X$ is star Menger. Finally, the space $X$ is not star countably compact (see \cite{CMR0}).
\end{proof}

We finish this section establishing a diagram  (Figure \ref{Negative implications involving the star Menger property}) showing negative implications between the star Menger property and some other star-$\mathcal{P}$ properties obtained as a consequence from examples given in this section; for easy reference in the diagram, we summarize such examples in the following list:

\begin{enumerate}
    \item \label{first} Example \ref{SL not SM} is a star Lindel\"{o}f space that is not star Menger.
    \item \label{second} Example \ref{scs not star Menger} is a star countable spread space that is not star Menger.
    \item \label{third} Example \ref{star ccc not star Menger} is a star ccc space that is not star Menger.
    \item \label{fourth} Example \ref{star Menger not star ccc} is a star Menger space that is not star ccc.
    \item \label{fifth} Example \ref{star Menger not star countable spread} is a star Menger space that is not star countable spread.
    \item \label{sixth} Example \ref{star Menger not star sigma-compact} is a star Menger space that is not star $\sigma$-compact.
    \item \label{seventh} Example \ref{star cc not star Menger} is a star countably compact space that is not star Menger.
    \item \label{eighth} Example \ref{star Menger not star cc} is a star Menger space that is not star countably compact.
\end{enumerate}

\begin{figure}[h!]
\begin{adjustbox}{center}
\begin{tikzcd}[row sep=1.5em, column sep=.5em]
\scalebox{0.8}{countable spread} \arrow[ddddd] & \scalebox{0.8}{separable} \arrow[dddddl, bend left=20] \arrow[ddr, bend right] & & \scalebox{0.8}{Lindel\"{o}f} \arrow[d] \\ % LINEA 1
& & & \scalebox{0.8}{countable extent} \arrow[ddl, bend left=32] \\ % LINEA 2
& & \scalebox{0.8}{star separable} \arrow[d,leftrightarrow] & \\ % LINEA 3
& & \scalebox{0.8}{star countable} \arrow[dl, bend right=15] \arrow[d] & \scalebox{0.8}{star compact} \arrow[dl, bend left=18] \arrow[d] \\ % LINEA 4
& \scalebox{0.8}{star countable spread} \arrow[d,leftrightarrow] \arrow[uuuul, crossing over, bend right, <-] \arrow[dr, "\scalebox{0.6}{/}" marking, "\ref{second}", bend left=10, blue] & \scalebox{0.8}{star $\sigma$-compact} \arrow[d] & \scalebox{0.8}{star countably compact} \arrow[d] \arrow[dl, crossing over, "\scalebox{0.6}{/}" marking, "\ref{seventh}", bend left=8, blue] \arrow[dl, crossing over, "\scalebox{0.6}{/}" marking, "\ref{eighth}"', bend right=5, <-, blue] \\ % LINEA 5
\scalebox{0.8}{ccc} \arrow[dr, bend right=20] & \scalebox{0.8}{star hereditarily Lindel\"{o}f} \arrow[d] \arrow[dr, crossing over, bend right=18] & \scalebox{0.8}{star Menger} \arrow[d] \arrow[ul, "\scalebox{0.6}{/}" marking, "\ref{fifth}"', bend left=10, blue] \arrow[u, "\scalebox{0.6}{/}" marking, shift right=1.5ex, "\;\ref{sixth}"', blue] & \scalebox{0.8}{star pseudocompact} \arrow[d,leftrightarrow] \\ % LINEA 6
& \scalebox{0.8}{star ccc} \arrow[ddr, bend right=30] \arrow[ur, crossing over, "\scalebox{0.6}{/}" marking, "\ref{third}", bend left=8, blue] \arrow[ur, crossing over, "\scalebox{0.6}{/}" marking, "\ref{fourth}"', bend right=8, <-, blue] & \scalebox{0.8}{star Lindel\"{o}f} \arrow[d] \arrow[u, "\scalebox{0.6}{/}" marking, shift right=1.5ex, "\;\ref{first}"', blue] & \scalebox{0.8}{pseudocompact} \\ % LINEA 7
& & \scalebox{0.8}{star countable extent} \arrow[d] & \\ % LINEA 8
& & \scalebox{0.8}{feebly Lindel\"{o}f} & \\ % LINEA 9
\end{tikzcd}%\par
\end{adjustbox}
\label{Negative implications involving the star Menger property}
\caption{Negative implications involving the star Menger property}
\end{figure}
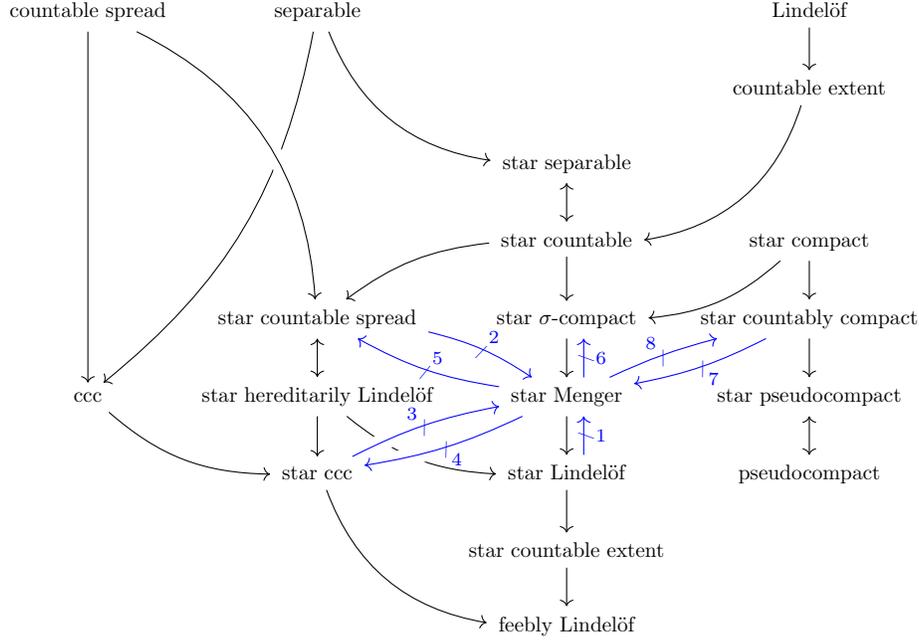

\section{Comments on star selection principles theory}\label{section on star selection principles theory}

In this section, we point out some remarks about the relationship between some star selection principles with the star Menger property and, we also give some examples showing the star Menger property does not coincide with related properties defined in this theory. Further, we address some questions from \cite{SongL}, \cite{SongK} and \cite{SongRK}.

We begin mentioning some star selection principles introduced by Ko\v{c}inac in \cite{K} and \cite{K2}. Before of that, it is important pointing that a property belonging to this theory is called the same (star-Menger property) as the property studied in this work. However, in order to avoid notational confusion, we will denote it just as $SM$ property. Let $\mathscr{A}$ and $\mathscr{B}$ be collections of families of subsets of $X$ and $\mathcal{K}$ be a family of subsets of $X$:\\
%\newline
%$\mathbf{S^*_1(\mathscr{A},\mathscr{B})}$: For any sequence $\{\mathcal{A}_n:n\in\omega\}$ of elements of $\mathscr{A}$, there is a sequence $\{B_n:n\in\omega\}$ such that $B_n\in\mathcal{A}_n$, $n\in\omega$, and $\{St(B_n,\mathcal{A}_n):n\in\omega\}\in\mathscr{B}$.\\
\newline
$\mathbf{S^*_{fin}(\mathscr{A},\mathscr{B})}$: For any sequence $\{\mathcal{A}_n:n\in\omega\}$ of elements of $\mathscr{A}$, there is a sequence $\{\mathcal{B}_n:n\in\omega\}$ such that $\mathcal{B}_n$ is a finite subset of $\mathcal{A}_n$, $n\in\omega$, and $\bigcup_{n\in\omega}\{St(B,\mathcal{A}_n):B\in\mathcal{B}_n\}\in\mathscr{B}$.\\
\newline
$\mathbf{SS^*_{\mathcal{K}}(\mathscr{A},\mathscr{B})}$: For any sequence $\{\mathcal{A}_n:n\in\omega\}$ of elements of $\mathscr{A}$, there is a sequence $\{K_n:n\in\omega\}$ of elements of $\mathcal{K}$ such that $\{St(K_n,\mathcal{A}_n):n\in\omega\}\in\mathscr{B}$.\\

When $\mathcal{K}$ is the collection of all finite (resp. compact) subsets of $X$, it is denoted by $\mathbf{SS^*_{fin}(\mathscr{A},\mathscr{B})}$ (resp. $\mathbf{SS^*_{comp}(\mathscr{A},\mathscr{B})}$) instead of $\mathbf{SS^*_{\mathcal{K}}(\mathscr{A},\mathscr{B})}$. Given a topological space $X$, if $\mathscr{O}$ denote the collection of all open covers of $X$, then $S^*_{fin}(\mathscr{O},\mathscr{O})$ defines the star-Menger property ($SM$), $SS^*_{fin}(\mathscr{O},\mathscr{O})$ defines the strongly star-Menger property ($SSM$). Also, $SS^*_{comp}(\mathscr{O},\mathscr{O})$ defines the star-K-Menger property ($S$-$K$-$M$). Explicitly, a space $X$ is strongly star-Menger ($SSM$) if for each sequence $\{\mathcal{U}_n:n\in\omega\}$ of open covers of $X$, there exists a sequence $\{F_n:n\in\omega\}$ of finite subsets of $X$ such that $\{St(F_n, \mathcal{U}_n):n\in\omega\}$ is an open cover of $X$; a space $X$ is star-Menger ($SM$) if for each sequence $\{\mathcal{U}_n:n\in\omega\}$ of open covers of $X$, there exists a sequence $\{\mathcal{V}_n:n\in\omega\}$ such that $\mathcal{V}_n$ is a finite subset of $\mathcal{U}_n$ for each $n\in\omega$, and $\bigcup_{n\in\omega}\{St(V, \mathcal{U}_n):V\in \mathcal{V}_n\}$ is an open cover of $X$; a space $X$ is star-K-Menger ($S$-$K$-$M$) if for every sequence $\{\mathcal{U}_n:n\in\omega\}$ of open covers of $X$, there exists a sequence $\{K_n:n\in\omega\}$ of compact subsets of $X$ such that $\{St(K_n,\mathcal{U}_n):n\in\omega\}$ is an open cover of $X$.
We refer the reader to \cite{K_survey} to see the current status of previous properties and some other star selection principles. One more property belonging to this theory that has been extensively studied by several authors under different terminology is the star-Lindel\"{o}f property (see \cite{DRRT}) which is defined as follows: A space $X$ is star-Lindel\"{o}f, briefly $SL$, if for every open cover $\mathcal{U}$ of $X$ there exists a countable subcollection $\mathcal{V}$ of $\mathcal{U}$ such that $St(\bigcup\mathcal{V},\mathcal{U})=X$; once again, to avoid notational confusion, we will denote it just by $SL$ property.

Following the general star selection hypothesis  $\mathbf{SS^*_{\mathcal{K}}(\mathscr{A},\mathscr{B})}$, we may also consider the class of star-$M$-Menger in the same way as the classes of star-$K$-Menger and star-$C$-Menger have been defined and studied (see \cite{SongK} y \cite{SY}). Namely, 

\begin{definition}\label{star-M-Menger space}
  A space $X$ is said to be star-$M$-Menger if for any sequence $\{\mathcal{U}_n:n\in\omega\}$ of open covers of $X$, there is a sequence $\{M_n:n\in\omega\}$ of Menger subsets of $X$ such that $\{St(M_n,\mathcal{U}_n):n\in\omega\}$ is an open cover of $X$.
\end{definition}

And making a slight modification to the previous definition, we can establish the following notion:

\begin{definition}\label{star-M_f-Menger space}
  A space $X$ is said to be star-$M_f$-Menger if for any sequence $\{\mathcal{U}_n:n\in\omega\}$ of open covers of $X$, there is a Menger subspace $M\subseteq X$ such that $\{St(M,\mathcal{U}_n):n\in\omega\}$ is an open cover of $X$.
\end{definition}

Note that Definitions \ref{star-M-Menger space} and \ref{star-M_f-Menger space} can be viewed as new star selection properties. However, the following remark, easy to prove, says that these two notions coincide with the star Menger property given in Definition \ref{star Menger space}.

\begin{remark}
Let $X$ be a topological space. Then, the following are equivalent:
\begin{enumerate}
    \item[(i)] $X$ satisfies the star Menger property;
    \item[(ii)] $X$ satisfies the star-$M_f$-Menger property;
    \item[(iii)] $X$ satisfies the star-$M$-Menger property.
\end{enumerate}
\end{remark}
\begin{proof}
To show $(i)\rightarrow(ii)$, let $(\mathcal{U}_n)_{n\in\omega}$ be a sequence of open covers of $X$. Applying the star Menger property of $X$ for the open cover $\mathcal{U}_0$, we can take a Menger subspace of $X$ such that $St(M,\mathcal{U}_0)=X$. It trivially follows that the collection $\{St(M,\mathcal{U}_n):n\in\omega\}$ is an open cover of $X$. Hence, $X$ satisfies the star-$M_f$-Menger property.

To show $(ii)\rightarrow(iii)$, let $(\mathcal{U}_n)_{n\in\omega}$ be a sequence of open covers of $X$. Then, by hypothesis, there exists a Menger subspace of $X$ such that the collection $\{St(M,\mathcal{U}_n):n\in\omega\}$ is an open cover of $X$. Defining for each $n\in\omega$, $M_n=M$, we obtain that the collection $\{St(M_n,\mathcal{U}_n):n\in\omega\}$ is an open cover of $X$. Thus, $X$ satisfies the star-$M$-Menger property.

To show $(iii)\rightarrow(i)$, let $\mathcal{U}$ be an open cover of $X$. We put, for each $n\in\omega$, $\mathcal{U}_n=\mathcal{U}$. Then, applying the star-$M$-Menger property of $X$ to the sequence $(\mathcal{U}_n)_{n\in\omega}$, there exists a sequence $(M_n)_{n\in\omega}$ of Menger subspaces of $X$ such that the collection $\{St(M_n,\mathcal{U}_n):n\in\omega\}$ is an open cover of $X$. We define $M=\bigcup_{n\in\omega}M_n$. Then $M$ is Menger. Furthermore, for each $n\in\omega$, we have $St(M_n, \mathcal{U}_n)\subseteq St(M, \mathcal{U}_n)$. It follows that the collection $\{St(M,\mathcal{U}_n):n\in\omega\}$ is an open cover of $X$. Note that, for each $n\in \omega$, $St(M,\mathcal{U}_n)=St(M,\mathcal{U})$. Hence, $X=St(M,\mathcal{U})$. It shows that $X$ satisfies the star Menger property.
\end{proof}

The previous observation allows us to relate some star selection principles with some star-$\mathcal{P}$ properties. Next, we establish a diagram (Figure \ref{star selection principles and star-P properties}) that shows the immediate implications among star selection properties and related star-$\mathcal{P}$ properties.

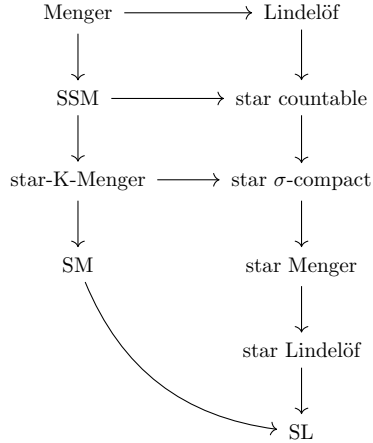
\begin{figure}[h!]
\begin{adjustbox}{center}
\begin{tikzcd}
\scalebox{0.8}{Menger}  \arrow[r] \arrow[d] & \scalebox{0.8}{Lindel\"{o}f} \arrow[d ]\\
\scalebox{0.8}{SSM} \arrow[r] \arrow[d]  & \scalebox{0.8}{star countable} \arrow[d]  \\
\scalebox{0.8}{star-K-Menger} \arrow[r] \arrow[d] & \scalebox{0.8}{star $\sigma$-compact} \arrow[d]\\
\scalebox{0.8}{SM} \arrow[ddr, bend right] & \scalebox{0.8}{star Menger} \arrow[d]\\
  & \scalebox{0.8}{star Lindel\"{o}f} \arrow[d]\\
  & \scalebox{0.8}{SL}
\end{tikzcd}
\end{adjustbox}
\caption{Some star selection principles and star-$\mathcal{P}$ properties}
\label{star selection principles and star-P properties}
\end{figure}

As in Section \ref{section on star Menger spaces}, we may wonder if the star Menger property coincides with some of the properties from star selection theory. In particular, we wonder if there is a relationship between the $SM$ property and the star Menger property. To answer it, we present next examples.

Recall that the Niemytzki plane\index{Niemytzki plane $N(X)$} on a set $X\subseteq\mathbb{R}$, denoted by $N(X)$, has as underlying set $X\times\{0\}\cup\mathbb{R}\times(0,\infty)$. The open upper half-plane $\mathbb{R}\times(0,\infty)$ has the Euclidean topology and the set $X\times\{0\}$ has the topology generated by all sets of the form $\{(x,0)\}\cup B$ where $x\in X$ and $B$ is an open disc in $\mathbb{R}\times(0,\infty)$ which is tangent to $X\times\{0\}$ at the point $(x,0)$. It is well-known that $N(\mathbb{R})$ is completely regular, separable, but not normal, not Lindel\"{o}f, not second countable, not paracompact.

\begin{example}\label{star Menger not SM}
  There exists a Tychonoff star Menger space that is not $SM$. In addition, there is a model where do exist a normal star Menger space that is not $SM$.
\end{example}
\begin{proof}
Since $N(\mathbb{R})$ is separable, it is a star Menger space. In \cite{CGS}, it was pointed out that $N(X)$ is not $SM$ for any $X\subseteq\mathbb{R}$ of size $\mathfrak{c}$. Thus, in particular, $N(\mathbb{R})$ is not $SM$. On the other hand, recall that a set $X\subseteq\mathbb{R}$ is a $Q$-set if for every $A\subseteq X$, $A$ is $F_\sigma$ in $X$ and $|X|>\aleph_0$. It is known that the existence of $Q$-sets are consistent with (and independent from) $\sf{ZFC}$ (see for instance \cite{Miller}). Furthermore, a Niemytzki plane $N(X)$ is normal if and only if $X$ is a $Q$-set (see \cite{Tall}). Again, in \cite{CGS}, it is pointed out that in \cite{JS}, Judah and Shelah built a model with a $Q$-set of size $\mathfrak{d}$. Therefore, in such a model, $N(X)$ with $X$ being such a $Q$-set of size $\mathfrak{d}$ is an example of a normal star Menger that is not $SM$ by Proposition 2.12 in \cite{S}.
\end{proof}

On other hand, we also have an example showing that the $SM$ property does not imply the star Menger property. In Example E of \cite{Tall}, Tall presented, assuming $2^{\aleph_0}=2^{\aleph_1}$, an example of a normal separable space with an uncountable closed discrete set. We provide details of the construction of such example for sake of completeness:\\

\noindent {\bf Construction:} Let $L$ be a set of cardinality $\aleph_1$ disjoint from $\omega$ and $\mathcal{F}$ be a strongly independent family of subsets of $\omega$ of size $2^{\aleph_0} = \mathfrak{c}$.\\
We write $\mathcal{F} = \{A_\alpha: \alpha < \mathfrak{c}\}$. Since $|L| =\aleph_1$, $|\mathcal{P}(L)| = 2^{\aleph_1}$. Since we are assuming $2^{\aleph_0} = 2^{\aleph_1}$, it is possible to build a function $f:\mathcal{P}(L) \to \{A_\alpha: \alpha < \mathfrak{c}\} \cup \{\omega \smallsetminus A_\alpha: \alpha < \mathfrak{c}\}$ which is bijective and complement-preserving (for each $B \subseteq L$, $f(L\smallsetminus B) = \omega \smallsetminus f(B)$).\\
Now, let $X_0 = L \cup \omega$ with a subbase $\varphi$ for a topology defined by
\begin{enumerate}
\item if $M\subseteq L$, then $M \cup f(M) \in \varphi$,
\item if $n \in \omega$, then $\{n\} \in \varphi$,
\item if $p \in X$, then $X \smallsetminus \{p\} \in \varphi$.
\end{enumerate}
Then $X_0$ is a normal $T_1$ separable space with an uncountable closed discrete subspace.
\vspace{.5cm}

In \cite{SongROC}, Song defined a modification of Tall's example to get an example, under $2^{\aleph_0}=2^{\aleph_1}$, of a normal feebly Lindel\"{of} space that is not star Lindel\"{o}f\footnote{Song gave this example to partially answer a question posed in \cite{AJW}.} and very recently, in \cite{CCG}, the authors used Song's example to get an example, assuming $2^{\aleph_0}=2^{\aleph_1}$ and $\omega_1<\mathfrak{d}$, of a normal $SM$ space that is not $SSM$ (not Dowker space)\footnote{The authors of \cite{CCG} gave this example to answer a couple of questions posed in \cite{CGS}}. In conclusion, we have the following

\begin{example}\label{normal SM not star Menger}
    Assuming $2^{\aleph_0}=2^{\aleph_1}$ and $\omega_1<\mathfrak{d}$, there exists a normal $SM$ space that is not star Menger.
\end{example}
\begin{proof}
    Let $X_0 = L \cup \omega$ denote the space given above. Let $X = L \cup (\omega_1 \times \omega)$ with the topology given as follows: a basic open set of
\begin{description}
\item[(i)] $x \in L$ is a set of the form $V^U_\alpha(x) = (U \cap L) \cup \big((\alpha, \omega_1) \times (U \cap \omega)\big)$ where $U$ is a neighbourhood of $x \in X_0$ and $\alpha < \omega_1$.
\item[(ii)] $\langle \alpha , n \rangle \in (\omega_1 \times \omega)$ is a set of the form $V_W(\langle \alpha , n \rangle) = W \times \{n\}$ where $W$ is a neighbourhood of $\alpha$ in $\omega_1$ with the usual topology.
\end{description}
Then $X$ is a normal $SM$ space (see \cite{CCG} for details) that is not star Lindel\"{of} (see \cite{SongROC}) and therefore neither star Menger.
\end{proof}

By Theorem 2.7 in \cite{AJW}, it is known that every star Lindel\"{o}f space is feebly Lindel\"{o}f and, it is easy to prove that every star Lindel\"{o}f space is also $SL$. However, the reverse of those implications does not hold in general; an example of a feebly Lindel\"{o}f not star Lindel\"{o}f space was given in \cite{AJW} and, an example of a $SL$ space that is not star Lindel\"{o}f can be found in \cite{SongL} (under different terminology). Nevertheless, one might wonder if both properties together imply the star Lindel\"{o}f property, that is to say, if a feebly Lindel\"{o}f $SL$ space is star Lindel\"{o}f; it turns out that it is not true in general. In \cite{SongRON}, Song proved that the space $X$ described in Example \ref{normal SM not star Menger} is $SL$. Thus, such a space is an example of a feebly Lindel\"{o}f $SL$ space that is not star Lindel\"{o}f.\\

The following diagram (Figure \ref{Negative implications with star selection principles}) includes the negative implications between the star Menger property and the $SM$ property obtained as consequences from two examples given in this section; such examples are in the following list:

\begin{enumerate}
    \item \label{SSP:first} Example \ref{star Menger not SM} is a star Menger space that is not $SM$.
    \item \label{SSP:second} Example \ref{normal SM not star Menger} is a $SM$ space that is not star Menger.
\end{enumerate}

\begin{figure}[h!]
\begin{adjustbox}{center}
\begin{tikzcd}
\scalebox{0.8}{Menger}  \arrow[r] \arrow[d] & \scalebox{0.8}{Lindel\"{o}f} \arrow[d ]\\
\scalebox{0.8}{SSM} \arrow[r] \arrow[d]  & \scalebox{0.8}{star countable} \arrow[d]  \\
\scalebox{0.8}{star-K-Menger} \arrow[r] \arrow[d] & \scalebox{0.8}{star $\sigma$-compact} \arrow[d] \\
\scalebox{0.8}{SM} \arrow[ddr, bend right] \arrow[r, "\scalebox{0.6}{/}" marking, "\;\;\;\ref{SSP:second}", shift left=.8ex, blue] \arrow[r, "\scalebox{0.6}{/}" marking, "\;\;\;\ref{SSP:first}"', shift right=.8ex, <-, blue]  & \scalebox{0.8}{star Menger} \arrow[d] \\
  & \scalebox{0.8}{star Lindel\"{o}f} \arrow[d]\\
  & \scalebox{0.8}{SL}
\end{tikzcd}
\end{adjustbox}
\caption{Negative implications with star selection principles}
\label{Negative implications with star selection principles}
\end{figure}
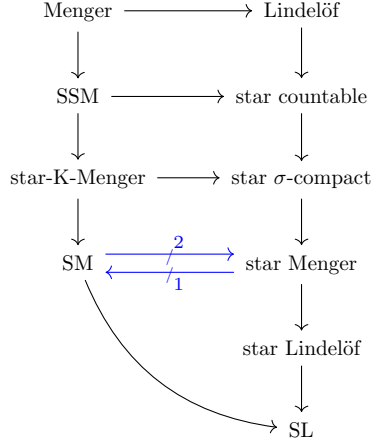

To finish this section, we address some questions due to Song. In \cite{SongL}, Song gave an example (under different terminology) of a Tychonoff $SL$ space that is not star Lindel\"{o}f and he asked (see Remark 1 in \cite{SongL}) if a normal $SL$ space is star Lindel\"{o}f; this is not the case. The space in Example \ref{normal SM not star Menger} is a (consistent) normal $SL$ space (\cite{SongRON}) that is not star Lindel\"{o}f (\cite{SongROC}). On the other hand, in \cite{SongK}, Song gave an example of a $T_1$ $SM$ space that is not star-$K$-Menger\footnote{There is a mistake in this example; it is not difficult to show that the space in this example is, in fact, star-$K$-Menger.} and he asked (see Remark 2.5 in \cite{SongK}) about a Hausdorff (or Tychonoff) example. Thereafter, in \cite{SongRK}, the author gave a Hausdorff example and asked, again, about a regular or Tychonoff example (see Question 2.1 in \cite{SongRK}), that is, is there a regular (or Tychonoff) $SM$ space which is not star-$K$-Menger? To answer it, it is easy to show that if a space $X$ is star-$K$-Menger, then $X$ is star $\sigma$-compact (and therefore, star Lindel\"{o}f). Thus, Example \ref{normal SM not star Menger} consistently answer this question in the affirmative, since that example is a normal $SM$ space that is not star Lindel\"{o}f (hence, neither star-$K$-Menger).

\section{Behaviour of the star Menger property}\label{section on the behaviour of the star Menger property}

In this section we study how the star Menger property behaves. In particular, we investigate the type of subspaces to which it is inherited, when the product of two star Menger spaces conserve this property, and under which conditions on continuous functions it is a direct or inverse invariant.

Although it is easily showed that the Menger property is inherited by closed subspaces, the star Menger property does not behave the same. If $X=\Psi(\mathcal{A})$ with $\mathcal{A}$ being a mad family on $\omega$, then $X$ is a star Menger space (since this space is separable). However, it is clear that the subset $\mathcal{A}$ is not star Menger. Since this subset is closed $G_\delta$ (even it is a zero set), we conclude:

\begin{proposition}
The star Menger property is not necessarily inherited either by closed subset or closed $G_\delta$-sets or zero sets.  
\end{proposition}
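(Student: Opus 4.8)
The plan is to exhibit a single space realizing all three failures at once, namely the Mr\'owka--Isbell space $\Psi(\mathcal{A})$ for a mad family $\mathcal{A}$ on $\omega$: the ambient space will be star Menger, while the subspace $\mathcal{A}$ -- which is simultaneously closed, $G_\delta$, and a zero set in $\Psi(\mathcal{A})$ -- will fail to be star Menger. First I would check that $\Psi(\mathcal{A})$ is star Menger. Every basic neighbourhood $\{A\}\cup(A\setminus F)$ of a point $A\in\mathcal{A}$, with $F\in[\omega]^{<\omega}$, contains the cofinite subset $A\setminus F$ of $\omega$, so $\omega$ is dense in $\Psi(\mathcal{A})$; being countable, $\omega$ is Menger, and hence $\Psi(\mathcal{A})$ is star Menger by basic fact (a) of the preliminaries. (Alternatively, $\Psi(\mathcal{A})$ is separable, hence star Menger by Figure~\ref{relationships among star-P properties}.)

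Next I would show that the subspace $\mathcal{A}$ is not star Menger. The subspace topology on $\mathcal{A}$ is discrete, since $(\{A\}\cup(A\setminus F))\cap\mathcal{A}=\{A\}$ for every $A\in\mathcal{A}$ and every $F\in[\omega]^{<\omega}$, and $\mathcal{A}$ is uncountable because no countable almost-disjoint family is maximal. Taking the open cover $\mathcal{U}=\{\{A\}:A\in\mathcal{A}\}$ of $\mathcal{A}$, one has $St(M,\mathcal{U})=M$ for every $M\subseteq\mathcal{A}$, so the only star kernel of $\mathcal{U}$ is $\mathcal{A}$ itself; but an uncountable discrete space is not Lindel\"of, hence not Menger. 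Therefore no Menger subspace of $\mathcal{A}$ stars onto $\mathcal{A}$, i.e.\ $\mathcal{A}$ is not star Menger.

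Finally I would verify that $\mathcal{A}$ is closed, $G_\delta$, and a zero set in $\Psi(\mathcal{A})$. It is closed because its complement $\omega$ is open (all points of $\omega$ are isolated). Enumerating $\omega=\{n_k:k\in\omega\}$, each set $\Psi(\mathcal{A})\setminus\{n_0,\dots,n_k\}$ is open and $\mathcal{A}=\bigcap_{k\in\omega}\big(\Psi(\mathcal{A})\setminus\{n_0,\dots,n_k\}\big)$, so $\mathcal{A}$ is $G_\delta$. For the zero-set claim, define $g\colon\Psi(\mathcal{A})\to[0,1]$ by $g(A)=0$ for $A\in\mathcal{A}$ and $g(n)=2^{-n}$ for $n\in\omega$; since $g(n)\to 0$, for each $A\in\mathcal{A}$ and $\varepsilon>0$ the set $F_\varepsilon=\{n\in A:g(n)\ge\varepsilon\}$ is finite and $g$ maps the neighbourhood $\{A\}\cup(A\setminus F_\varepsilon)$ into $[0,\varepsilon)$, so $g$ is continuous and $g^{-1}(0)=\mathcal{A}$. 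No part of this is a genuine obstacle; the only points that deserve care are that a mad family is necessarily uncountable (so that $\mathcal{A}$ truly fails the Menger property, not merely compactness) and the continuity of $g$ at the points of $\mathcal{A}$ -- everything else is immediate from the definitions and the basic facts recorded in the preliminaries.
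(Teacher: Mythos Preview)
Your proposal is correct and follows exactly the same approach as the paper: both use the Mr\'owka--Isbell space $\Psi(\mathcal{A})$ for a mad family $\mathcal{A}$, observe it is star Menger by separability, and note that the uncountable closed discrete subspace $\mathcal{A}$ is a closed $G_\delta$ zero set that fails to be star Menger. The paper states these facts without detail, whereas you spell out the verifications (density of $\omega$, discreteness and uncountability of $\mathcal{A}$, and the explicit continuous function witnessing that $\mathcal{A}$ is a zero set), but the argument is the same.
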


On the other hand, if we consider $X=L(D(\omega_1))$ being the one-point Lindel\"{o}fication of the discrete space $D(\omega_1)$ of size $\omega_1$, then $X$ is a star Menger space and $D(\omega_1)$ is an open dense subset which is not star Menger. Therefore, we obtain:

\begin{proposition}
  The star Menger property is not necessarily inherited either by open sets or dense sets, nor even by open dense sets.
\end{proposition}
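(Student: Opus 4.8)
The plan is to exhibit a concrete star Menger space with an open dense (hence open, hence dense) subset that fails to be star Menger; the space $X = L(D(\omega_1))$, the one-point Lindel\"{o}fication of the discrete space $D(\omega_1) = \{d_\alpha : \alpha < \omega_1\}$, is the natural candidate already suggested in the statement, and it does all three jobs at once.

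First I would verify that $X$ is star Menger. The point is that $X$ is Lindel\"{o}f: any open cover $\mathcal{U}$ must contain a member $U_\infty$ containing the point $\infty$, and by definition of the one-point Lindel\"{o}fication $X \setminus U_\infty$ is countable, so countably many further members of $\mathcal{U}$ cover the rest. Since $X$ is Lindel\"{o}f it is Menger (every Lindel\"{o}f space — indeed the argument is even easier here since $X$ is ``$\sigma$-compact-like'': $X$ itself is the union of the compact sets $\{\infty\}\cup\{d_\alpha:\alpha<\beta\}$... actually these need not be compact; so I would just use Lindel\"{o}f $\Rightarrow$ Menger is false in general, but here I can argue directly: $X$ is in fact the countable... no). The cleanest route: $X$ is Lindel\"{o}f, and more to the point $X = \{\infty\} \cup D(\omega_1)$ where $\{\infty\}$ is Menger (it is a point) and $St(\{\infty\},\mathcal{U}) \supseteq U_\infty \supseteq X \setminus (\text{countable set})$; then absorb the remaining countable set. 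Concretely, given an open cover $\mathcal{U}$, pick $U_\infty \ni \infty$, enumerate $X \setminus U_\infty = \{d_{\alpha_n} : n \in \omega\}$, pick $U_n \ni d_{\alpha_n}$, and set $M = \{\infty\}\cup\{d_{\alpha_n}:n\in\omega\}$; this $M$ is a countable space, hence Lindel\"{o}f, hence Menger (a countable space is trivially Menger: in each $\mathcal{U}_n$ choose finitely many sets covering more and more of the countable set by a diagonal argument — or simply note countable $\Rightarrow$ $\sigma$-compact $\Rightarrow$ Menger), and $St(M,\mathcal{U}) = X$ since $\infty$ pulls in $U_\infty$ and each $d_{\alpha_n}$ pulls in $U_n$. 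So $X$ is star Menger.

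Next I would observe that $D(\omega_1) = X \setminus \{\infty\}$ is open in $X$ (its complement $\{\infty\}$ is closed) and dense in $X$ (every neighbourhood of $\infty$ meets $D(\omega_1)$, being co-countable and $\omega_1$ uncountable), so it is simultaneously an open set, a dense set, and an open dense set. Finally, $D(\omega_1)$ with the subspace (= discrete) topology is an uncountable discrete space; the open cover by singletons $\{\{d_\alpha\} : \alpha < \omega_1\}$ has the property that for any $M \subseteq D(\omega_1)$ we have $St(M, \{\{d_\alpha\}\}) = M$, so to cover $D(\omega_1)$ by a star we would need $M = D(\omega_1)$, i.e. the whole uncountable discrete space would have to be Menger. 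But a Menger space is Lindel\"{o}f, and an uncountable discrete space is not Lindel\"{o}f; hence $D(\omega_1)$ is not star Menger. This yields all three negative statements at once.

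I do not anticipate a real obstacle here — the only point requiring the slightest care is the throwaway claim that $M$ (a countable space) is Menger, which I would dispatch by noting any countable regular space is $\sigma$-compact (a countable union of singletons) and that $\sigma$-compact implies Menger (stated earlier in the excerpt, essentially, via ``the product of a Menger space and a compact space is Menger'' applied trivially, or just by the classical diagonal argument). Everything else is immediate from the definition of the one-point Lindel\"{o}fification and the fact that taking stars against the cover by singletons is the identity operation on subsets.
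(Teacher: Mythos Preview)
Your proposal is correct and uses exactly the same example as the paper, namely $X=L(D(\omega_1))$ with the open dense subset $D(\omega_1)$; the paper simply asserts that $X$ is star Menger and $D(\omega_1)$ is not, whereas you spell out the details. The meandering passage where you debate whether Lindel\"{o}f implies Menger is unnecessary---your final argument (the kernel $M=\{\infty\}\cup(X\setminus U_\infty)$ is countable, hence $\sigma$-compact, hence Menger) is clean and should simply replace that digression.
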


The following result is showed in \cite{RT},

\begin{proposition}[\cite{RT}]
  Let $\mathcal{P}$ be a property that is preserved by countable unions. If $\mathcal{P}$ is inherited by open subsets or if $\mathcal{P}$ is inherited by closed subsets, then the star $\mathcal{P}$ property is inherited by open $F_\sigma$-subsets.
\end{proposition}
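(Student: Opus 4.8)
The plan is to fix a star-$\mathcal{P}$ space $X$, an open $F_\sigma$ subset $W$ of $X$, and an open cover $\mathcal{U}$ of $W$, and to produce a subspace $Y\subseteq W$ with property $\mathcal{P}$ such that $St(Y,\mathcal{U})=W$. First I would record two harmless normalizations. Since $W$ is open in $X$, every member of $\mathcal{U}$ is open in $X$ and contained in $W$, and $\bigcup\mathcal{U}=W$. Writing $W=\bigcup_{n\in\omega}F_n$ with each $F_n$ closed in $X$, I may also assume $F_n\subseteq F_{n+1}$, replacing $F_n$ by $F_0\cup\dots\cup F_n$.

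The core of the argument is a level-by-level use of the hypothesis. For each $n$ the family $\mathcal{U}_n:=\mathcal{U}\cup\{X\setminus F_n\}$ is an open cover of $X$, since $(X\setminus F_n)\cup W\supseteq(X\setminus F_n)\cup F_n=X$; so star-$\mathcal{P}$ yields a subspace $Y_n\subseteq X$ with property $\mathcal{P}$ and $St(Y_n,\mathcal{U}_n)=X$. The first key step is to check that $F_n\subseteq St(Y_n,\mathcal{U})$: if $x\in F_n$, some $V\in\mathcal{U}_n$ with $x\in V$ meets $Y_n$, and $V$ cannot be $X\setminus F_n$, so $V\in\mathcal{U}$ and $x\in St(Y_n,\mathcal{U})$. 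Moreover, because every $V\in\mathcal{U}$ lies in $W$ one has $V\cap Y_n=V\cap(Y_n\cap W)$, whence $St(Y_n,\mathcal{U})=St(Y_n\cap W,\mathcal{U})$, so in fact $F_n\subseteq St(Y_n\cap W,\mathcal{U})$.

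The second key step is to see that $Y_n\cap W$ has property $\mathcal{P}$, and this is where the two alternative hypotheses on $\mathcal{P}$ enter. If $\mathcal{P}$ is inherited by open subspaces this is immediate, since $Y_n\cap W$ is open in $Y_n$. If instead $\mathcal{P}$ is inherited by closed subspaces, I would observe that $Y_n\cap W=\bigcup_{m\in\omega}(Y_n\cap F_m)$ is a countable union of closed subspaces of $Y_n$, each of which has $\mathcal{P}$; since $\mathcal{P}$ is preserved by countable unions, $Y_n\cap W$ has $\mathcal{P}$ in either case. I expect this to be the main obstacle: the naive candidate $Y_n\cap F_n$ need not carry the stars of the points of $F_n$, because a witnessing $V\in\mathcal{U}$ is open and may poke out of $F_n$; noticing that $Y_n\cap W$ is itself $F_\sigma$ in $Y_n$ is exactly what rescues the closed-heredity case.

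To finish, set $Y:=\bigcup_{n\in\omega}(Y_n\cap W)$. Then $Y\subseteq W$, and $Y$ has property $\mathcal{P}$ as a countable union of subspaces with $\mathcal{P}$. By monotonicity of the star operation together with the identity $St(Y_n\cap W,\mathcal{U})=St(Y_n,\mathcal{U})\supseteq F_n$ established above, $St(Y,\mathcal{U})\supseteq\bigcup_{n}F_n=W$; and $Y\subseteq W$ with $\bigcup\mathcal{U}=W$ gives $St(Y,\mathcal{U})\subseteq W$. Hence $St(Y,\mathcal{U})=W$, so $W$ is star-$\mathcal{P}$, which completes the plan.
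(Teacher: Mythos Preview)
Your proof is correct. The paper itself does not supply a proof of this proposition---it is quoted from \cite{RT}---so there is no in-paper argument to compare against; your approach (extend the cover by $X\setminus F_n$, pull the kernel back into $W$, and use the $F_\sigma$ structure of $W$ to handle the closed-heredity case via countable unions) is the standard one and would be at home in the cited source.
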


It is well-known that the Menger property is preserved by countable unions and it is inherited by closed subsets. Hence we have the following facts,

\begin{proposition}\label{On clopen sets}
  The star Menger property is inherited by open $F_\sigma$-subsets. In particular, the star Menger property is inherited by clopen sets as well as by cozero sets.
\end{proposition}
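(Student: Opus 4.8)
The plan is to apply the quoted Proposition (the one attributed to \cite{RT}) directly, since the Menger property satisfies both of its hypotheses. First I would recall that the Menger property is preserved by countable unions: if $M = \bigcup_{k\in\omega} M_k$ with each $M_k$ Menger, then given a sequence $(\mathcal{W}_n)_{n\in\omega}$ of open covers of $M$, split $\omega$ into countably many infinite pieces $\{I_k : k\in\omega\}$ and use the Menger property of $M_k$ on the subsequence $(\mathcal{W}_n)_{n\in I_k}$ to extract finite subfamilies covering $M_k$; the union over $k$ of all these finite choices is the desired selection covering $M$. (One must take traces on $M_k$ and then lift back to the ambient cover, which is routine.) Second, the Menger property is inherited by closed subsets: a closed subset $F$ of a Menger space $M$, together with the trace of each $\mathcal{W}_n$ augmented by the open set $M\setminus F$, reduces a Menger selection for $M$ to one for $F$. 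Both facts are standard and are explicitly cited in the excerpt as ``well-known''.

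Having verified the two hypotheses, the Proposition from \cite{RT} immediately gives that the star Menger property is inherited by open $F_\sigma$-subsets. For the ``in particular'' clauses, I would note that a clopen subset is in particular an open $F_\sigma$-subset (it equals itself, which is open and closed, hence trivially an open $F_\sigma$), so the star Menger property passes to clopen subsets. Similarly, a cozero set $U = f^{-1}(\mathbb{R}\setminus\{0\})$ for a continuous real-valued $f$ can be written as $U = \bigcup_{n\in\omega} f^{-1}\bigl(\mathbb{R}\setminus[-\tfrac1n,\tfrac1n]\bigr)$, a countable union of cozero sets whose closures in $X$ are contained in $U$; in particular each $f^{-1}\bigl(\mathbb{R}\setminus[-\tfrac1n,\tfrac1n]\bigr)$ is open, so $U$ is an open $F_\sigma$-subset of $X$, and the conclusion follows.

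There is essentially no obstacle here: the entire content has been packaged into the cited Proposition, and what remains is only to observe that ``Menger'' meets its two hypotheses and that clopen sets and cozero sets are special cases of open $F_\sigma$-sets. If any step deserves care, it is the lifting of covers between a subspace and the ambient space when checking the countable-union and closed-hereditary properties of Mengerness, but this is purely bookkeeping and is already invoked in spirit in the ``Fact'' proved inside Example~\ref{SL not SM}.
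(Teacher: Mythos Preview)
Your proposal is correct and follows exactly the approach of the paper: the paper simply states, immediately before the proposition, that the Menger property is preserved by countable unions and inherited by closed subsets, and then invokes the cited proposition from \cite{RT} without further argument. Your write-up is in fact more detailed than the paper's (which gives no explicit proof at all), and the extra verifications you sketch for the countable-union and closed-hereditary facts, as well as the observation that clopen and cozero sets are open $F_\sigma$, are all routine and correct.
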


Another kind of subsets for which it is worth asking if the star Menger property is inherited by, are the regular closed subsets and the regular open subsets. In contrast to the above, the behaviour of star Menger property is not good even for these subsets.

\begin{proposition}
The star Menger property is not necessarily inherited by regular closed subsets.  
\end{proposition}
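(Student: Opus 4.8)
The plan is to exhibit a Tychonoff star Menger space $X$ together with a regular closed subspace $F$ that is not star Menger, i.e.\ to show that the phenomenon of Proposition~\ref{On clopen sets} genuinely breaks once ``clopen'' is weakened to ``regular closed''. First I would fix a convenient non-star Menger target $F$. A clean one is a \emph{fan of convergent sequences with a Lindel\"ofying apex}: let $F$ consist of $\omega_1$ pairwise disjoint nontrivial convergent sequences $C_\alpha=\{(\alpha,n):n<\omega\}$ with limits $\ell_\alpha$, plus one extra point $\ast$ whose basic neighbourhoods are the sets $\{\ast\}\cup\bigcup_{\alpha\notin K}\bigl(\{\ell_\alpha\}\cup\{(\alpha,n):n\ge m_\alpha\}\bigr)$ for $K\in[\omega_1]^{\le\omega}$ and $(m_\alpha)_{\alpha\notin K}\in\omega^{\omega_1\setminus K}$. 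This $F$ is zero-dimensional and Hausdorff, hence Tychonoff. The step I would carry out in full is that $F$ is not star Menger: for the open cover consisting of all singletons $\{(\alpha,n)\}$ with $n\ge 1$, all singletons $\{(\alpha,0)\}$, the sets $D_\alpha=\{\ell_\alpha\}\cup\{(\alpha,n):n\ge1\}$, and the single neighbourhood $\{\ast\}\cup\bigcup_\alpha D_\alpha$ of $\ast$, the only member containing $(\alpha,0)$ is $\{(\alpha,0)\}$, so any set whose star is $F$ must contain all of $\{(\alpha,0):\alpha<\omega_1\}$; but this set is a closed discrete subspace of $F$ of size $\omega_1$, and a Menger subspace, being Lindel\"of, can contain no such set.

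Next I would realise $F$ as a regular closed subspace of a star Menger $X$. The strategy is to make $F$ the closure, inside a larger zero-dimensional Tychonoff space $X$, of its set $D=\{(\alpha,n):\alpha<\omega_1,\ n<\omega\}$ of isolated points, with $X\setminus F$ open. Concretely one attaches to each non-isolated point of $F$ (the $\ell_\alpha$'s and $\ast$) a convergent sequence of brand-new isolated points, so that those points acquire in $X$ neighbourhoods meeting $X\setminus F$ — whence they fall out of $\operatorname{int}_X F$, so $\operatorname{int}_X F=D$, while $F$ stays closed and $\overline{D}^{\,X}=F$, making $F$ regular closed. The work is then to check that every open cover $\mathcal U$ of $X$ admits a Menger $M\subseteq X$ with $St(M,\mathcal U)=X$: one first uses a member of $\mathcal U$ about $\ast$ to absorb co-countably many whole ``columns'', leaving only countably many columns and the finitely-many ``heads'' of the remaining columns; the attached sequences — and auxiliary clustering points placed \emph{outside} $F$ — are meant to let those heads be covered by the star of a $\sigma$-compact (hence Menger) set rather than being forced into $M$.

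The hard part is exactly this last balancing act, and I expect it to be the main obstacle. The single feature making $F$ fail star-Mengerness is an $\aleph_1$-sized closed discrete subset that any Menger star-kernel would be forced to contain; to make $X$ star Menger one must ``dissolve'' that set in $X$ (so that in $X$ those points accumulate and can be covered cheaply), yet any point of $X$ to which points of $D$ accumulate automatically lies in $\overline{D}^{\,X}=F$, and adding such a limit to $F$ both enlarges $F$ and repairs its star-Mengerness. So the attached structure has to be threaded very carefully: enough (and placed outside $F$) to defeat the adversarial covers of $X$, but little enough that $F$ remains regular closed and non-star Menger. If this cannot be reconciled, the fallback would be to start instead from an $F$ that is non-star Menger for a ``robust'' reason not tied to closed discrete sets — e.g.\ a Pixley--Roy space as in Examples~\ref{scs not star Menger} and \ref{star ccc not star Menger}, which is even star Lindel\"of — and try to place such an $F$ as a regular closed subspace of a star Menger space; either way, producing the ambient $X$ and verifying its star-Mengerness is where the real work lies.
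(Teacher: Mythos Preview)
Your sketch does not yield a star Menger $X$, and the obstacle you flag is in fact fatal for this particular $F$. In your $X$ the heads $\{(\alpha,0):\alpha<\omega_1\}$ remain an uncountable closed discrete set of \emph{isolated} points: you attach new sequences only to the $\ell_\alpha$'s and to $\ast$, so each $(\alpha,0)$ stays isolated in $X$, and neither the $\ell_\alpha$'s, nor $\ast$, nor any of the new points lie in the closure of the set of heads. Its complement is therefore open in $X$, and covering that complement by open sets missing every head, together with the singletons $\{(\alpha,0)\}$, gives an open cover of $X$ for which any star kernel must contain all $\omega_1$ heads --- so $X$ is not even star Lindel\"of. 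Your own observation already kills the ``auxiliary clustering points outside $F$'' idea: since the heads lie in $D\subseteq\operatorname{int}_X F$, any limit of heads lies in $\overline{\operatorname{int}_X F}=F$.

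The paper evades this by building the non-star Menger piece so that its failure lives at \emph{limit-type} points rather than at isolated ones. It takes $X_1=\mathcal A_1\cup(\omega_1\times\omega)$, where $\mathcal A_1$ is an almost disjoint family of size $\omega_1$, $\omega_1\times\omega$ carries the product topology (order topology on $\omega_1$), and each $a\in\mathcal A_1$ has basic neighbourhoods $\{a\}\cup\bigl((\beta,\omega_1)\times(a\setminus F)\bigr)$. Now each row $\omega_1\times\{n\}$ is countably compact, so for \emph{any} open cover a countable set already stars over the entire body $\omega_1\times\omega$; the non-star-Mengerness of $X_1$ comes instead from the closed discrete set $\mathcal A_1$ of non-isolated points (the argument uses that a Menger subspace meets each copy of $\omega_1$ in a bounded set). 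One then glues $X_1$ to a Mr\'owka--Isbell space $\Psi(\mathcal A_2)$ along a bijection $\mathcal A_1\to\mathcal A_2$. In the quotient $X$, the image $Y$ of $X_1$ equals $\overline{\omega_1\times\omega}$ and is regular closed, homeomorphic to $X_1$, hence not star Menger; but $X$ is star countable, since the countably compact rows are handled by finite kernels and the identified points $a_\alpha$ now lie in the closure of the countable dense set $\omega\subseteq\Psi(\mathcal A_2)$ sitting \emph{outside} $Y$. The moral: arrange your $F$ so that the uncountable closed discrete witness consists of points that are not in $\operatorname{int}_X F$; those can be dissolved by gluing from outside without the new limits being forced into $F$.
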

\begin{proof}
Let $\mathcal{A}_1$ be an almost disjoint family on $\omega$ with $|\mathcal{A}_1|=\omega_1$. We consider $X_1=\mathcal{A}_1\cup(\omega_1\times\omega)$ with the topology generated by the following basic open sets: The set $\omega_1\times\omega$ has the usual product topology (where $\omega_1$ is considered with the order topology) and it is an open subset of $X_1$. For each $a\in\mathcal{A}_1$, a basic open set has the form $B_{\beta, F}(a)=[(\beta,\omega_1)\times(a\setminus F)]\cup\{a\}$ with $\beta\in\omega_1$ and $F$ being a finite subset of $a$.

Let us show that $X_1$ is not star Menger. We enumerate $\mathcal{A}_1=\{a_\alpha:\alpha\in\omega_1\}$. Then, for each $\alpha\in\omega_1$, let $U_\alpha=[(\alpha, \omega_1)\times a_\alpha]\cup\{a_\alpha\}$. Thus, the collection $\mathcal{U}=\{U_\alpha:\alpha\in\omega_1\}\cup\{\omega_1\times\omega\}$ is an open cover of $X_1$. Let $M$ be any Menger subspace of $X_1$. Since $\mathcal{A}_1$ is a closed discrete subset of $X_1$, then  $M\cap\mathcal{A}_1$ is at most countable. So, we can take $\beta_1\in\omega_1$ such that for each $\alpha>\beta_1$, $a_\alpha\notin M$. On the other hand, note that for each $n\in\omega$ the set $\omega_1\times\{n\}$ is a closed subset of $X_1$. Thus, $(\omega_1\times\{n\})\cap M$ is a Menger subset of $\omega_1\times\{n\}$ and then there exists $\gamma_n\in\omega_1$ such that $[(\gamma_n,\omega_1)\times\{n\}]\cap M=\emptyset$. Let $\beta_2=sup\{\gamma_n:n\in\omega\}$. Then, we define $\beta=max\{\beta_1,\beta_2\}$. It follows that $U_\alpha\cap M=\emptyset$ for any $\alpha>\beta$. Since $U_\alpha$ is the only element of $\mathcal{U}$ containing to $a_\alpha$, then $a_\alpha\notin St(M,\mathcal{U})$ with $\alpha>\beta$. We conclude that $X_1$ is not star Menger.

Now, let $X_2=\Psi(\mathcal{A}_2)$ with $\mathcal{A}_2$ being an almost disjoint family on $\omega$ of size $\omega_1$. Note that $X_2$ is star Menger as it is separable. Assume $X_1\cap X_2=\emptyset$. We take a bijection $f:\mathcal{A}_1\rightarrow \mathcal{A}_2$ and let $X$ be the quotient space obtained from the discrete sum $X_1\oplus X_2$ by identifying $a_\alpha$ of $\mathcal{A}_1$ with $f(a_\alpha)$ of $\mathcal{A}_2$ for each $\alpha\in\omega_1$. Namely, $$X=(\omega_1\times\omega)\cup\{(a_\alpha, f(a_\alpha)): \alpha\in\omega_1\}\cup\omega.$$
Let $q:X_1\oplus X_2\rightarrow X$ be the quotient map and we consider $Y=q[X_1]$. Since $Y=\overline{\omega_1\times\omega}$, it follows that $Y$ is a regular closed subset in $X$ . However, $Y$ is not star Menger as it is homeomorphic to $X_1$. Indeed, $q\!\!\restriction_{X_1}$ is a continuous bijection. To see it is open, let $a_\alpha\in\mathcal{A}_1$ and $F$ be a finite subset of $a_\alpha$. Then, $q[\{a_\alpha\}\cup((\beta,\omega_1)\times a_\alpha\setminus F)]=U\cap Y$ with $U=\{(a_\alpha, f(a_\alpha))\}\cup((\beta,\omega_1)\times(a_\alpha\setminus F))\cup f(a_\alpha)$ which is an open set in $X$. So, $U\cap Y$ is an open set in $Y$ and then  $q\!\!\restriction_{X_1}$ is open. It shows  $q\!\!\restriction_{X_1}$ is an homeomorphism. Finally, let us show that $X$ is star Menger. Let $\mathcal{U}$ be an open cover of $X$. Since for each $n\in\omega$, $\omega_1\times\{n\}$ is countably compact, there is a finite subset $F_n$ of $q[\omega_1\times\{n\}]$ such that $q[\omega_1\times\{n\}]\subseteq St(F_n,\mathcal{U})$. We put $C_1=\bigcup_{n\in\omega}F_n$. Thus, $q[\omega_1\times\omega]\subseteq St(C_1,\mathcal{U})$. On the other hand, we can take a countable subset $C_2$ of $q[X_2]$ such that $q[X_2]\subseteq St(C_2,\mathcal{U})$ as $q[X_2]$ is separable. Let $C=C_1\cup C_2$. It follows that $X = St(C,\mathcal{U})$. This shows that $X$ is even star countable and hence, star Menger.
\end{proof}

We have same situation for regular open subsets. In fact, there is an example of a normal star Menger space with a regular open subset that is not star Menger.

\begin{proposition}
The star Menger property is not necessarily inherited by regular open subsets.  
\end{proposition}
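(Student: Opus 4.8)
The plan is to build a \emph{compact} space together with a regular open subspace that is not star Menger. Since every compact space is Menger and every space with a property $\mathcal{P}$ is star-$\mathcal{P}$, such a compact space is automatically normal and star Menger, so all the work goes into producing the regular open subset. Any such subset must fail to be Lindel\"{o}f (indeed fail to be star Menger), so the compact space cannot have countable spread; the natural candidate is the one-point compactification of an uncountable discrete space.

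Concretely, I would take $X = D(\omega_1)\cup\{\infty\}$, the one-point compactification of the discrete space $D(\omega_1)=\{d_\alpha:\alpha<\omega_1\}$, so that a basic neighbourhood of $\infty$ has the form $\{\infty\}\cup(D(\omega_1)\setminus F)$ with $F\in[D(\omega_1)]^{<\omega}$; then $X$ is compact Hausdorff (hence regular, $T_1$ and normal) and, being compact, Menger, hence star Menger. Next I would fix a partition $\omega_1=E\cup O$ into two disjoint uncountable pieces and set $C=\{d_\alpha:\alpha\in E\}$, and then verify two things about $C$.

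First, $C$ is regular open in $X$: every point of $C$ is isolated, so $C$ is open; since $C$ is infinite, every neighbourhood of $\infty$ meets $C$, so $\overline{C}=C\cup\{\infty\}$; and no neighbourhood of $\infty$ lies inside $C\cup\{\infty\}$, because such a neighbourhood is cofinite while $X\setminus C\supseteq\{d_\alpha:\alpha\in O\}$ is infinite, so the interior of $\overline{C}$ is exactly $C$. Second, $C$ is not star Menger: with the subspace topology $C$ is a discrete space of size $\omega_1$, and for the cover $\mathcal{U}$ of $C$ by singletons one has $St(M,\mathcal{U})=M$ for every $M\subseteq C$, so any star kernel of $\mathcal{U}$ must be all of $C$ --- but an uncountable discrete space is not Menger (it is not even Lindel\"{o}f). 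Thus $C$ witnesses the failure of the inheritance.

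The construction is routine and there is no real obstacle; the only step deserving a moment's care is the regular-openness of $C$, which is why one insists that the complement $X\setminus C$ be infinite (so that $\infty$ is a boundary point of $C$ but not an interior point of $\overline{C}$). This is also the reason a compact ordinal space like $[0,\omega_1]$ would not serve: its natural uncountable discrete subspaces --- e.g.\ the successor ordinals --- are dense, hence not regular open.
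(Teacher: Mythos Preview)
Your proof is correct and follows essentially the same idea as the paper's: the paper uses the one-point \emph{Lindel\"{o}fication} $L=D(\omega_1)\cup\{\infty\}$ rather than the one-point compactification, but in both cases one partitions $\omega_1$ into two uncountable pieces and takes one piece as an uncountable discrete regular open subset that fails to be star Menger. Your choice of the compactification even yields a slightly stronger (compact, hence normal) ambient space, and your verification that $C$ is regular open is more detailed than the paper's.
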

\begin{proof}
    Let $L=D(\omega_1)\cup\{\infty\}$ be the one-point Lindelolification of the discrete space $D(\omega_1)$ of size $\omega_1$. Then, $L$ is a star Menger space. Put $D(\omega_1)=\{d_\alpha:\alpha\in\omega_1\}$ and take a partition of $\omega_1=A\cup B$ with $|A|=|B|=\omega_1$. Let $D(A)=\{d_\alpha:\alpha\in A\}$. Note that $D(A)$ is a regular open subset of $L$ and it is not star Menger as it is an uncountable discrete subset of $L$.
\end{proof}

Now, regarding products, it turns out that the star Menger property need not be preserved by products, not even by finite products. In fact, we will see that there are spaces with stronger properties than star Menger property whose product is not star Menger. We start by considering countably compactness. The construction of these spaces is standard and well-known. For convenience of the reader, we recall such a construction.

\begin{proposition}\label{products with cc}
     There are two countably compact spaces (and hence, star Menger spaces) whose product is not star Menger.
\end{proposition}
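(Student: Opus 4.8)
The classical construction is to build two countably compact subspaces $X_0,X_1$ of $\beta\omega$ (equivalently, of $\{0,1\}^{\mathfrak c}$) whose union is all of $\beta\omega$ and whose intersection is exactly $\omega$, so that the diagonal of the product $X_0\times X_1$ is a closed copy of the countable discrete set $\omega$ that fails to be starred by any ``small'' subspace. First I would recall the standard transfinite recursion: enumerate all countably infinite subsets of $\{0,1\}^{\mathfrak c}$ (there are $\mathfrak c$ of them, using $\mathfrak c^{\aleph_0}=\mathfrak c$ under the usual bookkeeping, or just work inside $\beta\omega$ where this is automatic) and at stage $\alpha<\mathfrak c$ choose a complete accumulation point and distribute it together with one already-chosen point into $X_0$ and $X_1$ alternately, keeping $|X_i|\le|\alpha|+\omega$ and keeping $\omega\subseteq X_0\cap X_1$. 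At the end $X_0$ and $X_1$ are countably compact (every countable subset of each has an accumulation point that was put into that space) and $X_0\cap X_1=\omega$. Since every countably compact space is star finite (by Fleischman's theorem quoted in the introduction) and hence trivially star Menger, both $X_0$ and $X_1$ are star Menger.

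\smallskip
The second half of the argument is to exhibit the open cover of $Z=X_0\times X_1$ witnessing the failure of star Menger. The key topological fact is that the diagonal-type set $\Delta=\{(n,n):n\in\omega\}$ is closed and discrete in $Z$: the points of $X_0\cap X_1=\omega$ are isolated in each factor, so each $(n,n)$ is isolated in $Z$, and a point $(p,q)\in Z$ with $p\ne q$ (as points of $\beta\omega$) has disjoint basic neighbourhoods $U\ni p$, $V\ni q$ with $U\cap V\cap\omega$ empty or finite, so $U\times V$ meets $\Delta$ in a finite set. Now take the open cover $\mathcal U$ of $Z$ consisting of: all singletons $\{(n,n)\}$ for $n\in\omega$, together with, for each $(x,y)\in Z\setminus\Delta$, a basic open box $W_{(x,y)}$ around it meeting $\Delta$ finitely. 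Any element of $\mathcal U$ meets $\Delta$ in a finite set, so for a subspace $M\subseteq Z$ we have $St(M,\mathcal U)\cap\Delta$ controlled by which singletons $\{(n,n)\}$ lie in $M$ or are hit by a box through $M$; one then checks $St(M,\mathcal U)\supseteq\Delta$ forces $M\cap\Delta$ to be cofinite in $\Delta$ (each $(n,n)$ with $(n,n)\notin M$ must be picked up by some $W_{(x,y)}$ through $M$, but each such box absorbs only finitely many diagonal points, and $M$ Menger is in particular Lindel\"of so uses only countably many boxes — still, a cleaner route is: if $M$ is Menger then $M\cap\Delta$ is Menger, hence, being closed discrete, finite). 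Thus no Menger $M$ can satisfy $St(M,\mathcal U)=Z$, since $\Delta\setminus M$ is infinite and no single box covers an infinite piece of it. Hence $Z$ is not star Menger.

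\smallskip
The main obstacle, and the step I would spend the most care on, is the last implication in the previous paragraph: verifying that $St(M,\mathcal U)\neq Z$ for \emph{every} Menger $M$, not merely for $M$ inside $\Delta$. One must rule out $M$ reaching far-away diagonal points through the boxes $W_{(x,y)}$. The clean way is to refine the cover: arrange the boxes $W_{(x,y)}$ so that each one meets $\Delta$ in \emph{at most one} point (possible because $\Delta$ is closed discrete), so that $St(M,\mathcal U)\cap\Delta = (M\cap\Delta)\cup\{(n,n): \text{some }W_{(x,y)}\text{ with }(n,n)\in W_{(x,y)}\text{ meets }M\}$, and the second set is contained in $\{(n,n): (n,n)$ is the unique diagonal point of a box meeting $M\}$. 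Since $M$ is Menger it is Lindel\"of, and the boxes-meeting-$M$ that are \emph{needed} form an open cover of $M$ from which countably many suffice; combined with $M\cap\Delta$ finite, $St(M,\mathcal U)\cap\Delta$ is countable — wait, that is not yet a contradiction since $\Delta$ is itself countable. So the genuinely correct finish is the sharper one: make each box meet $\Delta$ in at most one point and also require that the unique diagonal point of each box $W_{(x,y)}$ is ``pinned'' to $(x,y)$ in a way making the map $(x,y)\mapsto$ its diagonal point finite-to-one is not enough either. The actual standard argument uses that $X_0\times X_1$ has a closed discrete subset $\Delta$ of size $\aleph_0$ but, more importantly, that $\Delta$ is not contained in the star of any countably compact (indeed Lindel\"of) subspace for the cover by singletons-on-$\Delta$-plus-$\Delta$-missing-boxes; equivalently $Z$ is not even star Lindel\"of, which is the form in which this example is classically stated. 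I would therefore present the cover so that \emph{every} member misses all but finitely many points of $\Delta$ \emph{and} show directly that a star kernel $M$ with $St(M,\mathcal U)=Z$ would have to have $\overline{M}\cap\Delta$ infinite, contradicting that $\overline{M}$ inherits countable compactness/Lindel\"ofness from $M$ and that $\Delta$ is closed discrete. Filling in this bookkeeping carefully is the crux; the recursion building $X_0,X_1$ is entirely routine.
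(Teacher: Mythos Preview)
Your construction has a fatal flaw: by working inside $\beta\omega$ with $X_0\cap X_1=\omega$, you have made the product $X_0\times X_1$ \emph{separable}, and hence star Menger. Indeed, $\omega$ is dense in $\beta\omega$, so $\omega$ is dense in each $X_i$, and therefore the countable set $\omega\times\omega$ is dense in $X_0\times X_1$. Your diagonal $\Delta=\{(n,n):n\in\omega\}$ is countable, and a countable closed discrete set is no obstruction whatsoever to the star Menger property (it is itself Menger). You noticed this yourself midway through (``wait, that is not yet a contradiction since $\Delta$ is itself countable''), and the subsequent attempts to salvage the argument do not succeed; no choice of cover can witness failure of star Menger in a separable space.

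The paper avoids this by taking $D$ to be a discrete space of size $\mathfrak{c}$ and working in $\beta(D)$. The same transfinite closing-off produces countably compact $X,Y\subseteq\beta(D)$ with $X\cap Y=D$, and now the diagonal $\Delta=\{(d,d):d\in D\}$ is a \emph{clopen} discrete subset of $X\times Y$ of size $\mathfrak{c}$. Since the star Menger property is inherited by clopen subsets (Proposition~\ref{On clopen sets}), and an uncountable discrete space is not star Menger, the product is not star Menger. The essential difference from your attempt is purely the cardinality of the discrete set $D$: you need $|D|>\aleph_0$ so that the clopen diagonal is too large to be Menger. Replace $\omega$ by $D(\mathfrak{c})$ throughout your recursion and the argument becomes both correct and much shorter than what you wrote, since the clopen-inheritance fact does all the work in the second half.
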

\begin{proof}
     Let us define two subspaces $X, Y$ of $\beta(D)$, with $D$ being the discrete space of size $\mathfrak{c}$, such that $X\cap Y=D$, $X\cup Y=\beta(D)$ and $X, Y$ are countably compact. Let $X_0=D$. By transfinite induction, it is easy to define, for each $\alpha\in\omega_1$, $$X_\alpha=\bigcup_{\gamma<\alpha}X_\gamma\cup f\left[ \left[ \bigcup_{\gamma<\alpha}X_\gamma \right]^\omega\right]$$ where $f:\left[\beta(D)\right]^\omega\rightarrow\beta(D)$ is a function assigning to each infinite countable subset $A$ of $\beta(D)$ a limit point of $A$ in $\beta(D)$. Let $X=\bigcup_{\alpha\in\omega_1}X_\alpha$ and $Y=D\cup[\beta(D)\setminus X]$. Thus, $X\cap Y=D$ and $X\cup Y=\beta(D)$. In addition, by construction, it is easy to show that $X$ and $Y$ are countably compact and then, star Menger spaces. However, $X\times Y$ is not star Menger since it contains the discrete clopen set $\Delta=\{(x,x):x\in D\}$ that is not star Menger.
\end{proof}

The spaces $X$ and $Y$ used in Proposition \ref{products with cc} show that not even the product of countably compact spaces need not be star Menger. Next, we will see that the product of a countably compact space with a Lindel\"{o}f space need not be star Menger either.

\begin{proposition}\label{products with L and CC}
     There exist a countably compact space and a Lindel\"{o}f space whose product is not star Menger.
\end{proposition}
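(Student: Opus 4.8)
The plan is to use the classical example of a countably compact space whose product with a Lindel\"{o}f space fails even a weak covering property, namely $[0,\omega_1)$ (with the order topology) as the countably compact factor, and a suitable Lindel\"{o}f space as the second factor. Recall that $[0,\omega_1)$ is countably compact but not compact, and that its product with a Lindel\"{o}f space can be far from well-behaved. First I would set $X=[0,\omega_1)$ and let $Y$ be the one-point Lindel\"{o}fication $L(D(\omega_1))=D(\omega_1)\cup\{\infty\}$ of the discrete space $D(\omega_1)=\{d_\alpha:\alpha\in\omega_1\}$, which is Lindel\"{o}f (every open set containing $\infty$ omits only countably many points). The target is to exhibit an open cover of $X\times Y$ for which no Menger subspace can serve as a star kernel.

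The key step is the choice of the open cover. I would consider $\mathcal{U}=\{[0,\omega_1)\times\{d_\alpha\}:\alpha\in\omega_1\}$ together with sets of the form $[0,\beta)\times Y$ for $\beta\in\omega_1$ (or, more simply, the cover $\mathcal{U}=\{[0,\alpha]\times\{d_\alpha\}:\alpha\in\omega_1\}\cup\{([0,\omega_1)\setminus\{\alpha\})\times Y: \alpha\in\omega_1\}$, arranged along the diagonal so that the points $\langle\alpha,d_\alpha\rangle$ are "hard" to hit). The idea mirrors the proof of Example~\ref{star Menger not star sigma-compact}: if $M\subseteq X\times Y$ is a Menger subspace, then $M$ is Lindel\"{o}f, so its projection $\pi_X[M]$ to $[0,\omega_1)$ is a Lindel\"{o}f subspace of $[0,\omega_1)$ and hence bounded — there is $\gamma<\omega_1$ with $\pi_X[M]\subseteq[0,\gamma]$. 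Similarly $M$ can meet only countably many of the slices $[0,\omega_1)\times\{d_\alpha\}$ (for an uncountable collection of $\alpha$, say $\alpha>\gamma$ and $M\cap([0,\omega_1)\times\{d_\alpha\})=\emptyset$). Then for any $\alpha>\gamma$ outside that countable set, the point $\langle\alpha,d_\alpha\rangle$ lies only in $[0,\alpha]\times\{d_\alpha\}$ among the "small" members of $\mathcal{U}$, and that set misses $M$; one arranges the rest of the cover so that no other member of $\mathcal{U}$ containing $\langle\alpha,d_\alpha\rangle$ meets $M$ either, so $\langle\alpha,d_\alpha\rangle\notin St(M,\mathcal{U})$. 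Hence $X\times Y$ is not star Menger.

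I would carry out the steps in this order: (1) recall that $X=[0,\omega_1)$ is countably compact and $Y=L(D(\omega_1))$ is Lindel\"{o}f; (2) invoke the fact that every Menger space is Lindel\"{o}f, so any Menger subspace $M$ of $X\times Y$ projects to a bounded (Lindel\"{o}f) subset of $[0,\omega_1)$; (3) write down the specific open cover $\mathcal{U}$ and verify it is a cover; (4) given a Menger $M$, produce $\gamma<\omega_1$ bounding $\pi_X[M]$ and a countable set of "bad" indices, then pick an index $\alpha$ avoiding both, and show $\langle\alpha,d_\alpha\rangle\notin St(M,\mathcal{U})$; (5) conclude. The main obstacle I anticipate is designing $\mathcal{U}$ so that the diagonal points $\langle\alpha,d_\alpha\rangle$ are covered \emph{only} by sets whose star with respect to $\mathcal{U}$ cannot reach them once $M$ avoids the corresponding slices — i.e. making sure the "large" members of $\mathcal{U}$ (those of the form $([0,\omega_1)\setminus\{\alpha\})\times Y$ or $[0,\beta)\times Y$) do not accidentally star-cover $\langle\alpha,d_\alpha\rangle$ through a point of $M$. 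This is a purely combinatorial bookkeeping issue and is handled exactly as in the proof of Example~\ref{star Menger not star sigma-compact}, using that the only member of $\mathcal{U}$ containing $\langle\alpha,d_\alpha\rangle$ with nonempty intersection with $M$ would have to meet the slice $[0,\omega_1)\times\{d_\alpha\}$, which $M$ avoids by construction.
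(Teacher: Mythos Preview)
Your choice of spaces is essentially the paper's (the paper uses $[0,\mathfrak{c})$ and the one-point Lindel\"{o}fication of $D(\mathfrak{c})$, but $\omega_1$ works just as well), and the key observation that any Menger $M\subseteq X\times Y$ has $\pi_X[M]$ bounded in $[0,\omega_1)$ is exactly the right lever. However, there is a genuine gap in the remainder of the argument, and it is not mere bookkeeping.

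First, the claim that a Menger subspace $M$ of $X\times Y$ meets only countably many slices $[0,\omega_1)\times\{d_\alpha\}$ is false: take $M=\{0\}\times Y$. This set is homeomorphic to $Y$, which is a Lindel\"{o}f $P$-space and hence Menger, yet it meets every slice. So you cannot rely on avoiding slices. Second, neither of your proposed covers works. In both of them the ``large'' members (the sets $[0,\beta)\times Y$ or $([0,\omega_1)\setminus\{\alpha\})\times Y$) contain the diagonal point $\langle\alpha,d_\alpha\rangle$ for suitable indices and simultaneously meet any nonempty $M\subseteq[0,\gamma]\times Y$; hence $\langle\alpha,d_\alpha\rangle\in St(M,\mathcal{U})$ regardless. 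The reference to Example~\ref{star Menger not star sigma-compact} is misleading: there the hard point is $\langle\omega_2,d_\alpha\rangle$, whose first coordinate $\omega_2$ is explicitly excluded from the single large set $[0,\omega_2)\times Y$; you have no such top point in $[0,\omega_1)$.

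The paper's cover avoids this problem by making the large sets shrink in the $Y$-direction as they grow in the $X$-direction. Concretely, set
\[
U_\alpha=[0,\alpha]\times\bigl(\{d_\beta:\beta\geq\alpha\}\cup\{\infty\}\bigr),\qquad V_\alpha=(\alpha,\omega_1)\times\{d_\alpha\},
\]
and let $\mathcal{U}=\{U_\alpha:\alpha<\omega_1\}\cup\{V_\alpha:\alpha<\omega_1\}$. One checks that the \emph{only} member of $\mathcal{U}$ containing $\langle\alpha+1,d_\alpha\rangle$ is $V_\alpha$ (for $U_\beta$ one would need $\alpha+1\leq\beta\leq\alpha$). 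Now if $M$ is Menger and $\pi_X[M]\subseteq[0,\gamma]$, then for $\alpha>\gamma$ we have $V_\alpha\cap M=\emptyset$ purely because $V_\alpha\subseteq(\gamma,\omega_1)\times Y$; no slice-avoidance is needed. This is the missing idea: design the cover so that the unique set containing the witness point is disjoint from $[0,\gamma]\times Y$, which is all you actually control.
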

\begin{proof}
    Let $X=[0,\mathfrak{c})$ with the usual order topology and $Y=D(\mathfrak{c})\cup\{\infty\}$ be the one-point Lindel\"{o}fication of the discrete space $D(\mathfrak{c})$ of size $\mathfrak{c}$. Let us show that the product $X\times Y$ is not star Menger. Put $D(\mathfrak{c})=\{d_\alpha:\alpha\in\mathfrak{c}\}$. For each $\alpha\in\mathfrak{c}$ let $U_\alpha=[0,\alpha]\times(\{d_\beta:\beta\geq\alpha\}\cup\{\infty\})$ and $V_\alpha=(\alpha,\mathfrak{c})\times\{d_\alpha\}$. Then, the collection $\mathcal{U}=\{U_\alpha:\alpha\in\mathfrak{c}\}\cup\{V_\alpha:\alpha\in\mathfrak{c}\}$ is an open cover of $X\times Y$. Let $M\subseteq X\times Y$ be any Menger subspace. Since $\pi_X[M]$ is Menger, then there is $\gamma\in\mathfrak{c}$ such that $M\cap[(\gamma,\mathfrak{c})\times Y]=\emptyset$. Hence, if $\alpha>\gamma$, $V_\alpha\cap M=\emptyset$ and thus, $\langle\alpha+1, d_\alpha\rangle\notin St(M,\mathcal{U})$ as $V_\alpha$ is the only element of $\mathcal{U}$ containing to $\langle\alpha+1, d_\alpha\rangle$. It follows that $X\times Y$ is not star Menger.
\end{proof}

Besides last two propositions, in \cite{Hiremath}, Hiremath gave an example of two Lindel\"{o}f spaces whose product is not star Lindel\"{o}f. Therefore, the product of Lindel\"{o}f spaces need not be star Menger either. 

\begin{proposition}[\cite{Hiremath}]\label{products with Lindelofs}
     There are two Lindel\"{o}f spaces whose product is not star Menger.
\end{proposition}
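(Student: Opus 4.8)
The plan is to reduce the statement to the implication, recorded in Figure \ref{relationships among star-P properties} (and immediate from basic fact (b) of the introduction, since the Menger property implies the Lindel\"of property), that every star Menger space is star Lindel\"of. Consequently, it suffices to produce two Lindel\"of spaces whose product fails to be star Lindel\"of, and this is exactly the content of Hiremath's example in \cite{Hiremath}.

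Concretely, I would recall from \cite{Hiremath} the two Lindel\"of spaces $X$ and $Y$ together with the open cover $\mathcal{U}$ of $X\times Y$ for which no Lindel\"of subspace $Z\subseteq X\times Y$ satisfies $St(Z,\mathcal{U})=X\times Y$; this is precisely the assertion that $X\times Y$ is not star Lindel\"of, as established there. The verification that each of $X$ and $Y$ is Lindel\"of is carried out in that reference, so I would simply cite it rather than reproduce it.

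Finally, combining these observations: if $X\times Y$ were star Menger, then it would be star Lindel\"of, contradicting the property of Hiremath's cover $\mathcal{U}$. Hence $X\times Y$ is not star Menger, and $X,Y$ are the desired pair of Lindel\"of spaces. I do not anticipate any genuine obstacle here, since all the substance is imported from \cite{Hiremath}; the only point requiring care is to make sure that the example there really defeats star Lindel\"ofness of the product --- not merely Lindel\"ofness --- but this is exactly what is proved in that paper, whence nothing further is needed.
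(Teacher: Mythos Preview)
Your proposal is correct and matches the paper's approach exactly: the paper does not give a formal proof block for this proposition but simply notes in the preceding text that Hiremath's example from \cite{Hiremath} yields two Lindel\"of spaces whose product is not star Lindel\"of, and since star Menger implies star Lindel\"of, the product cannot be star Menger. Your write-up just makes this reasoning explicit.
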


On one hand, Proposition \ref{products with cc} shows that, in particular, the product of a star Menger space and a countably compact space need not be star Menger. On the other hand, Proposition \ref{products with L and CC} shows that the product of a star Menger space and a Lindel\"{o}f space need not be star Menger. In contrast to these results, we do have that the product of a star Menger space with a compact space is star Menger. In \cite{AJW}, the authors call a property $\mathcal{P}$ compactly productive if whenever $X$ has property $\mathcal{P}$ and $Y$ is compact, then $X\times Y$ has property $\mathcal{P}$. Also, they showed that if $\mathcal{P}$ is a compactly productive property, then the star-$\mathcal{P}$ property is also compactly productive. It follows that the star Menger property is compactly productive. Moreover, since the countable union of Menger spaces is Menger, it readily follows that the star Menger property is also preserved by countable unions. So, we obtain

\begin{proposition}\label{compactly productive}
    If $X$ is a star Menger space and $Y$ is $\sigma$-compact, then the product $X\times Y$ is a star Menger space. In particular, the product of a star Menger space and a compact space is star Menger.
\end{proposition}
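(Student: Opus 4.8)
\textbf{Proof proposal for Proposition \ref{compactly productive}.}

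The plan is to reduce the product statement to two facts already available in the excerpt: (1) the star-$\mathcal{P}$ operation preserves compact productivity, a result attributed to \cite{AJW}, so it suffices to know that the Menger property itself is compactly productive; and (2) the star Menger property is preserved by countable unions, which follows because the Menger property is preserved by countable unions (stated in the excerpt just before Proposition \ref{compactly productive}) together with the observation in Section \ref{section on the behaviour of the star Menger property} that the union of a star-$\mathcal{P}$ cover-refinement argument goes through for countable-union-closed $\mathcal{P}$. So the first step is to invoke the fact (mentioned in the excerpt, citing \cite{T}) that if $M$ is Menger and $K$ is compact, then $M\times K$ is Menger; hence the Menger property is compactly productive, and by the \cite{AJW} result the star Menger property is compactly productive as well. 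This already gives the ``in particular'' clause: if $Y$ is compact and $X$ is star Menger, then $X\times Y$ is star Menger.

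For the general $\sigma$-compact case, I would write $Y=\bigcup_{n\in\omega}K_n$ with each $K_n$ compact, and then $X\times Y=\bigcup_{n\in\omega}(X\times K_n)$. Each $X\times K_n$ is star Menger by the compact case just established. It then remains to observe that a countable union of star Menger subspaces of a space is star Menger. The cleanest way to see this: given an open cover $\mathcal{U}$ of $X\times Y$, for each $n$ restrict $\mathcal{U}$ to a cover of $X\times K_n$, pick a Menger subspace $M_n\subseteq X\times K_n$ with $X\times K_n\subseteq St(M_n,\mathcal{U}\restriction_{X\times K_n})\subseteq St(M_n,\mathcal{U})$, and set $M=\bigcup_{n\in\omega}M_n$. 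Since a countable union of Menger subspaces is Menger, $M$ is a Menger subspace of $X\times Y$, and $St(M,\mathcal{U})\supseteq\bigcup_n St(M_n,\mathcal{U})\supseteq\bigcup_n(X\times K_n)=X\times Y$, so $St(M,\mathcal{U})=X\times Y$. This shows $X\times Y$ is star Menger.

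I expect no real obstacle here — every ingredient is either quoted in the excerpt (\cite{AJW} on compact productivity of star-$\mathcal{P}$, \cite{T} on Menger times compact, closure of Menger under countable unions) or is a routine bookkeeping argument with stars. The only point requiring a line of care is verifying that $St(M_n,\mathcal{U}\restriction_{X\times K_n})\subseteq St(M_n,\mathcal{U})$, which is immediate since $\mathcal{U}\restriction_{X\times K_n}$ consists of traces of members of $\mathcal{U}$ and a trace meeting $M_n$ forces the original member to meet $M_n$ as well; and the monotonicity $St(A,\mathcal{U})\subseteq St(B,\mathcal{U})$ for $A\subseteq B$, noted in the preliminaries. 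Thus the proposition follows without additional hypotheses.
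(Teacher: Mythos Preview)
Your proposal is correct and follows essentially the same route as the paper: first obtain the compact case from the \cite{AJW} transfer principle together with the fact (cited from \cite{T}) that Menger is compactly productive, and then pass to the $\sigma$-compact case by writing $Y=\bigcup_n K_n$ and using that a countable union of Menger subspaces is Menger. The paper merely asserts that the star Menger property is preserved by countable unions and leaves the rest implicit, whereas you spell out the star-kernel bookkeeping explicitly; but the underlying argument is the same.
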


We can even get a small generalization of last proposition.

\begin{proposition}\label{small generalization}
    If $X$ is a star Menger space and $Y$ is a Lindel\"{o}f locally ($\sigma$-)compact space, then the product $X\times Y$ is star Menger.
\end{proposition}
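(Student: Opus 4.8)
The plan is to reduce to Proposition~\ref{compactly productive} by a locally-finite-cover argument, exploiting that a Lindel\"of locally $\sigma$-compact space is $\sigma$-compact on a countable "core" and thereafter behaves like a sum of $\sigma$-compact pieces. First I would recall that if $Y$ is Lindel\"of and locally $\sigma$-compact, then $Y = \bigcup_{n\in\omega} Y_n$ where each $Y_n$ is a $\sigma$-compact open subset: cover $Y$ by open sets with $\sigma$-compact closures (or with $\sigma$-compact supersets) and extract a countable subcover by the Lindel\"of property. So $Y$ itself is $\sigma$-compact, and in that case the statement is \emph{exactly} Proposition~\ref{compactly productive}. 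Thus the "generalization" is only apparent unless one means something weaker by \emph{locally compact} (e.g. not requiring the witnessing neighbourhoods to be open, or working with a $k$-ish variant); under the standard reading, I would simply observe $Y$ is $\sigma$-compact and invoke Proposition~\ref{compactly productive} directly.

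If instead the intended content is genuinely more general — say $Y$ Lindel\"of with each point having a ($\sigma$-)compact (not necessarily open) neighbourhood, so that $Y$ need not be locally compact in the open sense but is still covered by countably many $\sigma$-compact sets $\{Y_n : n\in\omega\}$ — the argument is the same: each $Y_n$ is $\sigma$-compact, $X\times Y_n$ is star Menger by Proposition~\ref{compactly productive}, and $X\times Y = \bigcup_{n\in\omega}(X\times Y_n)$. Then I would use that the star Menger property is preserved by \emph{countable unions} (noted just before Proposition~\ref{compactly productive}, since a countable union of Menger subspaces is Menger): given an open cover $\mathcal{U}$ of $X\times Y$, it restricts to an open cover of each $X\times Y_n$; pick a Menger $M_n\subseteq X\times Y_n$ with $St(M_n,\mathcal{U}\!\!\restriction_{X\times Y_n})=X\times Y_n$, set $M=\bigcup_n M_n$, which is Menger, and check $St(M,\mathcal{U})\supseteq\bigcup_n St(M_n,\mathcal{U})\supseteq\bigcup_n (X\times Y_n)=X\times Y$. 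The only subtlety is that a star kernel of the restricted cover is a star kernel "inside" $X\times Y_n$ only; but since $X\times Y_n$ is \emph{open} in $X\times Y$, any $U\in\mathcal{U}$ meeting $M_n$ that is needed to cover a point of $X\times Y_n$ may be replaced by $U\cap(X\times Y_n)$, and stars computed with respect to $\mathcal{U}$ are at least as large, so this causes no loss.

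The main obstacle — and the thing to be careful about — is precisely the interplay between "open $F_\sigma$ pieces" and the star operation: a star kernel witnessing star Menger for the subspace $X\times Y_n$ need not witness it for $X\times Y$ unless the pieces are open (so that Proposition~\ref{On clopen sets}-style reasoning about open $F_\sigma$ subsets applies in the reverse direction). This is why local \emph{$\sigma$-compactness} with \emph{open} witnesses is the natural hypothesis: it guarantees the $Y_n$ are open, hence the $X\times Y_n$ are open $F_\sigma$ in $X\times Y$, and one can legitimately paste the local Menger kernels into a global one. I would therefore organize the write-up as: (1) $Y$ Lindel\"of locally $\sigma$-compact $\Rightarrow$ $Y$ is a countable union of open $\sigma$-compact subsets (equivalently, $Y$ is $\sigma$-compact); (2) apply Proposition~\ref{compactly productive} to each factor (or directly to $Y$); (3) if needed, paste via the countable-union stability of star Menger, using openness of the pieces to transfer stars. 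I expect step (1) to be the only place a reader might object, so I would state it as a one-line lemma with the Lindel\"of-subcover proof spelled out.
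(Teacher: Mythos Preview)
Your proposal is correct. In fact, the paper explicitly acknowledges your primary route: immediately after the proposition it records the remark that a locally ($\sigma$-)compact space is Lindel\"of if and only if it is $\sigma$-compact, and says the proposition can be obtained from this observation together with Proposition~\ref{compactly productive}. So your ``shortcut'' is exactly the alternative the authors had in mind.

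The paper's written-out proof takes a slightly different (more hands-on) path: given an open cover $\mathcal{U}$ of $X\times Y$, it picks for each $y\in Y$ an open $V_y$ with $\overline{V_y}$ ($\sigma$-)compact, applies Proposition~\ref{compactly productive} to get $X\times\overline{V_y}$ star Menger, extracts a Menger kernel $M_y$ for the trace of $\mathcal{U}$ on that closed slab, then uses Lindel\"ofness of $Y$ to pass to countably many $y_n$ and sets $M=\bigcup_n M_{y_n}$. Two remarks on the comparison: first, the paper uses \emph{closed} pieces $\overline{V_y}$, which shows your worry about needing openness of the $Y_n$ is unnecessary --- for \emph{any} subspace $Z$, the trace $\{U\cap Z:U\in\mathcal{U}\}$ is an open cover of $Z$, and $St(M,\mathcal{U}\!\!\restriction_Z)\subseteq St(M,\mathcal{U})$ always holds, so the pasting goes through without assuming the pieces are open. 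Second, your approach is tidier and avoids touching the cover until the very end, at the cost of first proving the (easy) lemma that $Y$ is $\sigma$-compact; the paper's approach does the same work but interleaved with the star argument. Both buy exactly the same conclusion with the same dependence on Proposition~\ref{compactly productive}.
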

\begin{proof}
    Let $\mathcal{U}$ be an open cover of $X\times Y$. For each $y\in Y$, fix an open neighbourhood $V_y$ of $y$ such that $\overline{V_y}$ is ($\sigma$-)compact. By Proposition \ref{compactly productive}, $X\times\overline{V_y}$ is star Menger, for each $y\in Y$. Hence, for each $y\in Y$, there is a Menger subspace $M_y$ of $X\times\overline{V_y}$ such that $X\times\overline{V_y}\subseteq St(M_y,\mathcal{U})$. On the other hand, since $Y$ is a Lindel\"{o}f space and the collection $\{V_y:y\in Y\}$ is an open cover of $Y$, there exists a countable subcover $\mathcal{V}$ of $\{V_y:y\in Y\}$. Let us denote $\mathcal{V}=\{V_{y_n}:n\in\omega\}$ and define $M=\bigcup\{M_{y_n}: n\in\omega\}$. Then $M$ is a Menger subspace of $X\times Y$ and $X\times Y= St(M,\mathcal{U})$. Thus, $X\times Y$ is star Menger.
\end{proof}

It is worth mentioning that Proposition \ref{small generalization} can also be obtained by noting the following immediate remark.

\begin{remark}
    Let $X$ be a locally ($\sigma$-)compact space. Then $X$ is Lindel\"{o}f if and only if $X$ is $\sigma$-compact.
\end{remark}

Regarding to continuous mappings, in \cite{AJW}, it was observed that if $\mathcal{P}$ is a topological property that is preserved under continuous images, then the star-$\mathcal{P}$ property is also preserved under continuous images. Since the Menger property is preserved by continuous images (see \cite{W}), then we obtain the following,

\begin{proposition}
    The continuous image of a star Menger space is star Menger.
\end{proposition}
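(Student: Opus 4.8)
The statement to prove is: the continuous image of a star Menger space is star Menger. The plan is to combine two facts mentioned just above in the excerpt: first, that the Menger property is preserved under continuous images (\cite{W}); and second, the general principle from \cite{AJW} that if $\mathcal{P}$ is preserved under continuous images, then so is star-$\mathcal{P}$. Since the second principle is cited as known, one could simply invoke it. However, for completeness I would also spell out the direct argument, since it is short and self-contained.

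First I would set up notation: let $f \colon X \to Z$ be a continuous surjection (we may assume surjectivity by restricting the codomain, since star Menger is a property of the space itself) with $X$ star Menger, and let $\mathcal{U}$ be an open cover of $Z$. Then $f^{-1}(\mathcal{U}) = \{f^{-1}(U) : U \in \mathcal{U}\}$ is an open cover of $X$ by continuity. Applying the star Menger property of $X$ to $f^{-1}(\mathcal{U})$, we obtain a Menger subspace $M \subseteq X$ with $St(M, f^{-1}(\mathcal{U})) = X$.

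Next I would set $N = f[M]$. Since $M$ is Menger and the Menger property is preserved under continuous images (applying this to the restriction $f\!\restriction_M \colon M \to N$), the subspace $N$ is Menger in $Z$. It then remains to check that $St(N, \mathcal{U}) = Z$. The key observation is that for any $U \in \mathcal{U}$, if $f^{-1}(U) \cap M \neq \emptyset$ then $U \cap N \neq \emptyset$; indeed if $x \in f^{-1}(U) \cap M$ then $f(x) \in U \cap f[M] = U \cap N$. Hence every member of $f^{-1}(\mathcal{U})$ meeting $M$ maps into a member of $\mathcal{U}$ meeting $N$. Now given arbitrary $z \in Z$, pick $x \in X$ with $f(x) = z$; since $St(M, f^{-1}(\mathcal{U})) = X$, there is $U \in \mathcal{U}$ with $x \in f^{-1}(U)$ and $f^{-1}(U) \cap M \neq \emptyset$, so $z = f(x) \in U$ and $U \cap N \neq \emptyset$, giving $z \in St(N, \mathcal{U})$. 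Thus $N$ is a Menger star kernel of $\mathcal{U}$, and $Z$ is star Menger.

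There is essentially no obstacle here: the argument is routine and mirrors the standard proof that star-$\mathcal{P}$ is preserved under continuous images whenever $\mathcal{P}$ is. The only mild point of care is the direction of the star containment — one must verify that stars \emph{shrink} appropriately under the image, which is the small lemma above ($f^{-1}(U)$ meeting $M$ forces $U$ to meet $f[M]$). Given that the paper has already invoked both ingredient facts (Menger is a continuous invariant, and the \cite{AJW} transfer principle), I expect the author's proof to be a one-line citation rather than the expanded version above.
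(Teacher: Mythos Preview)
Your proposal is correct and matches the paper's approach exactly: the paper's proof is indeed just the one-line citation you anticipated, combining the \cite{AJW} transfer principle with the fact that Menger is a continuous invariant. Your expanded direct argument is a faithful unpacking of that principle and contains no errors.
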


Now, we may wonder about inverse images of star Menger spaces under perfect, open or closed mappings. For the cases of open and closed mappings, this property is not preserved by taking inverse images as we see below.

\begin{proposition}
    The inverse image of a star Menger space under a continuous open and closed mapping need not be star Menger.
\end{proposition}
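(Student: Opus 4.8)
The statement only asks for one counterexample, so the plan is to exhibit a space $X$ that is not star Menger, a star Menger space $Y$, and a continuous map $f\colon X\to Y$ that is simultaneously open, closed and surjective. The guiding observation is that ``open and closed'' is a far weaker hypothesis than ``perfect'': it imposes no restriction whatsoever on the fibres, so we are free to collapse a badly non-star-Menger space onto a trivial (hence star Menger) one. Concretely, I would take $X=D(\omega_1)$, the discrete space of cardinality $\omega_1$, let $Y$ be a singleton (or, if one prefers a map that is not ``everything to a point'', the two-point discrete space together with the map sending one half of $D(\omega_1)$ to $a$ and the other half to $b$), and let $f$ be the (unique, resp. the described) map onto $Y$.

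The verification then splits into three short, independent steps. First, $Y$ is star Menger: it is compact, hence Menger, hence star Menger (as recorded in Figure~\ref{relationships among star-P properties}). Second, $f$ is a continuous open closed surjection: for a constant map onto a point this is immediate since the image of $\emptyset$ is $\emptyset$ and the image of any nonempty set is the whole (open and closed) target; for the two-point variant it is equally immediate because $Y$ is finite discrete, so every subset of $Y$ is open and closed, and $X$ is a topological sum of the two clopen halves. Third, $X=D(\omega_1)$ is not star Menger: apply it to the open cover $\mathcal{U}=\{\{d\}:d\in D(\omega_1)\}$; for any $M\subseteq X$ one has $St(M,\mathcal{U})=M$, so a star kernel for $\mathcal{U}$ would have to be all of $D(\omega_1)$, which is not Lindel\"of and hence not Menger.

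There is essentially no obstacle here; the only point worth stressing (and the only thing the reader might initially find surprising) is precisely that one need not be clever: unlike for perfect preimages, openness plus closedness alone are too weak to force the star Menger property to lift. For emphasis I would note that the map $f$ above is visibly not perfect, since the fibre $f^{-1}(y)$ over a point $y\in Y$ is a copy of $D(\omega_1)$ (or half of it), which is not compact — so this example is exactly in the gap between the two hypotheses, and it coexists with whatever positive statement about perfect preimages follows.
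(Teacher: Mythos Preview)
Your proposal is correct and is essentially identical to the paper's proof: the paper also takes $X=D(\omega_1)$, $Y=\{\infty\}$ a singleton, and $f$ the constant map, observing that $f$ is continuous, open and closed while $D(\omega_1)$ is an uncountable discrete space and hence not star Menger. Your additional remarks (the explicit cover by singletons, the two-point variant, and the observation that $f$ is not perfect) are helpful commentary but not needed for the argument.
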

\begin{proof}
    Let $D(\omega_1)$ be a discrete space of size $\omega_1$ and $\infty$ be a point not in $D(\omega_1)$. We consider the function $f:D(\omega_1)\rightarrow \{\infty\}$ such that for each $\alpha\in\omega_1$, $f(d_\alpha)=\infty$. It is clear that $f$ is a continuous, open, closed mapping and the space $\{\infty\}$ being a star Menger space. However, the inverse image $f^{-1}[\{\infty\}]$ which is $D(\omega_1)$ is not star Menger as it is an uncountable discrete space.
\end{proof}

It turns out that for perfect mappings we also have the same.

\begin{proposition}
    The star Menger property need not be preserved under preimages of perfect mappings.
\end{proposition}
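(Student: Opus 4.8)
The plan is to build a perfect map onto a star Menger space whose domain fails to be star Menger, by exploiting the well-known fact that star Menger (indeed star Lindel\"of, star countable, etc.) is not inherited by closed subspaces, together with the trick that a closed subspace is the perfect preimage of a point under the constant-on-that-fiber map only in trivial situations --- so instead I would glue a "bad" closed discrete subspace of size $\omega_1$ fiberwise over a star Menger base. Concretely, I would take $X_1 = \mathcal{A}_1 \cup (\omega_1 \times \omega)$, the space already constructed in the proof that star Menger is not inherited by regular closed subsets: it is a (normal) space that is \emph{not} star Menger, and $\mathcal{A}_1$ sits inside it as a closed discrete set of size $\omega_1$. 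Over this I want a perfect map $f : X_1 \to Y$ with $Y$ star Menger. The natural candidate is the map collapsing the "non-bad" part $\omega_1 \times \omega$ appropriately: map $(\alpha,n) \mapsto n \in \omega$ and $a \mapsto$ some point compactifying $\omega$, i.e.\ $f : X_1 \to \omega \cup \{\infty\} = \omega+1$ sending each $a_\alpha \mapsto \infty$. The target $\omega+1$ is compact, hence Menger, hence star Menger; and I would check that $f$ is continuous (a basic neighbourhood $B_{\beta,F}(a)$ maps into $\{\infty\}\cup(a\setminus F) \subseteq \{\infty\}\cup(\omega\setminus F)$, which is open in $\omega+1$) and that $f$ is closed with compact fibers.

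The key steps, in order: (1) recall the space $X_1$ and its relevant properties --- it is Tychonoff (indeed normal), it is \emph{not} star Menger (this is exactly the computation already performed in the proof of the proposition that star Menger is not inherited by regular closed subsets, via the cover $\mathcal{U} = \{U_\alpha:\alpha\in\omega_1\}\cup\{\omega_1\times\omega\}$), and $\mathcal{A}_1$ is a closed discrete subset of size $\omega_1$; (2) define $f : X_1 \to \omega+1$ by $f(\langle\alpha,n\rangle) = n$ and $f(a_\alpha) = \infty$ for all $\alpha$; (3) verify continuity: preimages of $\{n\}$ are $\omega_1\times\{n\}$, which is clopen in $X_1$, and preimages of $\{\infty\}\cup(\omega\setminus F)$ for finite $F$ contain, around each $a_\alpha$, the basic neighbourhood $B_{0,F}(a_\alpha)$, hence are open; (4) verify that fibers are compact: $f^{-1}(n) = \omega_1\times\{n\}$ --- wait, that is $\omega_1$ with the order topology, which is \emph{not} compact. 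This is the obstacle (see below); (5) conclude $X_1$ is a perfect preimage of the compact (hence star Menger) space $\omega+1$ while $X_1$ itself is not star Menger.

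The hard part will be step (4): with the obvious map the fibers $\omega_1\times\{n\}$ are not compact, so $f$ is not perfect. I see two repairs. The cleaner one is to change the base space of the construction: instead of $\omega_1\times\omega$ use $[0,\omega_1]\times\omega$ (or, fiberwise, replace each $\omega_1$ by $\omega_1+1$), so that $f^{-1}(n) = [0,\omega_1]\times\{n\}$ becomes compact; one must then re-examine that adding the top points does not accidentally make the total space star Menger --- but the cover witnessing non-star-Mengerness still works because the points $a_\alpha$ are only seen by the sets $U_\alpha$, and each $U_\alpha$ still meets $M$ for at most countably many $\alpha$ (the top points $\langle\omega_1,n\rangle$ lie in $\omega_1+1$ is compact so are harmless, they are covered by $\{[0,\omega_1]\times\{n\}\}$-type sets anyway). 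The alternative repair is to keep $X_1$ but precompose/redefine so the fiber over $\infty$ is $\mathcal{A}_1$ and fibers over $n$ are single points: map $\langle\alpha,n\rangle\mapsto$ the point of an auxiliary copy of $X_1$'s "horizontal structure", which leads in circles. So I would go with the first repair: present the total space $X$ as $[0,\omega_1]\times\omega$ together with an almost disjoint family $\mathcal{A}_1$ of size $\omega_1$ attached as before, define $f:X\to\omega+1$ collapsing $[0,\omega_1]\times\{n\}$ to $n$ and $\mathcal{A}_1$ to $\infty$, check $f$ is a perfect (continuous, closed, compact-fibered) surjection onto the compact space $\omega+1$, and check $X$ is not star Menger by the same cover argument as in the regular-closed proposition. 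That yields: the star Menger property is not preserved under perfect preimages.
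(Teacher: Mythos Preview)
Your approach has a fatal obstruction that no local repair can remove: your target space $\omega+1$ is \emph{compact}, and the perfect preimage of a compact space is always compact (this is the standard fact that perfectness is preserved under pullback of compactness). Hence if your $f:X\to\omega+1$ were genuinely perfect, $X$ would be compact, therefore Menger, therefore star Menger --- the opposite of what you want. Concretely, the place where perfectness fails in your construction is not the fibers over $n\in\omega$ (which your repair does make compact) but the fiber over $\infty$: you have $f^{-1}(\infty)=\mathcal{A}_1$, an uncountable closed discrete set, which is certainly not compact. Compactifying that fiber by adjoining a point would force the whole space to be compact, destroying the non-star-Menger witness.

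The paper's proof avoids this by mapping onto a \emph{non-compact} star Menger space. It takes $\Psi(\mathcal{A})$ for an uncountable almost disjoint family $\mathcal{A}$, forms $X=A(\Psi(\mathcal{A}))\setminus(\omega\times\{1\})$ (the Alexandroff duplicate with the isolated copies of $\omega$ deleted), and uses the first-coordinate projection $\pi:X\to\Psi(\mathcal{A})$. Here every fiber has one or two points, so compactness of fibers is trivial; $\pi$ is continuous and closed by the standard Alexandroff-duplicate arguments; and $\Psi(\mathcal{A})$ is separable, hence star Menger. Meanwhile $\mathcal{A}\times\{1\}$ is an uncountable clopen discrete subset of $X$, so $X$ is not star Menger by the clopen-heredity result. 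The moral is that the ``bad'' uncountable discrete set should be spread across uncountably many fibers, not collapsed into a single one.
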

\begin{proof}
    Let $\mathcal{A}$ be an uncountable almost disjoint family on $\omega$. We consider the space $X=A(\Psi(\mathcal{A}))\setminus(\omega\times\{1\})$ and let $\pi:X\rightarrow \Psi(\mathcal{A})$ be projection on the first coordinate. Then, $\pi$ is a perfect mapping. Now, since $\Psi(\mathcal{A})$ is separable, it is star Menger. However, its preimage $\pi^{-1}[\Psi(\mathcal{A})]$, which is $X$, is not star Menger. Indeed, note that the subset $\mathcal{A}\times\{1\}$ is an uncountable open and closed set which consist of isolated points in $X$. Thus, $\mathcal{A}\times\{1\}$ is not star Menger and, by Proposition \ref{On clopen sets}, we conclude that $X$ is not star Menger either.
\end{proof}

However, we can add an extra condition to perfect mappings so that preimages do preserve the star Menger property. Before of giving such a condition, let us show the following lemma.

\begin{lemma}\label{Menger preimages}
    The perfect preimage of a Menger space is Menger.
\end{lemma}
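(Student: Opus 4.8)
The plan is to show directly from the definitions that if $f \colon X \to Y$ is a perfect map onto a Menger space $Y$, then $X$ is Menger. Recall that a perfect map is a continuous closed surjection all of whose fibers $f^{-1}(y)$ are compact. First I would fix a sequence $\{\mathcal{U}_n : n \in \omega\}$ of open covers of $X$; since we have countably many covers to deal with, the natural move is to split $\omega$ into countably many infinite pieces $\omega = \bigsqcup_{k \in \omega} A_k$ and use the covers indexed by $A_k$ to handle ``the $k$-th level of compactness.'' Actually, for a single-level argument it is cleaner to proceed as follows: for each $n$ and each $y \in Y$, the fiber $f^{-1}(y)$ is compact, so there is a finite $\mathcal{V}_n^y \subseteq \mathcal{U}_n$ covering $f^{-1}(y)$. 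Since $f$ is closed, the set $W_n^y = Y \setminus f\big(X \setminus \bigcup \mathcal{V}_n^y\big)$ is an open neighbourhood of $y$ with $f^{-1}(W_n^y) \subseteq \bigcup \mathcal{V}_n^y$. Thus $\mathcal{W}_n = \{W_n^y : y \in Y\}$ is an open cover of $Y$ for each $n$.

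Next I would apply the Menger property of $Y$ to the sequence $\{\mathcal{W}_n : n \in \omega\}$: there are finite subfamilies $\mathcal{G}_n \subseteq \mathcal{W}_n$ such that $\bigcup_{n \in \omega} \mathcal{G}_n$ covers $Y$. Write $\mathcal{G}_n = \{W_n^{y} : y \in E_n\}$ for a finite set $E_n \subseteq Y$, and set $\mathcal{V}_n = \bigcup_{y \in E_n} \mathcal{V}_n^y$, a finite subfamily of $\mathcal{U}_n$. I claim $\bigcup_{n \in \omega} \mathcal{V}_n$ covers $X$: given $x \in X$, the point $f(x)$ lies in some $W_n^{y}$ with $y \in E_n$, hence $x \in f^{-1}(W_n^y) \subseteq \bigcup \mathcal{V}_n^y \subseteq \bigcup \mathcal{V}_n$. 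This witnesses the Menger property for $X$.

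There is one subtlety worth flagging: the displayed argument only uses one cover at a time per fiber, which is exactly what Menger on the base needs, so no diagonalization of $\omega$ is actually required — the bookkeeping above handles it. The one genuine point to be careful about is that the ``closed map trick'' $f^{-1}(W_n^y) \subseteq \bigcup \mathcal{V}_n^y$ relies on $f$ being closed (to make $W_n^y$ open) together with surjectivity (to get $W_n^y \ni y$, using $f^{-1}(y) \subseteq \bigcup \mathcal{V}_n^y$); compactness of fibers is what lets us take $\mathcal{V}_n^y$ finite in the first place. The main obstacle, such as it is, is simply organizing the indices cleanly so that each $\mathcal{V}_n$ ends up finite; this is routine once one observes that each $E_n$ is finite and each $\mathcal{V}_n^y$ is finite. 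I would then conclude that $X$ is Menger, which proves the lemma.
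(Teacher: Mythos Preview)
Your proof is correct and follows essentially the same approach as the paper: cover each compact fiber by a finite subfamily of $\mathcal{U}_n$, push forward via the closed-map trick $W_n^y = Y \setminus f[X \setminus \bigcup \mathcal{V}_n^y]$ to get open covers of $Y$, apply Menger on $Y$, and pull the resulting finite selections back to $X$. The only cosmetic difference is notation; your remark that surjectivity is needed for $y \in W_n^y$ is slightly off (that inclusion follows directly from $f^{-1}(y) \subseteq \bigcup \mathcal{V}_n^y$), but this does not affect the argument.
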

\begin{proof}
    Suppose $f:X\rightarrow Y$ is a perfect mapping with $Y$ being a Menger space. Let $(\mathcal{U}_n)_{n\in\omega}$ be a sequence of open covers of $X$. For each $n\in\omega$, let us denote $\mathcal{U}_n=\{U_\alpha:\alpha\in I_n\}$. Thus, for each $n\in\omega$ and for each $y\in Y$, there exists a finite subset $F_y^n$ of $I_n$ such that $f^{-1}(y)\subseteq \bigcup_{\alpha\in F_y^n}U_\alpha$. We let, for each $n\in\omega$ and for each $y\in Y$, $V_y^n=Y\setminus f[X\setminus \bigcup_{\alpha\in F_y^n}U_\alpha]$. Note that for each $n\in\omega$ and for each $y\in Y$, $V_y^n$ is an open neighbourhood of $y$. Now, we define for each $n\in\omega$, $\mathcal{W}_n=\{V_y^n:y\in Y\}$. Thus, each $\mathcal{W}_n$ is an open cover of $Y$. Since $Y$ is Menger, for each $n\in\omega$ there is a finite subcollection $\mathcal{G}_n$ of $\mathcal{W}_n$ such that $\bigcup\{\mathcal{G}_n:n\in\omega\}$ is an open cover of $Y$. Denote, for each $n\in\omega$, $\mathcal{G}_n=\{V_{y_i}^n:i\leq k(n)\}$. Hence, we have the following 
    \begin{equation*}
    \begin{split}
        X=f^{-1}[Y] &= f^{-1}\left[\bigcup\{V_{y_i}^n: i\leq k(n), n\in \omega\}\right]\\  
        &=\bigcup\{f^{-1}[V_{y_i}^n]: i \leq k(n), n\in\omega\}\\
        &=\bigcup\{f^{-1}[Y\setminus f[X\setminus\bigcup_{\alpha\in F_{y_i}^n}U_\alpha]]: i \leq k(n), n\in\omega\}\\
        &=\bigcup\{X\setminus f^{-1}[f[X\setminus\bigcup_{\alpha\in F_{y_i}^n}U_\alpha]]: i \leq k(n), n\in\omega\}\\
        &\subseteq\bigcup\{\bigcup_{\alpha\in F_{y_i}^n}U_\alpha: i \leq k(n), n\in\omega\}.
    \end{split}
    \end{equation*}
    Thus, if we define, for each $n\in\omega$, $\mathcal{V}_n=\{U_\alpha:\alpha\in F_{y_i}^n, i\leq k(n)\}$, then $\mathcal{V}_n$ is a finite subcollection of $\mathcal{U}_n$, for each $n\in\omega$, that satisfies that $\bigcup\{\mathcal{V}_n:n\in\omega\}$ is an open cover of $X$. We conclude that $X$ is a star Menger space.
\end{proof}

\begin{proposition}
    If $f:X\rightarrow Y$ is an open perfect mapping and $Y$ is a star Menger space, then $X$ is star Menger.
\end{proposition}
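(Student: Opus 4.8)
The plan is to combine the fact that open perfect maps lift open covers in a strong way with the just-proved Lemma~\ref{Menger preimages}. First I would let $\mathcal{U}$ be an arbitrary open cover of $X$; the goal is to produce a Menger subspace $M\subseteq X$ with $St(M,\mathcal{U})=X$. For each $y\in Y$ the fiber $f^{-1}(y)$ is compact (perfectness), so it is covered by finitely many members of $\mathcal{U}$; call their union $G_y$, an open set containing $f^{-1}(y)$. Exactly as in the proof of Lemma~\ref{Menger preimages}, the set $V_y=Y\setminus f[X\setminus G_y]$ is then an open neighbourhood of $y$ in $Y$ (using that $f$ is closed), and $f^{-1}[V_y]\subseteq G_y$. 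Thus $\mathcal{V}=\{V_y:y\in Y\}$ is an open cover of $Y$.

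Next I would invoke the star Menger property of $Y$ applied to the cover $\mathcal{V}$: there is a Menger subspace $N\subseteq Y$ with $St(N,\mathcal{V})=Y$. The natural candidate for the star kernel in $X$ is $M=f^{-1}[N]$. By Lemma~\ref{Menger preimages}, since $f$ restricted to $f^{-1}[N]\to N$ is again a perfect map and $N$ is Menger, the space $M=f^{-1}[N]$ is Menger. So it only remains to check that $St(M,\mathcal{U})=X$, i.e. that every point $x\in X$ lies in some $U\in\mathcal{U}$ meeting $f^{-1}[N]$.

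Fix $x\in X$ and set $y=f(x)$. Since $St(N,\mathcal{V})=Y$, there is some $V_{y'}\in\mathcal{V}$ with $y\in V_{y'}$ and $V_{y'}\cap N\neq\emptyset$; pick $z\in V_{y'}\cap N$. Now here is where I expect to use openness of $f$: I want to find a single member of $\mathcal{U}$ that simultaneously contains $x$ and meets $f^{-1}(z)\subseteq M$. Because $y\in V_{y'}$ we have $x\in f^{-1}[V_{y'}]\subseteq G_{y'}$, so $x\in U$ for some $U\in\mathcal{U}$ among the finitely many whose union is $G_{y'}$. The subtle point is that this same $U$ must meet $f^{-1}(z)$; this is not automatic from $f^{-1}(z)\subseteq G_{y'}$ alone, since $G_{y'}$ is a union of several sets from $\mathcal{U}$ and $f^{-1}(z)$ might avoid the particular one containing $x$. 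This is the main obstacle, and the way around it is to refine the construction: instead of a single $G_y$, keep track of the individual pieces. Concretely, for each $y$ enumerate the finitely many chosen members of $\mathcal{U}$ covering $f^{-1}(y)$ as $U_1^y,\dots,U_{m(y)}^y$; then for a point $x$ with $f(x)=y\in V_{y'}$ we have $x\in U_i^{y'}$ for some $i$, and we additionally want $U_i^{y'}\cap f^{-1}(z)\neq\emptyset$. Since $f$ is open, $f[U_i^{y'}]$ is open in $Y$ and contains $y$; intersecting these open images over $i$ with $V_{y'}$ gives a smaller neighbourhood of $y$, and replacing $V_{y'}$ throughout by $V_{y'}\cap\bigcap_i f[U_i^{y'}]$ (still an open cover of $Y$, still a neighbourhood of each relevant point) forces: $z\in f[U_i^{y'}]$ for the appropriate $i$, hence $U_i^{y'}$ meets $f^{-1}(z)\subseteq f^{-1}[N]=M$. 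Thus $x\in U_i^{y'}\in\mathcal{U}$ with $U_i^{y'}\cap M\neq\emptyset$, giving $x\in St(M,\mathcal{U})$. Since $x$ was arbitrary, $St(M,\mathcal{U})=X$, and $X$ is star Menger. The only care needed is to set up the refined cover $\mathcal{V}$ before applying star Mengerness of $Y$, so that openness of $f$ is exploited at the right moment; everything else is routine.
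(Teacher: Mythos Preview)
Your argument is correct and follows essentially the same route as the paper: choose a finite $\mathcal{U}_y\subseteq\mathcal{U}$ covering each fiber, use closedness of $f$ to get $V_y$ with $f^{-1}[V_y]\subseteq\bigcup\mathcal{U}_y$, use openness of $f$ to shrink $V_y$ inside $\bigcap_{U\in\mathcal{U}_y}f[U]$, then apply star Mengerness to $\{V_y\}$ and pull back the kernel via Lemma~\ref{Menger preimages}. The only point the paper makes explicit that you leave implicit is that each chosen $U_i^{y}$ must actually meet $f^{-1}(y)$ (discard redundant members), so that $y\in f[U_i^{y}]$ and your refined $V_y\cap\bigcap_i f[U_i^{y}]$ is still a neighbourhood of $y$; once you state this, the two proofs are identical.
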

\begin{proof}
    Let $\mathcal{U}$ be an open cover of $X$. For each $y\in Y$, let $\mathcal{U}_y$ be a finite subcollection of $\mathcal{U}$ such that $f^{-1}(y)\subseteq\bigcup\mathcal{U}_y$ with the property that, for every $U\in\mathcal{U}_y$, $f^{-1}(y)\cap U\neq\emptyset$. Since $f$ is a closed mapping, we can choose an open neighbourhood $V_y$ of $y$ such that $f^{-1}[V_y]\subseteq\bigcup\mathcal{U}_y$, for each $y\in Y$. Moreover, given that $f$ is an open mapping, we can choose each $V_y$ so that $V_y\subseteq\bigcap_{U\in\mathcal{U}_y}f[U]$. The collection $\mathcal{V}=\{V_y:y\in Y\}$ is an open cover of $Y$ with $Y$ being a star Menger space. Therefore, there exists $N\subseteq Y$ Menger such that $St(N,\mathcal{V})=Y$. By Lemma \ref{Menger preimages}, the set $M=f^{-1}[N]$ is a Menger subspace of $X$. Let us show that $St(M,\mathcal{U})=X$. Let $x\in X$. Then, there exists $y\in Y$ such that $f(x)\in V_y$ and $V_y\cap N\neq\emptyset$. Since $x\in f^{-1}[V_y]\subseteq\bigcup\mathcal{U}_y$, there exists $U_0\in\mathcal{U}_y$ such that $x\in U_0$. Moreover, considering that $V_y\subseteq f[U]$ for each $U\in\mathcal{U}_y$, then, in particular, $V_y\subseteq f[U_0]$ and therefore, $f[U_0]\cap N\neq\emptyset$. It follows that $U_0\cap f^{-1}[N]=U_0\cap M \neq\emptyset$. Thus, $x\in St(M,\mathcal{U})$. This shows that $X$ is star Menger.  
\end{proof}

Let us finish this section by mentioning some facts on the Alexandroff duplicate. It is well-known and easy to prove that for the Menger property, we have that a space $X$ is Menger if and only if its Alexandroff duplicate $A(X)$ is Menger. Therefore, we may wonder about the behaviour of the star Menger property on the Alexandroff duplicate. To answer this question, we give the following example:

\begin{example}
    There exists a Tychonoff (pseudocompact) star Menger space $X$ such that its Alexandroff duplicate $A(X)$ is not star Menger.
\end{example}
\begin{proof}
    Let $X=\Psi(\mathcal{A})$ with $\mathcal{A}$ being a mad family on $\omega$. Then, it is obviously that $X$ is a Tychonoff pseudocompact star Menger space. Furthermore, since $\mathcal{A}$ is a closed discrete subset of $X$, then $\mathcal{A}\times\{1\}$ is an uncountable clopen subset of $A(X)$ which consists of isolated points. It follows that $\mathcal{A}\times\{1\}$ cannot be star Menger. Consequently, $A(X)$ is not a star Menger space either, as the star Menger property is inherited by clopen subsets (see Proposition \ref{On clopen sets}). 
\end{proof}

Despite the previous example, we do have the converse. That is, if $A(X)$ is star Menger, then $X$ is star Menger. In fact, something stronger holds as the following result shows; we refer the reader to \cite{SX} for details.

\begin{theorem}
    Let $X$ be a topological space. The following are equivalents:
    \begin{enumerate}
        \item[(a)] $e(X)\leq\omega$;
        \item[(b)] $e(A(X))\leq\omega$;
        \item[(c)] $A(X)$ is star countable;
        \item[(d)] $A(X)$ is star $\sigma$-compact;
        \item[(e)] $A(X)$ is star Menger;
        \item[(f)] $A(X)$ is star Lindel\"{o}f;
        \item[(g)] $A(X)$ is $SL$ (see Section \ref{section on star selection principles theory}).
    \end{enumerate}
\end{theorem}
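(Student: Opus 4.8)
The plan is to prove the single cycle of implications
$(a)\Rightarrow(b)\Rightarrow(c)\Rightarrow(d)\Rightarrow(e)\Rightarrow(f)\Rightarrow(g)\Rightarrow(a)$, which forces all
seven conditions to be equivalent. The middle block $(c)\Rightarrow(d)\Rightarrow(e)\Rightarrow(f)\Rightarrow(g)$ is
essentially free, being a chain of facts already recorded in the paper: a countable subspace is $\sigma$-compact, a
$\sigma$-compact space is Menger, a Menger space is Lindel\"of, and every star Lindel\"of space is $SL$; so star
countable $\Rightarrow$ star $\sigma$-compact $\Rightarrow$ star Menger $\Rightarrow$ star Lindel\"of $\Rightarrow$
$SL$ (compare Figure~\ref{relationships among star-P properties}). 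The implication $(b)\Rightarrow(c)$ is an instance
of the well-known fact that a $T_1$ space $Z$ of countable extent is star countable, applied to $Z=A(X)$: given an
open cover $\mathcal{U}$ of $Z$, build $Y=\{y_\alpha:\alpha<\lambda\}$ by recursion, taking $y_\alpha$ to be any point
of $Z\setminus St(\{y_\beta:\beta<\alpha\},\mathcal{U})$ while that set is non-empty; for cardinality reasons the
recursion terminates, whence $St(Y,\mathcal{U})=Z$, and $Y$ is closed discrete, since for any $V\in\mathcal{U}$ the
least-indexed member $y_\alpha$ of $Y$ in $V$ is its only member of $Y$ (no later $y_\beta$ was permitted inside
$St(y_\alpha,\mathcal{U})\supseteq V$, and $y_\alpha\notin St(y_\beta,\mathcal{U})$ whenever $\beta<\alpha$), which
makes $Y$ discrete and, by $T_1$-ness, closed. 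Thus $|Y|\leq e(Z)\leq\omega$, giving a countable star kernel.

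So the real content sits in the two corners $(a)\Rightarrow(b)$ and $(g)\Rightarrow(a)$, and both hinge on one
structural remark: a point of $X\times\{1\}$ is isolated in $A(X)$, hence never a limit point, so a subset of
$X\times\{1\}$ is closed in $A(X)$ exactly when its projection to $X$ is closed and discrete, while $X\times\{0\}$ is a
closed copy of $X$ inside $A(X)$. For $(a)\Rightarrow(b)$, take a closed discrete $D\subseteq A(X)$ and write
$D_0=D\cap(X\times\{0\})$ and $E=\{x\in X:(x,1)\in D\}$. Since $X\times\{0\}$ is closed in $A(X)$ and homeomorphic to
$X$, $D_0$ is closed discrete in $X$, so $|D_0|\leq e(X)\leq\omega$. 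If some $x\in X$ had $U\cap E$ infinite for every
neighbourhood $U$ of $x$, then every basic neighbourhood of $(x,0)$ would meet $D$ in infinitely many isolated points
of $X\times\{1\}$, which is impossible whether $(x,0)\in D$ (then $D$ is not discrete) or $(x,0)\notin D$ (then $D$ is
not closed). Hence $E$ is locally finite, so closed discrete in $X$, and $|D|=|D_0|+|E|\leq\omega$; therefore
$e(A(X))\leq\omega$.

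For $(g)\Rightarrow(a)$ I argue contrapositively. If $e(X)>\omega$, fix an uncountable closed discrete $D\subseteq X$;
then $D$ has no limit points in $X$, so $D\times\{1\}$ is an uncountable closed discrete set of isolated points of
$A(X)$ and $\mathcal{W}=\{A(X)\setminus(D\times\{1\})\}\cup\{\{(d,1)\}:d\in D\}$ is an open cover. For any countable
$\mathcal{V}\subseteq\mathcal{W}$ one computes
$St(\bigcup\mathcal{V},\mathcal{W})\subseteq(A(X)\setminus(D\times\{1\}))\cup\{(d,1):\{(d,1)\}\in\mathcal{V}\}$,
which omits uncountably many of the points $(d,1)$; hence no countable subfamily of $\mathcal{W}$ stars onto $A(X)$,
i.e.\ $A(X)$ is not $SL$, completing the cycle. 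The main obstacle, such as it is, is purely bookkeeping: keeping
straight when a subset of $A(X)$ is closed discrete, which the structural remark above is meant to settle once and for
all; everything else is routine.
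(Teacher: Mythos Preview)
The paper does not actually supply a proof of this theorem; it states the result and refers the reader to \cite{SX} for details. So there is no in-paper argument to compare yours against.

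Your proof is correct and follows the natural route. The cycle $(a)\Rightarrow(b)\Rightarrow(c)\Rightarrow\cdots\Rightarrow(g)\Rightarrow(a)$ is the expected strategy, and each step is sound. The structural observation that $E\times\{1\}$ is closed in $A(X)$ precisely when $E$ is closed discrete in $X$ is exactly what drives both nontrivial implications, and you verify it honestly via the form of basic neighbourhoods of points $(x,0)$. The recursion in $(b)\Rightarrow(c)$ is the standard extent-implies-star-countable argument for $T_1$ spaces (and $A(X)$ inherits $T_1$ from $X$ under the paper's blanket hypothesis). The contrapositive for $(g)\Rightarrow(a)$ is clean: the only member of $\mathcal{W}$ containing a given $(d,1)$ is the singleton $\{(d,1)\}$ itself, so a countable $\mathcal{V}\subseteq\mathcal{W}$ can star onto at most countably many points of $D\times\{1\}$. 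One cosmetic remark: in $(a)\Rightarrow(b)$ you could shortcut the local-finiteness discussion of $E$ by noting directly that if $x$ is an accumulation point of $E$ in $X$ then $(x,0)$ is a limit point of $D\cap(X\times\{1\})$ in $A(X)$, which already contradicts $D$ being closed discrete; but what you wrote is fine.
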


\section{Equivalences on some classes}\label{section on equivalences on some classes}

In this section, we point out some immediate consequences of some results found in the literature which are related to the star Menger property. Some of these consequences tell us in which classes the star Menger property coincides with some other properties.

Recall that a space $X$ is said to be $\sigma$-paraLindel\"{o}f if every open cover of $X$ admits a $\sigma$-locally countable open refinement. In \cite{Hiremath}, Hiremath obtained that in the class of $\sigma$-paraLindel\"{o}f spaces, the Lindel\"{o}f and star Lindel\"{o}f properties coincide. Thus, we have the following,

\begin{proposition}
    Let $X$ be a $\sigma$-paraLindel\"{o}f space. Then, the following are equivalent:
    \begin{enumerate}
        \item[(a)] $X$ is Lindel\"{of};
        \item[(b)] $X$ is star Menger;
        \item[(c)] $X$ is star Lindel\"{o}f.
    \end{enumerate}
\end{proposition}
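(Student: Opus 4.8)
The plan is to prove the chain of equivalences $(a)\Rightarrow(c)\Rightarrow(b)\Rightarrow(a)$, since two of the three links are essentially free. The implication $(a)\Rightarrow(c)$ is immediate: every Lindel\"of space has the Menger property (take finite subcovers — in fact countable subcovers suffice), so every Lindel\"of space is Menger, hence star Menger, hence in particular star Lindel\"of; alternatively one uses that every space is star-$\mathcal P$ whenever it has $\mathcal P$, combined with basic fact (b) from the preliminaries together with the implication ``star Menger $\Rightarrow$ star Lindel\"of'' already recorded in Figure \ref{relationships among star-P properties}. Strictly speaking I only need $(a)\Rightarrow(b)$ directly here, which is the same one-line argument: a Lindel\"of space is Menger, and a space with property $\mathcal P$ is automatically star-$\mathcal P$, so a Lindel\"of space is star Menger.

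Next, the implication $(b)\Rightarrow(c)$ is again one of the arrows already displayed in Figure \ref{relationships among star-P properties}: star Menger implies star Lindel\"of. I would state it and cite the diagram, or reprove it in one line — given an open cover $\mathcal U$ and a Menger (hence Lindel\"of) subspace $M$ with $St(M,\mathcal U)=X$, a countable subfamily $\mathcal V\subseteq\mathcal U$ covering $M$ satisfies $M\subseteq\bigcup\mathcal V$, whence $St(M,\mathcal U)\subseteq St(\bigcup\mathcal V,\mathcal U)=X$, witnessing $SL$ (indeed star Lindel\"of).

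The substantive step, and the only place where the hypothesis of $\sigma$-paraLindel\"ofness is used, is $(c)\Rightarrow(a)$: a $\sigma$-paraLindel\"of star Lindel\"of space is Lindel\"of. This is exactly Hiremath's theorem from \cite{Hiremath} as quoted in the paragraph preceding the statement, so my plan is simply to invoke it: given a $\sigma$-paraLindel\"of space that is star Lindel\"of, Hiremath's result gives that it is Lindel\"of, and this closes the cycle. The main obstacle, if one insisted on self-containedness, would be reproving Hiremath's argument: one takes an open cover $\mathcal U$, passes to a $\sigma$-locally countable open refinement $\mathcal R=\bigcup_{n\in\omega}\mathcal R_n$ with each $\mathcal R_n$ locally countable, and must extract a countable subcover. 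For each $n$ one uses star Lindel\"ofness relative to a cover built from $\mathcal R_n$ together with local-countability to control the star of a countable set, then amalgamates over $n$; making the bookkeeping precise is the delicate part. Since the paper explicitly cites \cite{Hiremath} for this fact, I would not reproduce that argument and would instead write: ``$(c)\Rightarrow(a)$ follows from Hiremath's theorem \cite{Hiremath} that Lindel\"of and star Lindel\"of coincide for $\sigma$-paraLindel\"of spaces; the remaining implications $(a)\Rightarrow(b)\Rightarrow(c)$ are immediate from Figure \ref{relationships among star-P properties} together with the fact that every Lindel\"of space is Menger.'' That completes the proof.
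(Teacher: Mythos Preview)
Your overall strategy matches the paper's: the only substantive step is $(c)\Rightarrow(a)$, which is precisely Hiremath's theorem \cite{Hiremath}, and the remaining implications are read off from Figure~\ref{relationships among star-P properties}. The paper does not even write out a proof; it simply records the proposition as a consequence of Hiremath's result together with the diagram.

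There is, however, a genuine error in your justification of $(a)\Rightarrow(b)$. You write that ``every Lindel\"of space has the Menger property'' and hence is star Menger. This is false: Lindel\"of does \emph{not} imply Menger. The space $\mathbb{P}$ of irrationals is Lindel\"of (separable metric) but not Menger---a fact the paper itself uses in Example~\ref{SL not SM}. So the route Lindel\"of $\Rightarrow$ Menger $\Rightarrow$ star Menger does not work.

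The fix is easy and is already implicit in the diagram. If $X$ is Lindel\"of and $\mathcal{U}$ is an open cover, choose a countable subcover $\{U_n:n\in\omega\}$ and pick $x_n\in U_n$; then $M=\{x_n:n\in\omega\}$ is countable, hence $\sigma$-compact, hence Menger, and $St(M,\mathcal{U})\supseteq\bigcup_n U_n=X$. In other words, Lindel\"of $\Rightarrow$ star countable $\Rightarrow$ star $\sigma$-compact $\Rightarrow$ star Menger, exactly as in Figure~\ref{relationships among star-P properties}. With this correction your argument is complete and coincides with the paper's.
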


Recall that a development for a space $X$ is a sequence of open covers $(\mathcal{U}_n)_{n\in\omega}$ such that for each $x\in X$, the family $\{St(x,\mathcal{U}_n):n\in\omega\}$ is a local base at $x$. Thus, a Moore space is a regular space with a development. In \cite{AJMTW}, the authors proved that in the class of Moore spaces, separability and the star Lindel\"{o}f property are equivalent. As a consequence, we have,

\begin{proposition}
    If $X$ is a Moore space, then the following are equivalent:
    \begin{enumerate}
        \item[(a)] $X$ is separable;
        \item[(b)] $X$ is star Menger;
        \item[(c)] $X$ is star Lindel\"{o}f.
    \end{enumerate}
\end{proposition}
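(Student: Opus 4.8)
The plan is to prove the chain of equivalences by leveraging the cited result from \cite{AJMTW}, which asserts that in the class of Moore spaces, separability is equivalent to the star Lindel\"{o}f property, together with the implications already recorded in Figure \ref{relationships among star-P properties}. The logical skeleton will be the cycle $(a)\Rightarrow(b)\Rightarrow(c)\Rightarrow(a)$, so that no step requires more than one genuine idea.

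First I would establish $(a)\Rightarrow(b)$. If $X$ is separable, it has a countable dense subset $D$; since $D$ is countable it is Lindel\"{o}f, hence Menger, and by basic fact (a) from the preliminaries any space with a dense subspace having property $\mathcal{P}$ is star-$\mathcal{P}$. Thus $X$ is star Menger. (Alternatively, one cites directly that separable $\Rightarrow$ star Menger, which is part of Figure \ref{relationships among star-P properties}.) Next, $(b)\Rightarrow(c)$ is immediate: the Menger property implies the Lindel\"{o}f property, so by basic fact (b) every star Menger space is star Lindel\"{o}f; this too is already in the diagram. Finally, $(c)\Rightarrow(a)$ is exactly the content of the theorem of \cite{AJMTW} cited in the paragraph preceding the statement: in a Moore space, star Lindel\"{o}f implies separable. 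Concatenating the three implications closes the cycle and gives the equivalence of (a), (b) and (c). It is worth noting that the hypothesis "Moore space" is used only in the last implication; the other two hold for arbitrary spaces.

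There is essentially no obstacle here, since the only nontrivial ingredient is black-boxed as the cited theorem of \cite{AJMTW}. The one point deserving a word of care is making sure the relevant directions of the diagram in Figure \ref{relationships among star-P properties} are the ones actually needed: we use "separable $\to$ star Menger" and "star Menger $\to$ star Lindel\"{o}f", both of which appear there. If one prefers a self-contained argument for $(a)\Rightarrow(b)$, the two-line density argument above suffices and avoids any appeal to the figure. Thus the proof reduces to three short observations, the third being a citation, and can be written in a few sentences.
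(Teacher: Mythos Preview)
Your proposal is correct and matches the paper's approach exactly: the paper simply cites the result of \cite{AJMTW} that separability and star Lindel\"{o}f coincide for Moore spaces, and leaves the trivial implications separable $\Rightarrow$ star Menger $\Rightarrow$ star Lindel\"{o}f implicit. Your cycle $(a)\Rightarrow(b)\Rightarrow(c)\Rightarrow(a)$ with the last step being the citation is precisely what the paper intends.
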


On the other hand, in \cite{AJW}, the authors investigated the question as to when a feebly Lindel\"{o}f space has countable extent. Classes of spaces where this fact occurs is in the class of $GO$ spaces (see \cite{AJW}) and in the class of normal $P$-spaces (see \cite{AJW} and also \cite{AJMTW}). In other words, in the class of $GO$ spaces as well as in the class of normal $P$-spaces, the feebly Lindel\"{o}f property and having countable extent are equivalent. Thus, we obtain the following immediate consequences:

\begin{proposition}\label{GO spaces and normal P-spaces}
    If $X$ is either a $GO$-space or a normal $P$-space, then the following are equivalent:
    \begin{enumerate}
        \item[(a)] $e(X)\leq\omega$;
        \item[(b)] $X$ is star Menger;
        \item[(c)] $X$ is feebly Lindel\"{o}f.
    \end{enumerate}
\end{proposition}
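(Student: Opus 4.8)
The plan is to establish the cycle of equivalences (a)$\Rightarrow$(b)$\Rightarrow$(c)$\Rightarrow$(a), invoking the cited facts about $GO$-spaces and normal $P$-spaces only where genuinely needed. The implications that hold for \emph{every} topological space are already encoded in Figure \ref{relationships among star-P properties}: star Menger implies star Lindel\"of implies star countable extent implies feebly Lindel\"of, so in particular (b)$\Rightarrow$(c). Likewise, a space of countable extent is always star Menger, since if $e(X)\le\omega$ then for any open cover $\mathcal U$ one can pick, for each point, a member of $\mathcal U$ containing it, extract a subcover indexed by a set meeting the resulting (closed discrete) selection in a countable set, and take as the Menger kernel $M$ a countable set hitting each chosen member; a countable set is Menger and $St(M,\mathcal U)=X$. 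Hence (a)$\Rightarrow$(b) holds in full generality as well. So the only place the hypothesis on $X$ enters is the implication (c)$\Rightarrow$(a).

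For that last step I would simply quote the result attributed to \cite{AJW} (and \cite{AJMTW}) in the paragraph immediately preceding the proposition: in the class of $GO$-spaces and in the class of normal $P$-spaces, feebly Lindel\"of is equivalent to having countable extent. Thus if $X$ is a $GO$-space or a normal $P$-space and $X$ is feebly Lindel\"of, then $e(X)\le\omega$, which is (a). This closes the cycle (a)$\Rightarrow$(b)$\Rightarrow$(c)$\Rightarrow$(a), and so all three are equivalent.

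Concretely, the proof would read: ``By Figure \ref{relationships among star-P properties}, (b) implies (c). The implication (a)$\Rightarrow$(b) holds for arbitrary spaces: given an open cover $\mathcal U$, choose for each $x\in X$ some $U_x\in\mathcal U$ with $x\in U_x$; since $e(X)\le\omega$, the set of those $x$ for which $U_x$ is needed can be taken countable, so there is a countable $\mathcal V\subseteq\mathcal U$ with $\bigcup\mathcal V=X$, and any countable dense-in-itself—indeed any countable—$M$ selected one point per member of $\mathcal V$ is Menger and satisfies $St(M,\mathcal U)=X$. Finally, (c)$\Rightarrow$(a) is exactly the quoted theorem of \cite{AJW} (resp.\ \cite{AJW},\cite{AJMTW}) for $GO$-spaces (resp.\ normal $P$-spaces).''

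The main (indeed only) obstacle is making the (a)$\Rightarrow$(b) argument airtight: one must be a little careful that ``$e(X)\le\omega$'' yields a \emph{countable} subcover from the pointwise selection $\{U_x:x\in X\}$. The clean way is to note that if $\{U_x:x\in X\}$ had no countable subcover, one could by transfinite recursion extract an uncountable $\{x_\alpha:\alpha<\omega_1\}$ with $x_\alpha\notin\bigcup_{\beta<\alpha}U_{x_\beta}$, and this set is closed discrete (each $x_\alpha$ is isolated from the rest by $U_{x_\alpha}$ together with the cofinitely many later-chosen sets), contradicting $e(X)\le\omega$. Everything else is either a citation or a reading-off of the diagram, so no heavy lifting is required.
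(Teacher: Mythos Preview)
Your overall strategy---cycle (a)$\Rightarrow$(b)$\Rightarrow$(c)$\Rightarrow$(a), with only (c)$\Rightarrow$(a) using the hypothesis on $X$---is exactly how the paper treats this proposition (it is stated as an immediate consequence of Figure~\ref{relationships among star-P properties} together with the cited result of \cite{AJW}, \cite{AJMTW}). The handling of (b)$\Rightarrow$(c) and (c)$\Rightarrow$(a) is fine.

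However, your explicit argument for (a)$\Rightarrow$(b) has a genuine gap. You are in effect arguing that $e(X)\le\omega$ forces every open cover to have a countable subcover, i.e.\ that countable extent implies Lindel\"of. This is false: $\omega_1$ with the order topology has countable extent but is not Lindel\"of. In your recursion you choose $x_\alpha\notin\bigcup_{\beta<\alpha}U_{x_\beta}$; the resulting set is only \emph{right-separated}, not closed discrete. Indeed $U_{x_\alpha}$ excludes the later $x_\gamma$'s but can freely contain earlier $x_\beta$'s. With $X=\omega_1$, $U_x=[0,x]$ and $x_\alpha=\alpha$, your set is all of $\omega_1$, which is certainly not discrete.

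The correct route is simply to read (a)$\Rightarrow$(b) off Figure~\ref{relationships among star-P properties}: countable extent $\Rightarrow$ star countable $\Rightarrow$ star Menger. If you want an explicit argument, the standard one is to recursively choose $x_\alpha\notin St(\{x_\beta:\beta<\alpha\},\mathcal U)$ (rather than merely outside $\bigcup_{\beta<\alpha}U_{x_\beta}$); then any $U\in\mathcal U$ meets $\{x_\alpha\}$ in at most one point, so the set is closed discrete in a $T_1$ space, hence countable, and by maximality $St(\{x_\alpha\},\mathcal U)=X$. This gives star countable (and a fortiori star Menger), not Lindel\"of.
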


Besides the two classes mentioned above, the authors (\cite{AJW}) also showed that certain feebly Lindel\"{o}f subproducts of $\omega_1^2$ have countable extent. Namely, the product of two stationary sets of $\omega_1$ is feebly Lindel\"{o}f if and only if it has countable extent (if and only if it is normal). Therefore, we also have equivalences like in Proposition \ref{GO spaces and normal P-spaces} for this kind of products. However, for arbitrary subspaces of $\omega_1^2$, it was obtained that having countable extent is equivalent to having the star Lindel\"{of} property (\cite{AJW}). Thus, we have the following,

\begin{proposition}
    Let $X$ be a subspace of $\omega_1^2$. The following are equivalent:
    \begin{enumerate}
        \item[(a)] $e(X)\leq\omega$;
        \item[(b)] $X$ is star Menger;
        \item[(c)] $X$ is star Lindel\"{o}f.
    \end{enumerate}
\end{proposition}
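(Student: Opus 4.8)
The plan is to prove the cycle $(a)\Rightarrow(b)\Rightarrow(c)\Rightarrow(a)$. The implication $(b)\Rightarrow(c)$ is free: Menger implies Lindel\"of, so by basic fact (b) on star-$\mathcal P$ spaces every star Menger space is star Lindel\"of (this arrow also appears in Figure \ref{relationships among star-P properties}). The implication $(c)\Rightarrow(a)$ is exactly the theorem of \cite{AJW} recalled just above the statement, that a subspace of $\omega_1^2$ has countable extent if and only if it is star Lindel\"of. Thus the only thing to prove is $(a)\Rightarrow(b)$, and I would in fact prove the stronger assertion that a subspace $X$ of $\omega_1^2$ with $e(X)\le\omega$ is \emph{star compact} (a fortiori star Menger, since a compact space is Menger and one applies basic fact (b)).

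For this, fix an open cover $\mathcal U$ of $X$ and, for $\gamma<\omega_1$, set $B_\gamma=[0,\gamma]\times[0,\gamma]$; each $B_\gamma$ is a compact clopen subset of $\omega_1^2$, the family $(B_\gamma)_{\gamma<\omega_1}$ is increasing, continuous at limits, and has union $\omega_1^2$, and $\max\colon\omega_1^2\to\omega_1$ is continuous. The crux is the claim: \emph{there is $\gamma<\omega_1$ with $St(X\cap B_\gamma,\mathcal U)=X$}. Granting it, $X\cap B_\gamma$ is a compact (hence Menger) star kernel of $\mathcal U$, and since $\mathcal U$ was arbitrary, $X$ is star compact. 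To prove the claim, assume it fails: for each $\gamma$ pick $p_\gamma\in X\setminus St(X\cap B_\gamma,\mathcal U)$, and note that, since $X\cap B_\gamma\subseteq St(X\cap B_\gamma,\mathcal U)$, the point $p_\gamma$ lies outside $B_\gamma$, i.e.\ $\max(p_\gamma)>\gamma$. Build by recursion an increasing continuous sequence $(\gamma_\xi)_{\xi<\omega_1}$ with $\gamma_0=0$, with $\gamma_{\xi+1}$ chosen strictly above both coordinates of $p_{\gamma_\xi}$, and $\gamma_\lambda=\sup_{\xi<\lambda}\gamma_\xi$ at limits; put $x_\xi=p_{\gamma_\xi}$, so $x_\xi\in A_\xi:=B_{\gamma_{\xi+1}}\setminus B_{\gamma_\xi}$. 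The sets $A_\xi$ are clopen in $\omega_1^2$ and pairwise disjoint, so $A_\xi\cap X$ is a clopen neighbourhood of $x_\xi$ in $X$ containing no other $x_\eta$; hence $D:=\{x_\xi:\xi<\omega_1\}$ is relatively discrete in $X$.

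The point that needs care is that $D$ is also \emph{closed} in $X$, which then contradicts $e(X)\le\omega$ and finishes the proof. Suppose $z\in X$ is a limit of $D$; by first countability of $\omega_1^2$ write $z=\lim_n x_{\xi_n}$ with $(\xi_n)$ strictly increasing, and let $\xi^*=\sup_n\xi_n$, a countable limit ordinal. For large $n$ one has $\gamma_{\xi_n+1}<\gamma_{\xi^*}$, so $x_{\xi_n}\in B_{\gamma_{\xi_n+1}}\subseteq[0,\gamma_{\xi^*})\times[0,\gamma_{\xi^*})$, which is closed; hence $z\in[0,\gamma_{\xi^*})\times[0,\gamma_{\xi^*})$, i.e.\ $\max(z)<\gamma_{\xi^*}$. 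On the other hand $\max(x_{\xi_n})>\gamma_{\xi_n}$ and $\gamma_{\xi_n}\to\gamma_{\xi^*}$, so by continuity of $\max$ and of the chain $(\gamma_\xi)$ we get $\max(z)=\lim_n\max(x_{\xi_n})\ge\gamma_{\xi^*}$ — a contradiction. This is essentially a pressing-down/closing-off argument; the only slightly delicate bits are checking that the annuli $A_\xi$ are genuinely clopen (giving relative discreteness) and the limit computation above (giving closedness), and I expect the latter to be the main obstacle to writing out carefully. Everything else — compactness of the boxes, compact $\Rightarrow$ Menger, and the passage through the implication Menger $\Rightarrow$ Lindel\"of for star-$\mathcal P$ properties — is routine and already available in the excerpt.
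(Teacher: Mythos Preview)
Your cycle $(a)\Rightarrow(b)\Rightarrow(c)\Rightarrow(a)$ is the right shape, and your handling of $(b)\Rightarrow(c)$ and $(c)\Rightarrow(a)$ matches the paper exactly: both are immediate, the latter by the cited result of \cite{AJW}. For $(a)\Rightarrow(b)$, however, the paper does \emph{no} work at all: it simply uses the general implications $e(X)\le\omega\Rightarrow$ star countable $\Rightarrow$ star Menger already recorded in Figure~\ref{relationships among star-P properties}. (The first arrow holds for any space: greedily choose $x_\alpha\in X\setminus St(\{x_\beta:\beta<\alpha\},\mathcal U)$ while possible; the resulting set meets each $U\in\mathcal U$ in at most one point, hence is closed discrete, hence countable, and its star is $X$.) So your direct argument for star compactness is much more than what is needed.

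More importantly, that direct argument has a genuine gap in the closedness step. You assert that $[0,\gamma_{\xi^*})\times[0,\gamma_{\xi^*})$ is closed, but $\gamma_{\xi^*}=\sup_n\gamma_{\xi_n}$ is a limit ordinal, so $[0,\gamma_{\xi^*})$ is \emph{not} closed in $\omega_1$; its closure is $[0,\gamma_{\xi^*}]$. In fact your own second computation gives $\max(z)\ge\gamma_{\xi^*}$, and the correct conclusion from both sides is $\max(z)=\gamma_{\xi^*}$, which is no contradiction. The argument can be repaired, but not with the annuli alone: you must use the defining property of $p_\gamma$. Since $x_\xi=p_{\gamma_\xi}\notin St(X\cap B_{\gamma_\xi},\mathcal U)$, any $U\in\mathcal U$ containing $x_\xi$ misses $X\cap B_{\gamma_\xi}$, and for $\eta<\xi$ one has $x_\eta\in B_{\gamma_{\eta+1}}\subseteq B_{\gamma_\xi}$; hence each $U\in\mathcal U$ contains at most one $x_\xi$, and $D$ is closed discrete outright. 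With this fix your stronger conclusion (star compactness) does go through, but the paper's route via ``countable extent $\Rightarrow$ star countable'' is far shorter.
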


Regarding to countable powers of $\omega_1$, in \cite{AJMTW}, it was obtained that same equivalence holds for subspaces of $\omega_1^\omega$. Namely, if $X$ is a subspace of $\omega_1^\omega$, then $X$ is star Lindel\"{o}f if and only if $X$ has countable extent. Hence,

\begin{proposition}
    Suppose $X\subseteq\omega_1^\omega$. Then, the following are equivalent:
    \begin{enumerate}
        \item[(a)] $e(X)\leq\omega$;
        \item[(b)] $X$ is star Menger;
        \item[(c)] $X$ is star Lindel\"{o}f.
    \end{enumerate}
\end{proposition}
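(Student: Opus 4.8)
The plan is to prove the three conditions equivalent through the cycle $(a)\Rightarrow(b)\Rightarrow(c)\Rightarrow(a)$, two of whose arrows rest on results already available. The arrow $(c)\Rightarrow(a)$ requires nothing new: it is precisely the forward direction of the theorem of \cite{AJMTW} quoted just above, that a subspace of $\omega_1^{\omega}$ is star Lindel\"of if and only if it has countable extent. The arrow $(b)\Rightarrow(c)$ uses only the general implication ``star Menger $\Rightarrow$ star Lindel\"of'' displayed in Figure~\ref{relationships among star-P properties}: given an open cover $\mathcal{U}$ of a star Menger space $X$ and a Menger star kernel $M$ with $St(M,\mathcal{U})=X$, the space $M$ is Lindel\"of, so some countable $\mathcal{V}\subseteq\mathcal{U}$ covers $M$, and then $X=St(M,\mathcal{U})\subseteq St(\bigcup\mathcal{V},\mathcal{U})=X$ witnesses that $X$ is star Lindel\"of.

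The substance is the remaining arrow $(a)\Rightarrow(b)$. Here one cannot merely invoke the reverse direction of the \cite{AJMTW} theorem (which yields star Lindel\"ofness, and star Lindel\"of does not imply star Menger in general); the ambient structure of $\omega_1^{\omega}$ must be used. The cleanest route is to observe that the construction underlying the \cite{AJMTW} result in fact delivers more than a countable subcover: for each open cover $\mathcal{U}$ of an $X\subseteq\omega_1^{\omega}$ with $e(X)\le\omega$ it produces a star kernel that is $\sigma$-compact (indeed a countable union of compact and finite sets). Since $\sigma$-compact spaces are Menger, this shows that $X$ is even star $\sigma$-compact, and hence star Menger by Figure~\ref{relationships among star-P properties}.

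To see why the construction can be arranged to output a $\sigma$-compact star kernel, one traces its pressing-down (Fodor) argument through the $\omega$ coordinates of $\omega_1^{\omega}$: it covers $X$ by countably many relatively closed pieces, each contained in a closed box $B=\prod_{i<k}[0,\alpha_i]\times\prod_{i\ge k}\omega_1$. Each such $B$ is the product of a compact space with $\omega_1^{\omega}$, and $\omega_1^{\omega}$ is sequentially compact (diagonalise, using that each ordinal below $\omega_1$ is compact metrizable, hence sequentially compact); thus $B$ is countably compact. By Fleischman's theorem (\cite{Fleischman}) a countably compact space is star finite, and this lets one absorb the corresponding piece of $X$ into $St(F,\mathcal{U})$ for a finite $F$. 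Assembling these finite sets over the countably many boxes produces a countable — in particular $\sigma$-compact, hence Menger — star kernel $M$ with $St(M,\mathcal{U})=X$; recall also that a countable union of Menger spaces is Menger, as used in Section~\ref{section on the behaviour of the star Menger property}.

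The main obstacle is the structural input: recovering from \cite{AJMTW} — or re-deriving and mildly refining — the statement that the pressing-down argument covers $X$ by countably many relatively closed pieces each sitting inside a box with only finitely many bounded coordinates, while keeping the bookkeeping of the coordinatewise recursion under control. Once that is in hand, the passage from ``star Lindel\"of'' to ``star Menger'' is short, and it is exactly at that step that countable compactness (via Fleischman's theorem) is indispensable; everything else is routine.
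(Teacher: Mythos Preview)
Your arrows $(b)\Rightarrow(c)$ and $(c)\Rightarrow(a)$ match the paper. The divergence is at $(a)\Rightarrow(b)$, where you work far harder than necessary and overlook the shortcut the paper relies on.

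You write that for $(a)\Rightarrow(b)$ ``one cannot merely invoke the reverse direction of the \cite{AJMTW} theorem \dots; the ambient structure of $\omega_1^{\omega}$ must be used.'' In fact the ambient structure is irrelevant here: the implication $(a)\Rightarrow(b)$ holds in every $T_1$ space. This is the arrow ``countable extent $\to$ star countable'' already recorded in Figure~\ref{relationships among star-P properties}, after which star countable $\to$ star $\sigma$-compact $\to$ star Menger finishes the job. (The underlying fact is a one-line maximality argument: given an open cover $\mathcal{U}$, any maximal $D\subseteq X$ meeting each $U\in\mathcal{U}$ in at most one point is closed discrete---hence countable by $(a)$---and satisfies $St(D,\mathcal{U})=X$.) This is precisely why the paper presents the proposition as an immediate corollary of the \cite{AJMTW} equivalence $(a)\Leftrightarrow(c)$: once that is in hand, the intermediate condition $(b)$ is sandwiched by general implications from Figure~\ref{relationships among star-P properties} alone, with no further appeal to $\omega_1^\omega$.

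Your alternative route through the internals of the \cite{AJMTW} pressing-down argument also carries a gap as written. You reach countably many pieces of $X$, each relatively closed in $X$ and contained in a countably compact box $B\subseteq\omega_1^\omega$, and then appeal to Fleischman to extract a finite star kernel for each piece. But Fleischman applies to the countably compact space $B$, not to the piece: a set that is relatively closed in $X$ and lies inside $B$ need not be closed in $B$, hence need not be countably compact, so there is no guarantee of a finite $F\subseteq X$ with the piece contained in $St(F,\mathcal{U})$. A finite $F\subseteq B$ would work for a suitable extension of the cover to $B$, but such an $F$ need not lie in $X$. This may be repairable with finer control over what \cite{AJMTW} actually produces, but the point is moot given the shortcut above.
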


Now, in relation to $C_p$-theory, the following Arkhangel'skii result is well-known,

\begin{theorem}[\cite{Ark}]\label{nw=l}
    Let $X$ be a dyadic compact space and $Y\subseteq C_p(X)$. Then $nw(Y)=l(Y)$.
\end{theorem}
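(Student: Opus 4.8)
The plan is to establish the two inequalities $l(Y)\le nw(Y)$ and $nw(Y)\le l(Y)$ separately. The first is a general fact about Tychonoff spaces: given an open cover, pick for each point a member of the cover containing it and then a network element sandwiched between them; the network elements that actually occur, each paired with one cover member, form a subcover of size $\le nw(Y)$, so $l(Y)\le nw(Y)$. The real content is $nw(Y)\le l(Y)$; write $\kappa=l(Y)$. To prove it I would first reduce to a Cantor cube: since $X$ is dyadic, fix a continuous surjection $\phi\colon\{0,1\}^{\tau}\to X$ with $\tau=w(X)$; as $\phi$ is a closed map between compacta, the dual map $\phi^{\#}\colon C_p(X)\to C_p(\{0,1\}^{\tau})$, $g\mapsto g\circ\phi$, is a homeomorphism of $C_p(X)$ onto the closed subspace of functions that are constant on each fibre of $\phi$. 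Hence $\phi^{\#}$ carries $Y$ homeomorphically onto a subspace of $C_p(\{0,1\}^{\tau})$; since $l$ and $nw$ are topological invariants, we may assume from now on that $X=\{0,1\}^{\tau}$ and $Y\subseteq C_p(\{0,1\}^{\tau})$.

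Next, recall that every $f\in C_p(\{0,1\}^{\tau})$ depends on only countably many coordinates; let $\mathrm{supp}(f)$ denote the (minimal) such set and put $B=\bigcup_{f\in Y}\mathrm{supp}(f)$. For each $\alpha<\tau$ the set $U_{\alpha}=\{f\in C_p(\{0,1\}^{\tau}):\alpha\in\mathrm{supp}(f)\}$ is open, since ``$f$ depends on $\alpha$'' is witnessed by two points differing only in coordinate $\alpha$ at which $f$ takes distinct values -- a restriction on just two of the evaluations of $f$. Thus $\{U_{\alpha}\cap Y:\alpha\in B\}$ is a \emph{point-countable} family of nonempty open subsets of $Y$ (point-countable because each $f$ has countable support). \textbf{The crux is to show that $|B|\le\kappa$.} My plan here is a contradiction argument: assuming $|B|>\kappa$, choose for each $\alpha\in B$ a function $f_{\alpha}\in U_{\alpha}\cap Y$ together with a witnessing pair $x_{\alpha},x_{\alpha}'\in\{0,1\}^{\tau}$ (agreeing off coordinate $\alpha$, with $f_{\alpha}(x_{\alpha})\ne f_{\alpha}(x_{\alpha}')$); since the supports are countable, $\{f_{\alpha}:\alpha\in B\}$ still has cardinality $|B|$. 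One then runs a closing-off recursion of length $|B|$ on the data $(f_{\alpha},\{x_{\alpha},x_{\alpha}'\})$, using the point-countability of $\{U_{\beta}\}$ together with the compactness of $\{0,1\}^{\tau}$ (to pass to limits of chosen points while keeping control of supports), to thin to a subfamily of size $>\kappa$ whose separating evaluations are ``independent'', and to conclude that this subfamily is closed and discrete in $Y$ -- contradicting $e(Y)\le l(Y)=\kappa$.

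Once $|B|\le\kappa$ is in hand the rest is routine. Every $f\in Y$ factors through the projection $\pi_{B}\colon\{0,1\}^{\tau}\to\{0,1\}^{B}$, so $Y$ lies inside $\pi_{B}^{\#}\bigl(C_p(\{0,1\}^{B})\bigr)$; that is, $Y$ is homeomorphic to a subspace of $C_p(\{0,1\}^{B})$. As $\{0,1\}^{B}$ is compact of weight $\max(|B|,\aleph_0)\le\kappa$, we obtain
\[
nw(Y)\le nw\bigl(C_p(\{0,1\}^{B})\bigr)=nw\bigl(\{0,1\}^{B}\bigr)=w\bigl(\{0,1\}^{B}\bigr)\le\kappa,
\]
using $nw(C_p(Z))=nw(Z)$ and that network weight equals weight for compacta. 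Combining with the first paragraph yields $nw(Y)=l(Y)$.

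The step I expect to be the main obstacle is precisely $|B|\le l(Y)$. A point-countable family of nonempty open sets can be arbitrarily large in a general Lindel\"of space (for instance in the one-point Lindel\"ofication of a discrete set), so this step must genuinely exploit that the $U_{\alpha}$ arise from coordinate-dependence on a Cantor cube: the freedom, afforded by compactness and the product structure, to arrange many ``independent'' separating pairs, and hence to extract a large closed discrete subspace of $Y$ whenever $B$ is large. Everything else is an assembly of standard $C_p$-theory facts -- closed embeddings induced by continuous surjections of compacta, continuous real-valued functions on compact products depending on countably many coordinates, and $nw(C_p(Z))=nw(Z)$.
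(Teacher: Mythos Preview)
The paper does not prove this theorem: it is quoted from Arkhangel'skii's monograph as a black box and then applied. So there is no in-paper argument to compare against; I can only comment on your outline on its own merits.

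Your global architecture is the standard one and is sound. The reduction to $X=\{0,1\}^{\tau}$ via $\phi^{\#}$ is correct (a continuous surjection between compacta induces a closed embedding of the $C_p$-spaces), the support map is well defined because continuous real functions on $\{0,1\}^{\tau}$ depend on countably many coordinates, your reason for $U_\alpha$ being open is right, and the final step---once $|B|\le\kappa$, the factorization through $\{0,1\}^{B}$ gives $nw(Y)\le\kappa$---is exactly the intended conclusion.

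The crux is where you say it is, and your sketch there is not yet a proof. A $\Delta$-system thinning of $\{\mathrm{supp}(f_\alpha)\}$ with root $R$ and $\alpha$ in the petal does make $\{f_\alpha\}$ \emph{discrete}: the open set $\{g:g(x_\alpha)\ne g(x_\alpha')\}$ isolates $f_\alpha$ from the others. What is missing is \emph{closedness} in $Y$, and ``closing off using compactness'' is too vague. One way to finish (in the spirit of your plan) is: thin further so that $|f_\alpha(x_\alpha)-f_\alpha(x_\alpha')|\ge\delta$ for a fixed $\delta>0$, uniformize the ``root part'' of the $f_\alpha$ using the separability of $C(\{0,1\}^{R})$ so that a single $u^{\ast}\in\{0,1\}^{R}$ witnesses the $\alpha$-dependence for all remaining $\alpha$, and then, given any candidate limit $g\in Y$ with countable support $T$, take two points $z_0,z_1\in\{0,1\}^{\tau}$ that agree on $R\cup T$ (with $z_i|_R=u^{\ast}$) but differ on every coordinate outside $R\cup T$. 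Then $g(z_0)=g(z_1)$, while $|f_\alpha(z_0)-f_\alpha(z_1)|\ge\delta/2$ for all $\alpha\notin T$; the basic neighbourhood of $g$ determined by $z_0,z_1$ and $\delta/8$ therefore misses all but countably many $f_\alpha$, and $g$ is not an accumulation point. This yields a closed discrete subset of $Y$ of size $>\kappa$, contradicting $e(Y)\le l(Y)=\kappa$. So your plan can be completed, but the uniformization step over the root and the correct choice of the separating pair $z_0,z_1$ (depending on $g$) are the substantive ideas that your write-up does not yet supply.
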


As a consequence of it, we can prove the following equivalences in function spaces.

\begin{proposition}
    Suppose $X$ is a dyadic compact space. The following are equivalent:
    \begin{enumerate}
        \item[(a)] $C_p(X)$ is star countable;
        \item[(b)] $C_p(X)$ is star Menger;
        \item[(c)] $C_p(X)$ is star Lindel\"{o}f.
    \end{enumerate}
\end{proposition}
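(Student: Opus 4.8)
The plan is to establish the cycle $(a)\Rightarrow(b)\Rightarrow(c)\Rightarrow(a)$, of which only $(c)\Rightarrow(a)$ requires an argument. The first two implications are immediate and use nothing about dyadicity: every countable space is $\sigma$-compact, every $\sigma$-compact space is Menger, and every Menger space is Lindel\"of, so by the basic fact that $\mathcal{P}\Rightarrow\mathcal{Q}$ implies star-$\mathcal{P}\Rightarrow$ star-$\mathcal{Q}$ (equivalently, reading them off from Figure \ref{relationships among star-P properties}) we get star countable $\Rightarrow$ star Menger $\Rightarrow$ star Lindel\"of.

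For $(c)\Rightarrow(a)$ I would proceed as follows. Fix an arbitrary open cover $\mathcal{U}$ of $C_p(X)$; the goal is to produce a \emph{countable} $D\subseteq C_p(X)$ with $St(D,\mathcal{U})=C_p(X)$. Since $C_p(X)$ is star Lindel\"of, Definition \ref{star-P space} gives a Lindel\"of subspace $L\subseteq C_p(X)$ with $St(L,\mathcal{U})=C_p(X)$. Because $X$ is dyadic compact and $L\subseteq C_p(X)$, Theorem \ref{nw=l} applies to $L$ and yields $nw(L)=l(L)=\omega$; since a space with a countable network is separable, fix a countable set $D$ dense in $L$. Then $D$ is already a star kernel of $\mathcal{U}$: if $U\in\mathcal{U}$ meets $L$, then $U\cap L$ is a nonempty open subset of $L$ and hence meets $D$, so $\{U\in\mathcal{U}:U\cap L\neq\emptyset\}\subseteq\{U\in\mathcal{U}:U\cap D\neq\emptyset\}$ and therefore $C_p(X)=St(L,\mathcal{U})\subseteq St(D,\mathcal{U})$. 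As $D$ is countable, $C_p(X)$ is star countable.

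I do not expect a serious obstacle: the only real idea is that the star Lindel\"of hypothesis hands us a Lindel\"of kernel, Theorem \ref{nw=l} upgrades it to a separable kernel, and shrinking a kernel to a dense subset can only enlarge its star. In fact the argument is purely formal once Arkhangel'skii's theorem is available: in any space $Z$ in which every Lindel\"of subspace is separable, the star Lindel\"of and star countable properties coincide, and dyadic compactness of $X$ enters only to guarantee this for $Z=C_p(X)$ via $nw(Y)=l(Y)$ on subspaces. The single point to state carefully in the write-up is the elementary monotonicity remark $A\subseteq B\Rightarrow St(A,\mathcal{U})\subseteq St(B,\mathcal{U})$, combined with density of $D$ in $L$, which is exactly what legitimizes replacing $L$ by $D$.
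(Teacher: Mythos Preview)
Your argument is correct and follows exactly the paper's approach: only $(c)\Rightarrow(a)$ needs work, and you obtain it by taking a Lindel\"of kernel $L$, applying Theorem~\ref{nw=l} to get $nw(L)=\omega$, and then passing to a countable dense $D\subseteq L$. One small wording caution for the final paragraph: it is not the monotonicity $A\subseteq B\Rightarrow St(A,\mathcal{U})\subseteq St(B,\mathcal{U})$ that lets you replace $L$ by $D$ (that inclusion points the wrong way), but precisely the density argument you already gave, namely that every $U\in\mathcal{U}$ meeting $L$ must meet $D$.
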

\begin{proof}
    We just need to prove $(c)$ implies $(a)$. Assume $C_p(X)$ is star Lindel\"{o}f and let $\mathcal{U}$ be an open cover of $C_p(X)$. We take a Lindel\"{of} $L\subseteq C_p(X)$ such that $St(L,\mathcal{U})=C_p(X)$. By Theorem \ref{nw=l}, $L$ has a countable network. This implies that $C_p(X)$ is star separable, that is, $C_p(X)$ is star countable.
\end{proof}

\begin{remark}
    In Theorem 1.37 of \cite{AJMTW}, it was obtained that $X$ is metrizable provided $X$ is a dyadic compact space and $C_p(X)$ is star Lindel\"{o}f.
\end{remark}

We finish this section by mentioning a last interesting fact in this topic. Every Tychonoff space can be embedded as a closed $G_\delta$ in a Tychonoff star $\sigma$-compact space (see \cite{AJW}). Therefore, as an immediate consequence of it, same situation happens for the star Menger case. For convenience of the reader, we present the idea to prove this fact.

\begin{corollary}
    If $X$ is a Tychonoff space, then $X$ can be embedded as a closed $G_\delta$ subset in a Tychonoff star Menger space.
\end{corollary}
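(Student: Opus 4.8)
The plan is to obtain the statement as an immediate corollary of the embedding theorem of \cite{AJW} recalled above, once we observe that every star $\sigma$-compact space is star Menger. The latter is elementary: a compact space is trivially Menger (extract a finite subcover of each $\mathcal{U}_n$), a countable union of Menger spaces is Menger, hence every $\sigma$-compact space is Menger, and then by basic fact (b) of Section~\ref{intro} star $\sigma$-compactness implies star Mengerness. Consequently, any Tychonoff star $\sigma$-compact space in which $X$ embeds as a closed $G_\delta$ is already the desired witness, and it only remains to exhibit such a space.

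For the construction, I would fix any compact Hausdorff space $K$ with $X\subseteq K$ (for instance $K=\beta X$, or $K=[0,1]^{w(X)}$) and put
\[
Z=\big(X\times\{0\}\big)\cup\big(K\times(0,1]\big)\subseteq K\times[0,1],
\]
with the subspace topology, so that $x\mapsto\langle x,0\rangle$ embeds $X$ onto $X\times\{0\}$ and $Z$ is Tychonoff, being a subspace of $K\times[0,1]$. Then I would verify three points. First, $X\times\{0\}=Z\cap(K\times\{0\})$ is closed in $Z$. Second, $X\times\{0\}=\bigcap_{n\geq 1}\big(Z\cap(K\times[0,1/n))\big)$ is a $G_\delta$ in $Z$. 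Third, $K\times(0,1]=\bigcup_{n\geq 1}K\times[1/n,1]$ is $\sigma$-compact and dense in $Z$, since every nonempty relatively open subset of $Z$ meets $K\times(0,1]$ (equivalently, each point $\langle x,0\rangle$ is a limit of points $\langle x,t\rangle$ with $t\in(0,1]$). By basic fact (a) of Section~\ref{intro}, a space with a dense $\sigma$-compact subspace is star $\sigma$-compact; hence $Z$ is a Tychonoff star $\sigma$-compact — in particular star Menger — space containing a closed $G_\delta$ copy of $X$.

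There is no real obstacle here: the space $Z$ is just a \emph{prism} over a compactification of $X$, and all three verifications are routine. The only subtlety worth emphasizing is that the density of $K\times(0,1]$ in $Z$, and hence the star $\sigma$-compactness of $Z$, uses only the inclusion $X\subseteq K$ and nothing about the internal structure of $X$; this uniformity is precisely what lets the construction succeed for an arbitrary (possibly highly non-Lindel\"of) Tychonoff space $X$.
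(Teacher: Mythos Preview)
Your proposal is correct and takes essentially the same approach as the paper: both deduce the corollary from the \cite{AJW} embedding result via the implication ``star $\sigma$-compact $\Rightarrow$ star Menger,'' and both exhibit the witness explicitly as a prism over a compactification of $X$. The only cosmetic difference is that you use $K\times[0,1]$ with $K\times(0,1]$ as the dense $\sigma$-compact set, whereas the paper uses $\beta X\times(\omega+1)$ with $\beta X\times\omega$ playing that role; the verifications are otherwise identical.
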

\begin{proof}
    Let $Y=(\beta X\times\omega)\cup(X\times\{\omega\})$ with the topology inherited from $\beta X\times(\omega+1)$. Since $\omega$ is Menger, $\beta X\times\omega$ is a Menger dense subset of $Y$. Thus, $Y$ is a star Menger space. In addition, $X$ is homeomorphic to $X\times\{\omega\}$ where the last set is a closed $G_\delta$ set in $Y$. 
\end{proof}

\section{Further study and some general problems}\label{section on further study and general problems}

In this section, by looking at star kernel satisfying some selection hypotheses, we give some schemes that provide several new classes of topological spaces; some general questions are posed about them. First, we recall some classical well-known selection principles. Given an infinite set $X$, let $\mathscr{A}$ and $\mathscr{B}$ be collections of families of subsets of $X$. In \cite{MS1}, Scheepers introduced the following general forms of classical selection principles:\\
\newline
$\mathbf{S_1(\mathscr{A},\mathscr{B})}$: For any sequence $\{\mathcal{A}_n:n\in\omega\}$ of elements of $\mathscr{A}$ there is a sequence $\{B_n:n\in\omega\}$ such that for each $n\in\omega$, $B_n\in \mathcal{A}_n$ and $\{B_n:n\in\omega\}$ is an element of $\mathscr{B}$.\\
\newline
$\mathbf{S_{fin}(\mathscr{A},\mathscr{B})}$: For any sequence $\{\mathcal{A}_n:n\in\omega\}$ of elements of $\mathscr{A}$ there is a sequence $\{\mathcal{B}_n:n\in\omega\}$ such that for each $n\in\omega$, $\mathcal{B}_n$ is a finite subset of $\mathcal{A}_n$ and $\bigcup\{\mathcal{B}_n:n\in\omega\}$ is an element of $\mathscr{B}$.\\
\newline
$\mathbf{U_{fin}(\mathscr{A},\mathscr{B})}$: For each sequence $\{\mathcal{A}_n:n\in\omega\}$ of elements of $\mathscr{A}$ there is a sequence $\{\mathcal{B}_n:n\in\omega\}$ such that for each $n\in\omega$, $\mathcal{B}_n$ is a finite subset of $\mathcal{A}_n$ and $\{\bigcup \mathcal{B}_n:n\in\omega\}$ is an element of $\mathscr{B}$.\\

There are several classes of open covers for a given space $X$ (see \cite{MS1}, \cite{MS2}):
\begin{enumerate}
    \item[(1)] $\mathcal{O}$ denotes the collection of all open covers of $X$.
    \item[(2)] $\Lambda$ denotes the collection of all large covers of $X$; an open cover $\mathcal{U}$ of $X$ is large if for each $x\in X$, $x$ belongs to infinitely many elements of $\mathcal{U}$.
    \item[(3)] $\Omega$ denotes the collection of all $\omega$-covers of $X$; an open cover $\mathcal{U}$ of $X$ is an $\omega$-cover if each finite subset of $X$ is contained in some element of $\mathcal{U}$ and $X$ is not an element of $\mathcal{U}$.
    \item[(4)] $\Gamma$ denotes the collection of all $\gamma$-covers of $X$; an open cover $\mathcal{U}$ of $X$ is a $\gamma$-cover if $\mathcal{U}$ is infinite and for each $x\in X$, $x$ belongs to all but finitely many elements of $\mathcal{U}$.
\end{enumerate}

It is not difficult to note that $\Gamma\subseteq\Omega\subseteq\Lambda\subseteq\mathcal{O}$. Note that $S_{fin}(\mathcal{O},\mathcal{O})$ defines the Menger property. Other well-known classical selection properties are the Rothberger and Hurewicz properties; $S_1(\mathcal{O},\mathcal{O})$ defines the Rothberger property and $U_{fin}(\mathcal{O},\Gamma)$ defines the Hurewicz property.

The above notation motivates the following classes of star-$\mathcal{P}$ spaces:
\begin{definition}\label{star-SP spaces}
  Let $\mathscr{A},\mathscr{B}\in\{\mathcal{O}, \Lambda, \Omega, \Gamma\}$. We say that a space $X$ is 
  \begin{enumerate}
      \item star-$S_1(\mathscr{A},\mathscr{B})$ if for any open cover $\mathcal{U}$ of the space $X$, there exists a set $Y\subseteq X$ such that it satisfies the selection hypothesis $S_1(\mathscr{A},\mathscr{B})$ and $St(Y,\mathcal{U})=X$.
      \item star-$S_{fin}(\mathscr{A},\mathscr{B})$ if for any open cover $\mathcal{U}$ of the space $X$, there exists a set $Y\subseteq X$ such that it satisfies the selection hypothesis $S_{fin}(\mathscr{A},\mathscr{B})$ and $St(Y,\mathcal{U})=X$.
      \item star-$U_{fin}(\mathscr{A},\mathscr{B})$ if for any open cover $\mathcal{U}$ of the space $X$, there exists a set $Y\subseteq X$ such that it satisfies the selection hypothesis $U_{fin}(\mathscr{A},\mathscr{B})$ and $St(Y,\mathcal{U})=X$.
  \end{enumerate}
\end{definition}

Previous definition is in fact, three schemes that generate several classes of spaces; each of those three cases give us potentially sixteen topological properties\footnote{We refer the reader to \cite{MS1} and \cite{MS2} for a better understanding of selection principles and related comments.}. For instance, the class star-$S_{fin}(\mathcal{O},\mathcal{O})$ defines the star Menger property studied in this work; other classes which its investigation would be interesting are the star Rothberger property, defined by the class star-$S_1(\mathcal{O},\mathcal{O})$ and, the star Hurewicz property, defined by the class star-$U_{fin}(\mathcal{O},\Gamma)$. Moreover, we have the following general problems (when applicable). Let $\mathscr{A}$ and $\mathscr{B}$ range over the set $\{\mathcal{O}, \Lambda, \Omega, \Gamma\}$

\begin{question}
    Do there exist examples for each class of spaces star-$S_{fin}(\mathscr{A},\mathscr{B})$? Same question apply for the classes star-$S_1(\mathscr{A},\mathscr{B})$ and star-$U_{fin}(\mathscr{A},\mathscr{B})$.
\end{question}

\begin{question}
    Is it true that the classes defined from the schemes of the Definition \ref{star-SP spaces} are different each other?
\end{question}

\begin{question}
    Make an study for each property obtained from Definition \ref{star-SP spaces}.
\end{question}

%%%%%%%%%%%%%%%%%%%%%%%%%%%%%%%%%%%%%%
\section*{Acknowledgements}
%%%%%%%%%%%%%%%%%%%%%%%%%%%%%%%%%%%%%%

\small
%\footnotesize
\baselineskip=5pt

%%%%%%%%%%%%%%%%%%%%%%%%%%%%%%%%%%%%%%

\textsc{Departamento de Matem\'aticas, Facultad de Ciencias, UNAM, Circuito Exterior S/N, Ciudad Universitaria, CP 04510, Ciudad de M\'exico, M\'exico.}\par\nopagebreak

\vspace{.1cm}
\textit{Email address}: J. Casas-de la Rosa: \texttt{olimpico.25@hotmail.com}

\vspace{.5cm}

\textsc{Departamento de Matem\'aticas, Facultad de Ciencias, UNAM, Circuito Exterior S/N, Ciudad Universitaria, CP 04510, Ciudad de M\'exico, M\'exico.}\par\nopagebreak

\vspace{.1cm}
\textit{Email address}: \'A. Tamariz-Mascar\'ua: \texttt{atamariz@unam.mx}

\end{document}